\theoremstyle{plain}
\newtheorem{thmx}{Theorem}
\renewcommand{\thethmx}{\Alph{thmx}} 
\newtheorem{thm}{Theorem}[section]  
\newtheorem{lem}[thm]{Lemma}
\newtheorem{claim}[thm]{Claim}
\newtheorem{proposition}[thm]{Proposition}
\newtheorem{cor}[thm]{Corollary}
\newtheorem{conjecture}[thm]{Conjecture}
\theoremstyle{definition}
\newtheorem{dfn}[thm]{Definition}
\theoremstyle{remark}
\newtheorem{rem}[thm]{Remark} 
\newtheorem{example}[thm]{Example}
\numberwithin{equation}{subsection}  
\theoremstyle{plain}
\newlist{thmlist}{enumerate}{1}
\setlist[thmlist]{wide = 0pt, labelwidth = 2em, labelsep*=0em, itemindent = 0pt, leftmargin = \dimexpr\labelwidth + \labelsep\relax, noitemsep,topsep = 1ex, font=\normalfont, label=(\roman*), ref=\thethm.(\roman{thmlisti})}
\newlist{thmenum}{enumerate}{1} % also creates a counter called 'propenumi'
\setlist[thmenum]{wide = 0pt, labelwidth = 2em, labelsep*=0em, itemindent = 0pt, leftmargin = \dimexpr\labelwidth + \labelsep\relax, noitemsep,topsep = 1ex, font=\normalfont, label=(\roman*), ref=\thethmx.(\roman{thmenumi})}%{label=\alph*), ref=\thethmx~(\alph*)}
\newlist{corlist}{enumerate}{1} % also creates a counter called 'propenumi'
\setlist[corlist]{wide = 0pt, labelwidth = 2em, labelsep*=0em, itemindent = 0pt, leftmargin = \dimexpr\labelwidth + \labelsep\relax, noitemsep,topsep = 1ex, font=\normalfont, label=(\roman*), ref=\thecorx.(\roman{corlisti})}%{label=\alph*), ref=\thethmx~(\alph*)}
\crefname{lem}{Lemma}{Lemmas} 
\crefname{conjecture}{Conjecture}{Conjectures}
\crefname{thm}{Theorem}{Theorems}
\crefname{proposition}{Proposition}{Propositions}
\crefname{dfn}{Definition}{Definitions}
\crefname{rem}{Remark}{Remarks}
\crefname{cor}{Corollary}{Corollaries}
\crefname{corx}{Corollary}{Corollaries}
\crefname{problem}{Problem}{Problems}
\crefname{thmx}{Theorem}{Theorems}
\crefname{claim}{Claim}{Claims}
\crefname{assumption}{Assumption}{Assumptions}
\crefname{main}{Main Theorem}{Main Theorems}
\def\ep{\varepsilon}
\newcommand*{\rom}[1]{\expandafter\@slowromancap\romannumeral #1@}
\newcommand{\crefnames}[3]{%
	\@for\next:=#1\do{%
		\expandafter\crefname\expandafter{\next}{#2}{#3}%
	}%
}
\newcommand{\cA}{\mathcal A}
\newcommand{\cO}{\mathcal O}
\newcommand{\bA}{\mathbb{A}}
\newcommand{\bC}{\mathbb{C}}
\newcommand{\bP}{\mathbb{P}}
\newcommand{\bQ}{\mathbb{Q}}
\newcommand{\bZ}{\mathbb{Z}}
\newcommand{\Spalg}{\mathrm{Sp}_{\mathrm{alg}}}
 \title[Fundamental groups of special varieties]{Hyperbolicity and fundamental groups of complex quasi-projective varieties (III): applications} 
\date{\today} 
\author[B. Cadorel]{Beno\^{i}t Cadorel} 
\email{benoit.cadorel@univ-lorraine.fr}
\address{Institut \'Elie Cartan de Lorraine, Universit\'e de Lorraine, F-54000 Nancy,
	France}
\urladdr{http://www.normalesup.org/~bcadorel/} 
\author[Y. Deng]{Ya Deng}
\email{ya.deng@math.cnrs.fr, deng@imj-prg.fr}
\address{CNRS,  
	Institut de Math\'ematiques de Jussieu-Paris Rive Gauche,
	Sorbonne Universit\'e, Campus Pierre et Marie Curie,
	4 place Jussieu, 75252 Paris Cedex 05, France}
\urladdr{https://ydeng.perso.math.cnrs.fr}
\author[K. Yamanoi]{Katsutoshi Yamanoi}
\email{yamanoi@math.sci.osaka-u.ac.jp}
\address{Department of Mathematics, Graduate School of Science, Osaka University, Toyonaka,  Osaka 560-0043, Japan} 
\urladdr{https://sites.google.com/site/yamanoimath/}
\begin{document}

	\begin{abstract} 
This paper is Part~III of a series of three. We begin by introducing the notion of \emph{$h$-special varieties}, which can be seen as varieties “chain-connected by the Zariski closures of entire curves.” We prove that if $X$ is either a special complex quasi-projective variety in the sense of Campana or an $h$-special variety, then for any linear representation $\varrho:\pi_1(X)\to \mathrm{GL}_N(\mathbb{C})$, the image $\varrho(\pi_1(X))$ is virtually nilpotent. We also provide examples showing that this result is sharp, leading to a revised form of Campana’s abelianity conjecture for smooth quasi-projective varieties. In addition, we prove a structure theorem for quasi-projective varieties  with big and semisimple representations of the fundamental groups, thereby addressing a conjecture   by Kollár in 1995. We also construct several examples of quasi-projective varieties that are special and $h$-special, highlighting certain atypical properties of the non-compact case in contrast with the projective setting.
	\end{abstract}  
 
\subjclass{32Q45,    14D07,  20F65}
\keywords{special varieties,  nilpotent group}

%\altkeywords{pseudo Picard hyperbolicité, conjecture de Green-Griffiths-Lang généralisée, variétés spéciales, applications harmoniques vers des immeubles de Bruhat-Tits, variations de structures de Hodge, théorie de Nevanlinna, conjugués de Galois}

\maketitle
  \tableofcontents

	 \section{Introduction} 
 \subsection{Fundamental groups of  special and $h$-special varieties}
In  Campana's birational classification of algebraic varieties \cite{Cam04,Cam11}, he introduced the notion of special varieties as the geometric opposite of varieties of  general type. Roughly speaking, a complex quasi-projective variety is special if none of its birational models admits a dominant morphism onto a positive-dimensional quasi-projective variety whose orbifold base is of  log general type.
 Typical examples of special varieties include rationally connected varieties, abelian varieties, and Calabi–Yau varieties.   We refer the readers to \cref{def:special} for the precise definition of \emph{special}   variaties. %Thus, the class of special varieties brings together objects of very different birational nature, but which share the common feature of being “far from general type.”

%Campana’s philosophy is that every variety admits a canonical fibration, the core, whose base is of general type and whose fibers are special. In this way, special varieties play the role of the “building blocks complementary to general type” in the classification theory of algebraic varieties.

An algebraic variety is called \emph{Brody special} if it contains a Zariski dense entire curve. Campana conjectured \cite{Cam04,Cam11} that a variety is special if and only if it is Brody special. In this paper, we introduce a (weaker) notion of \emph{$h$-special varieties}, which, roughly speaking, can be viewed as varieties that are “chain-connected by Brody special varieties”.  For the definition of $h$-special above, we refer the readers to \cref{defn:20230407}; it includes smooth quasi-projective varieties $X$ such that
\begin{itemize}
	\item $X$ admits Zariski dense entire curves, or 
	\item generic two points of $X$ are connected by the chain of entire curves.
\end{itemize}   

In recent years, Campana’s special varieties have been the subject of intensive study from both geometric and arithmetic perspectives (see \cite{BCJW,BJL24,BJR,JR22}, among others). In this paper, we focus primarily on the properties of the fundamental groups of special and $h$-special varieties. A fundamental conjecture of Campana \cite[Conjecture 13.10.(1)]{Cam11} predicts that a smooth quasi-projective variety $X$ that is either special or Brody special has a virtually abelian fundamental group. When $X$ is projective and special, it is known that all linear quotients of $\pi_1(X)$ are virtually abelian (cf. \cite[Theorem 7.8]{Cam04}). Moreover, in \cite[Theorem 1.1]{Yam10}, the third author established the same conclusion for Brody special smooth projective varieties. It is therefore natural to expect same  results in the quasi-projective case.

The first result of this paper is the construction of a quasi-projective surface that is both special and Brody special,   and whose fundamental group is linear and nilpotent but not virtually abelian (cf. \Cref{example}). This provides a counterexample to Campana’s conjecture in full generality. Motivated by this, we propose the following revised form of Campana’s conjecture, which incorporates our notion of $h$-special varieties.
\begin{conj}\label{conj:revised2}
	Let $X$ be a   special  or $h$-special smooth quasi-projective variety.     Then  $ \pi_1(X) $ is \emph{virtually nilpotent}. 
\end{conj} 
 Our  first main result in this paper  is  as follows.  
\begin{thmx}[=\cref{thm:VN}]\label{main5}
	Let $X$ be a smooth quasi-projective variety which is either \textbf{special} or \textbf{$h$-special}. Let $\varrho: \pi_1(X) \to \mathrm{GL}_N(\mathbb{C})$ be a linear representation of its fundamental group. 
	Then the identity component of the Zariski closure of the image $\varrho(\pi_1(X))$  decomposes as a direct product $U\times T$, where $U$ is a  unipotent algebraic group, and $T$ is an algebraic torus. 
	In particular, the image $\varrho(\pi_1(X))$ is  {virtually nilpotent}.
\end{thmx}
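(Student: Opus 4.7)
The goal is to show that $G^0$, the identity component of the Zariski closure $G := \overline{\varrho(\pi_1(X))}^{\mathrm{Zar}}$, is a \emph{nilpotent} connected linear algebraic group: the classical structure theory of such groups then forces $G^0 = U \times T$ as the direct product of its unipotent radical and its (unique) maximal torus, and the virtual nilpotency of $\varrho(\pi_1(X))$ follows because $\varrho^{-1}(G^0)$ has finite index in $\pi_1(X)$. Replacing $X$ by the corresponding finite étale cover, which preserves both specialness and $h$-specialness, we may assume $\varrho(\pi_1(X)) \subset G^0$ from the outset. Fixing a Levi decomposition $G^0 = \mathrm{R}_u(G^0) \rtimes L$, nilpotency amounts to two separate statements: \textbf{(i)} the reductive Levi factor $L$ is an algebraic torus, and \textbf{(ii)} $L$ acts trivially by conjugation on $\mathrm{R}_u(G^0)$.

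For \textbf{(i)}, composing $\varrho$ with the projection $G^0 \to L \to L/\mathrm{R}(L)$ yields a reductive representation $\varrho_{\mathrm{ss}}$ of $\pi_1(X)$ with Zariski-dense image in the semisimple group $L/\mathrm{R}(L)$; one must show that this semisimple quotient is trivial. I would feed $\varrho_{\mathrm{ss}}$ into the reductive \emph{Shafarevich-type morphism} for quasi-projective fundamental groups developed in the earlier parts of this series, built from tame pure polarized complex variations of Hodge structure (after Mochizuki and Simpson) together with the associated period maps. Nontriviality of $\varrho_{\mathrm{ss}}$ yields a dominant rational map from a log modification of $X$ onto a positive-dimensional base with big log canonical bundle---i.e.\ an orbifold base of log general type. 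In the \emph{special} case this directly contradicts the definition of a special morphism. In the \emph{$h$-special} case, entire curves on $X$ must factor through the fibers of the Shafarevich map by a Nevanlinna / Bloch--Ochiai argument in the spirit of \cite{Yam10}; this is incompatible with the hypothesis that a generic pair of points is connected by a chain of Zariski closures of entire curves, since every such chain would then be collapsed to a point, forcing the Shafarevich base to be a point as well.

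For \textbf{(ii)}, once $L = T$ is a torus, a nontrivial weight of $T$ on the deepest graded piece of the lower central series of $U := \mathrm{R}_u(G^0)$ on which $T$ acts non-trivially produces a character $\chi : T \to \mathbb{G}_m$ together with a weight space $V \subset \mathrm{Lie}(U)$, giving rise to a quotient representation $\pi_1(X) \to T \ltimes V$ whose image has non-nilpotent Zariski closure. Iterating this with (i)---applied to suitable sub- and quotient-representations encoded in the weight decomposition of $\mathrm{Lie}(U)$ under $T$---reduces the statement to the virtual abelianity of reductive representations of $\pi_1(X)$ already secured by (i). I would formulate this as an induction on the nilpotent length of $U$, at each stage extracting a semisimple subquotient ruled out by (i) and thereby forcing all nontrivial $T$-weights on $\mathrm{Lie}(U)$ to vanish.

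The decisive step is \textbf{(i)}. Its projective analog is due to Campana \cite{Cam04} for special varieties and to the third author \cite{Yam10} for Brody special ones; in the present quasi-projective setting, however, one must simultaneously control the parabolic structures at the boundary divisor, replace ordinary VHS by tame pure polarized VHS on the log pair, and establish the relevant value-distribution statements for entire curves into open period domains. The main technical content of Parts~I and~II of the series is precisely designed to supply these ingredients, so the proof of Theorem~A is essentially a synthesis of the non-abelian Hodge-theoretic Shafarevich construction with the orbifold and chain-connection geometry specific to special and $h$-special quasi-projective varieties.
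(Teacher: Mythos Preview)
Your step~\textbf{(i)} is essentially the paper's argument: the composition $\pi_1(X)\to G^0\to G^0/R(G^0)$ is Zariski dense in a semisimple group, and the hyperbolicity results of Part~II (Theorem~\ref{main2}, packaged here as Corollary~\ref{cor:202304071}) force this quotient to be trivial, so $G^0$ is solvable with toric Levi.

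Your step~\textbf{(ii)}, however, has a genuine gap. If $T$ acts nontrivially on $\mathrm{Lie}(U)$ you extract a quotient $\pi_1(X)\to T\ltimes V$ with nontrivial weight; but this group is \emph{solvable}, and its only reductive quotient is $T$ itself, which is abelian. There is no non-abelian semisimple subquotient to feed back into~\textbf{(i)}, so your induction does not close. The obstruction to $G^0=U\times T$ lives entirely inside the solvable world and cannot be detected by the semisimple Shafarevich machinery alone.

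The paper's treatment of~\textbf{(ii)} is completely different and is the real content of Theorem~\ref{thm:202210123}. One first proves that the quasi-Albanese map $X\to A$ is $\pi_1$-exact (Proposition~\ref{pro:202210131}); this uses the special/$h$-special hypothesis in an essential way, via the orbifold-divisor argument of Lemma~\ref{lem:3}. The conjugation action of $T$ on $G'/G''$ then factors through the monodromy of $\pi_1(A)$ on $H_1$ of the fibers, which underlies an admissible variation of mixed Hodge structures. Deligne's theorem forces the radical of this monodromy group to be unipotent; since a torus has no nontrivial map to a unipotent group, $T$ acts trivially on $G'/G''$, and an elementary algebraic lemma (Lemma~\ref{claim:202210121}) then gives $G^0=U\times T$. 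None of this mixed-Hodge-theoretic input appears in your sketch, and it cannot be bypassed by the reduction you propose.
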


Note that the above theorem is sharp, as shown by \Cref{example}.  Its proof is based on   Nevanlinna theory, Deligne's mixed Hodge theory and hyperbolicity results proven in  \cite[Theorem A]{CDY22}.  
%\begin{thmx}[=\cref{thm:202210123}]\label{main:geomety group}
%Let $X$ be an $h$-special or special  smooth quasi-projective variety.
%Let $G$ be a connected, solvable algebraic group defined over $\mathbb C$.
%Assume that there exists a Zariski dense representation $\varphi:\pi_1(X)\to G(\bC)$.
%Then $G$ is nilpotent.
%In particular, $\varphi(\pi_1(X))$ is nilpotent.
%\end{thmx} 

\subsection{A structure theorem}
In \cite[Conjecture 4.18]{Kol95}, Kollár raised the following conjecture on the structure of varieties with big fundamental group.
\begin{conj}[Koll\'ar]\label{conj:Kollar}
	 Let $X$ be a smooth projective variety with big fundamental group such that $0<\kappa(X)<\operatorname{dim} X$. Then $X$ has a finite étale cover $p: X^{\prime} \rightarrow X$ such that $X^{\prime}$ is birational to a smooth family of abelian varieties over a projective variety of general type $Z$ which has big fundamental group. 
\end{conj}
 Our second main result  is  the following structure theorem, inspired by  \Cref{conj:Kollar}.
\begin{thmx}[=\cref{thm:structure,thm:char}]
\label{thm:20230510}
	Let $X$ be a   smooth quasi-projective variety and let $\varrho:\pi_1(X)\to {\rm GL}_N(\bC)$ be a reductive and big representation. Then
	\begin{thmenum}
	\item the logarithmic Kodaira dimension $\bar{\kappa}(X)\geq 0$. Moreover, if $\bar{\kappa}(X)=0$, then   $\pi_1(X)$ is virtually abelian.
	\item There is a proper Zariski closed subset $Z$ of $X$ such that each non-constant morphism $\bA^1\to X$ has image in $Z$.
	\item After replacing $X$ by a finite \'etale cover and a birational modification, there are a semiabelian variety $A$, a smooth quasi-projective variety $V$, and a birational morphism $a:X\to V$ such that we have the following commutative diagram
		\begin{equation*}
			\begin{tikzcd}
				X \arrow[rr,    "a"] \arrow[dr,  "j"] & & V \arrow[ld, "h"]\\
				& J(X)&
			\end{tikzcd}
		\end{equation*}
	where $j$ is the logarithmic Iitaka fibration and $h:V\to J(X)$ is a  locally trivial fibration with fibers isomorphic to $A$.   Moreover, for a   general fiber $F$ of $j$, $a|_{F}:F\to  A$ is proper in codimension one. 
		\item \label{char abelian} If $X$ is   special or $h$-special, then $\pi_1(X)$ is virtually abelian, and after replacing $X$ by a finite \'etale cover, its quasi-Albanese morphism $\alpha:X\to \cA$ is birational.  
 		\end{thmenum} 
\end{thmx}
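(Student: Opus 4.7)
The plan is to dispose of (ii) first: it is essentially a direct consequence of the main hyperbolicity theorem of Part~I \cite{CDY22}, which produces a proper Zariski closed exceptional set containing the image of every non-constant entire curve $\bC\to X$ when $\varrho$ is big and reductive; non-constant morphisms $\bA^1\to X$ are a special case. For the non-negativity $\bar{\kappa}(X)\ge 0$ in (i), I would exploit the positivity of the logarithmic cotangent bundle forced by a big reductive representation. Concretely, the tame harmonic bundle on a log smooth compactification $(\overline X,D)$ associated to $\varrho$ has Higgs spectral variety of generic rank equal to $\dim X$ (this is where bigness enters), which produces enough sections of $\mathrm{Sym}^m\Omega_{\overline X}^1(\log D)$ to cut out a sub-line bundle of $\omega_{\overline X}(D)^{\otimes m}$ with non-negative Iitaka dimension. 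When $\bar{\kappa}(X)=0$, the log Iitaka fibration is trivial, and a Kawamata--Iitaka-type argument combined with bigness forces the quasi-Albanese morphism to be birational onto a semiabelian variety, making $\pi_1(X)$ virtually abelian.

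For (iii), let $j:X\to J(X)$ be the log Iitaka fibration (after a birational modification), and let $F$ be a general fiber, so $\bar{\kappa}(F)=0$. The key intermediate step is to show that, after passing to a finite étale cover, the restricted representation $\varrho|_{\pi_1(F)}$ is again reductive and big; the $\bar\kappa=0$ part of (i) then applies to $F$ and identifies it, up to birational equivalence, with a semiabelian variety $A$. To globalize, one must assemble the family of fiberwise quasi-Albanese varieties into a genuinely locally trivial fibration $h:V\to J(X)$ with fiber $A$. This is achieved by analyzing the monodromy action of $\pi_1(J(X))$ on the quasi-Albanese of fibers: this action is controlled by the reductive image $\varrho(\pi_1(X))$, and after a finite étale base change it becomes trivial, yielding the desired locally trivial structure. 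The birational morphism $a:X\to V$ then commutes with $j$ by construction, and the properness in codimension one of $a|_F$ follows from tracking the behaviour of the log canonical divisor transversally to the fibers.

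For (iv), assume $X$ is special or $h$-special. Then \Cref{main5} gives that $\varrho(\pi_1(X))$ is virtually nilpotent. Combined with reductivity, this forces the identity component of the Zariski closure to be an algebraic torus, since a connected reductive nilpotent algebraic group is a torus. Consequently $\varrho$ factors through $H_1(X;\bZ)$ modulo torsion, hence through the quasi-Albanese morphism $\alpha:X\to\cA$, and bigness of $\varrho$ is transported to $\alpha$, forcing $\alpha$ to be generically finite. Since a special or $h$-special variety admits no dominant morphism onto a positive-dimensional variety of log general type, the Iitaka target $J(X)$ in (iii) must be a point, so $\bar\kappa(X)=0$; the structure theorem (iii) then degenerates to the assertion that $X$ is, after finite étale cover, birational to a semiabelian variety, which must coincide with $\cA$.

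The main obstacle is the globalization step in (iii): passing from the pointwise statement ``a general fiber of $j$ is birational to a semiabelian variety'' to an honest locally trivial fibration $V\to J(X)$. This requires showing that the monodromy action of $\pi_1(J(X))$ on the quasi-Albanese of a general fiber trivializes after a finite étale base change, which in turn hinges on the factorization of $\varrho$ through the Katzarkov--Eyssidieux-type Shafarevich morphism, together with the non-abelian Hodge-theoretic control over harmonic sections on the base developed in the earlier parts of this series.
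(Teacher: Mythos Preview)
Your approach diverges substantially from the paper's, and the key missing ingredient is the factorization lemma \cite[Lemma~7.1]{CDY22} (quoted here as \cref{lem:202305101}): after a finite \'etale cover and birational modification, there is a morphism $g:X\to \cA\times Y$ with $\dim g(X)=\dim X$, where $\cA$ is semi-abelian and $Y$ satisfies $\Spalg(Y)\subsetneq Y$. All of (i)--(iii) are then read off from this single map. For~(i), a general fiber $Z$ of the second projection $p:X\to Y$ has $\dim Z=\dim\alpha(Z)$ in $\cA$, hence $\bar\kappa(Z)\ge 0$, and Fujino's log subadditivity gives $\bar\kappa(X)\ge\bar\kappa(Y)+\bar\kappa(Z)\ge 0$; if equality holds then $\dim Y=0$ and \cref{lem:abelian pi} applies. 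Your Higgs spectral variety argument is a different route whose endgame (extracting a sub-line bundle of $\omega_{\overline X}(D)^{\otimes m}$) is not fleshed out. For~(ii), you invoke pseudo-Brody hyperbolicity from \cref{main2}, but that theorem requires a Zariski dense representation into a \emph{semisimple} group, which a merely reductive big $\varrho$ need not provide; the paper instead argues directly that any $\bA^1\to X$ not in $E\cup p^{-1}(\Spalg(Y))$ must have constant projections to $\cA$ (no $\bA^1$ in a semi-abelian variety) and to $Y$ (by definition of $\Spalg$), hence be constant.

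For~(iii) your proposed mechanism---restrict $\varrho$ to fibers of the Iitaka fibration, show it remains big, then trivialize monodromy on fiberwise quasi-Albanese varieties---is precisely what the paper \emph{avoids}. Bigness of $\varrho|_{\pi_1(F)}$ is not obvious, and your monodromy trivialization is only a hope. The paper's \cref{thm:202305101} instead uses that $p(X_t)$ is a point for very general $t\in J(X)$ (Claim~\ref{claim:20230518}, via $\Spalg(Y)\subsetneq Y$ and log subadditivity), so $\alpha|_{X_t}$ is generically finite onto a translate of a semi-abelian subvariety $A_t\subset\cA$; countability of such subvarieties forces $A_t\equiv B$, and after an explicit isogeny (\cref{lem:finite cover}) one sets $V:=J(X)\times_T W$ with $T=W/C$. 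No Shafarevich-type morphism or harmonic section analysis is needed. For~(iv), your first half (Theorem~\ref{main5} plus reductivity forces a torus, hence $\varrho$ factors through $\pi_1(\cA)$) matches the paper, but the detour through~(iii) to conclude $J(X)$ is a point is both unnecessary and unjustified---nothing guarantees $J(X)$ is of log general type in the required orbifold sense. The paper finishes directly: since $\varrho$ factors through $\alpha_*$, it kills $\pi_1$ of fibers of $\alpha$; bigness then forces general fibers to be points, and since $\alpha$ is already dominant with connected fibers (\cref{prop:factor}, using specialness), $\alpha$ is birational.
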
 
When $X$ is projective, the third author proved \cref{char abelian} in \cite{Yam10} without assuming that the representation $\varrho$ is reductive. However, it is worth noting that when $X$ is only quasi-projective,   \cref{char abelian} might fail if $\varrho$ is not reductive (cf. \cref{rem:sharp abelian}).

This paper constitutes the final part of the long preprint \cite{CDY22original} on arXiv, which has been divided into three parts for journal submission. It corresponds to Sections 10–12 of \cite{CDY22original}. The focus here is mainly on applications of the hyperbolicity results for quasi-projective varieties established in \cite{CDY22}.

\subsection*{Convention and notation.}In this paper, we use the following conventions and notations:
\begin{itemize}[noitemsep]
	\item Quasi-projective varieties and their closed subvarieties are usually assumed to be irreducible unless otherwise stated, while Zariski closed subsets might be reducible.
	\item Fundamental groups are always referred to as topological fundamental groups.
	\item If $X$ is a complex space, its normalization is denoted by $X^{\mathrm{norm}}$.
%	\item All algebraic groups are assumed to be linear.
		%\item All non-Archimedean local fields are assumed to have  characteristic zero.
	\item An \emph{algebraic fiber space} is a dominant morphism between quasi-projective normal varieties whose general fibers are connected, but not necessarily proper or surjective.
	\item A birational map $f:X\to Y$ between quasi-projective normal varieties is \emph{proper in codimension one} if there is an open set $Y^\circ\subset Y$ with $Y\backslash Y^\circ$ of codimension at least two such that $f$ is proper over $Y^\circ$.
\end{itemize}
 \subsection*{Acknowledgment.}  
 We would like to thank Michel Brion, Patrick Brosnan, Yohan Brunebarbe,  Fr\'ed\'eric Campana, Philippe Eyssidieux, Hisashi Kasuya, Chikako Mese, Gopal Prasad, Guy Rousseau, and Carlos Simpson for many helpful discussions.  
We are especially grateful to Ariyan Javanpeykar for his valuable comments on the paper.  
B.C. and Y.D. acknowledge support from the ANR grant Karmapolis (ANR-21-CE40-0010).  
K.Y. acknowledges support from JSPS Grant-in-Aid for Scientific Research (C) 22K03286.

\section{Technical preliminaries}\label{sec:pre}
We begin by recalling some technical preliminaries, for which details can be found in \cite{NW13,CDY22,CDY25}.
\subsection{Semi-abelian varieties and quasi-Albanese morphisms.} In this subsection we collect some results on  semi-abelian varieties.  

 A commutative complex algebraic  group $\cA$ is called a \emph{semi-abelian variety} if there is a short exact sequence of complex algebraic groups  
$$
1 \rightarrow H \rightarrow \cA \rightarrow \cA_0 \rightarrow 1
$$
where $\cA_0$ is an abelian variety and $H \cong (\bC^*)^\ell$.

We will need the following results in \cite[Propositions 5.6.21 \& 5.6.22]{NW13}).  
\begin{proposition} \label{prop:Koddimabb}
	Let \(\cA\) be a semi-abelian variety. 
	\begin{enumerate}[label=(\alph*)]
		\item Let \(X \subset \cA\) be a closed subvariety. Then \(\overline{\kappa}(X) \geq 0\) with equality if and only if \(X\) is a translate of a semi-abelian subvariety.
		\item Let \(Z \subset \cA\) be a Zariski closed subset. Then \(\overline{\kappa}(\cA - Z) \geq 0\) with equality if and only if \(Z\) has no component of codimension \(1\).  \qed
	\end{enumerate}
\end{proposition}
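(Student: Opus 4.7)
The plan is to exploit the logarithmic triviality of a smooth equivariant compactification of $\mathcal{A}$. Choose $\overline{\mathcal{A}}$ smooth so that $D := \overline{\mathcal{A}} \setminus \mathcal{A}$ is a simple normal crossings divisor and the translation action of $\mathcal{A}$ on itself extends to $\overline{\mathcal{A}}$; invariant logarithmic $1$-forms then trivialise $\Omega^1_{\overline{\mathcal{A}}}(\log D) \simeq \mathcal{O}_{\overline{\mathcal{A}}}^{\oplus g}$, where $g = \dim \mathcal{A}$, and in particular $K_{\overline{\mathcal{A}}} + D \sim 0$. Both statements will be formal consequences of this together with standard log Iitaka yoga.

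For (a), set $n = \dim X$ and take a log resolution $\mu\colon \tilde{X} \to \overline{X}$ of the closure $\overline{X} \subset \overline{\mathcal{A}}$, with SNC boundary $E = \mu^{-1}(\overline{X} \setminus X)_{\mathrm{red}}$. Logarithmic restriction gives a morphism
$$
\mathcal{O}_{\tilde{X}}^{\oplus g} \simeq \mu^{*}\Omega^1_{\overline{\mathcal{A}}}(\log D)\big|_{\tilde{X}} \longrightarrow \Omega^1_{\tilde{X}}(\log E),
$$
which is generically surjective because $X \hookrightarrow \mathcal{A}$ is an immersion on a dense open. Taking the top exterior power produces a non-zero section of $K_{\tilde{X}} + E$, whence $\overline{\kappa}(X) \geq 0$. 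The equality case is the delicate point: if $\overline{\kappa}(X) = 0$, one invokes the semi-abelian extension of Kawamata's subvariety theorem due to Noguchi--Winkelmann, namely that a closed subvariety of a semi-abelian variety with $\overline{\kappa} = 0$ is a translate of a semi-abelian subvariety. The argument analyses the translation stabiliser $\mathrm{Stab}_{\mathcal{A}}(X) := \{a \in \mathcal{A} : a + X = X\}$, shows via the Ueno-type fibration that $X / \mathrm{Stab}_{\mathcal{A}}(X)$ embeds into the quotient semi-abelian variety, and concludes by a log-general-type dichotomy applied to that quotient that $X$ coincides with a translate of its stabiliser.

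For (b), decompose $Z = Z_1 \cup Z_{\geq 2}$ into its codimension-one and higher codimension parts; removing a codimension $\geq 2$ closed subset from a smooth variety does not change the logarithmic Kodaira dimension, so $\overline{\kappa}(\mathcal{A} \setminus Z) = \overline{\kappa}(\mathcal{A} \setminus Z_1)$. Let $\overline{Z}_1 \subset \overline{\mathcal{A}}$ be the closure, and pick a log resolution $\pi\colon Y \to \overline{\mathcal{A}}$ so that $E_Y := \pi^{-1}(D \cup \overline{Z}_1)_{\mathrm{red}}$ is SNC. The identity $K_{\overline{\mathcal{A}}} + D \sim 0$ yields
$$
K_Y + E_Y \sim \pi^{*} \overline{Z}_1 + F, \qquad F \geq 0 \text{ and $\pi$-exceptional},
$$
so $\overline{\kappa}(\mathcal{A} \setminus Z) = \kappa(\overline{\mathcal{A}}, \overline{Z}_1) \geq 0$. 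If $Z_1 = \emptyset$ the right hand side equals $0$; if $Z_1 \neq \emptyset$, one shows $\kappa(\overline{\mathcal{A}}, \overline{Z}_1) \geq 1$ by moving $\overline{Z}_1$ under the extended $\mathcal{A}$-action, producing a positive-dimensional algebraic family of effective divisors in suitable multiples of $|\overline{Z}_1|$.

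The main obstacle is the equality statement in (a): the inequality $\overline{\kappa}(X) \geq 0$ is a direct consequence of log triviality, but the characterisation of the case $\overline{\kappa}(X) = 0$ rests on the semi-abelian subvariety structure theorem of Noguchi--Winkelmann, which is the substantive non-formal input.
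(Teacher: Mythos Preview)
The paper does not give its own proof of this proposition; it is quoted from \cite[Propositions~5.6.21 \& 5.6.22]{NW13} and marked with a \qed. Your outline is essentially the standard argument behind those results and is sound, including your correct identification of the equality case in (a) as the one substantive, non-formal input.

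One minor caveat in (b): the displayed identity $K_Y + E_Y \sim \pi^{*}\overline{Z}_1 + F$ with $F$ \emph{effective} and exceptional tacitly assumes that the pair $(\overline{\mathcal{A}}, D + \overline{Z}_1)$ is log canonical, which can fail when $\overline{Z}_1$ is singular. The inequality $\overline{\kappa}(\mathcal{A}\setminus Z)\geq 0$ is more safely deduced directly from $\overline{\kappa}(\mathcal{A})=0$ via the obvious inclusion of log pluricanonical forms. For the step $\overline{\kappa}\geq 1$ when $Z_1\neq\emptyset$, your translation idea produces divisors that are a priori only \emph{algebraically} equivalent to $\overline{Z}_1$; to land in a fixed linear system one uses the theorem of the square (so that $t_a^{*}\overline{Z}_1 + t_{-a}^{*}\overline{Z}_1 \in |2\overline{Z}_1|$ moves as $a$ varies outside $\mathrm{St}(Z_1)$), or equivalently one quotients by $\mathrm{St}(Z_1)^{\circ}$ to reduce to the finite-stabiliser case.
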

 
 \subsection{Some factorisation results}\label{sec:fac}
Throughout this paper an \emph{algebraic fiber space} $f:X\to Y$ is a dominant (not necessarily proper) morphism $f$ between quasi-projective normal varieties $X$ and $Y$ such that   general fibers of $f$ are connected.  
\begin{lem}[\protecting{\cite[Lemma 2.1]{CDY22}}]\label{lem:Stein}
	Let $f:X\to Y$ be a morphism between smooth quasi-projective varieties. Then $f$ factors through   morphisms $\alpha:X\to S$ and $\beta:S\to Y$ such that
	\begin{enumerate}[label={\rm (\alph*)}]
		\item $S$ is a quasi-projective normal variety;
		\item   $\alpha$ is an algebraic fiber space;
		\item $\beta$ is a finite morphism.
	\end{enumerate}
	Such a factorisation is unique. \qed
\end{lem}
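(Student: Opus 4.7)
My plan is to construct $S$ as the finite cover of $Y$ corresponding to the algebraic closure $L$ of $K(Y)$ inside $K(X)$. Since $f$ is a morphism of finite type between varieties, $K(X)/K(Y)$ is finitely generated, so $L/K(Y)$ is a finite (and, since $\operatorname{char}=0$, separable) extension. I would then define $S$ to be the normalization of $Y$ in $L$, that is, affine-locally the integral closure of $\mathscr{O}_Y$ in $L$. This immediately yields $\beta\colon S\to Y$ finite, $S$ normal, and $S$ quasi-projective since $Y$ is and finite morphisms preserve quasi-projectivity (pulling back an ample line bundle).

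Next I would construct $\alpha\colon X\to S$ over $Y$ locally. Given $\operatorname{Spec} A\subset X$ mapping to $\operatorname{Spec} R\subset Y$ with preimage $\operatorname{Spec} R'\subset S$, one has $R'\subset L\subset K(X)=\operatorname{Frac}(A)$, each element of $R'$ is integral over $R$, hence integral over $A$ via $R\to A$. Since $X$ is smooth, $A$ is integrally closed in $K(X)$, so $R'\hookrightarrow A$; these maps glue to a morphism $\alpha\colon X\to S$ over $Y$. Dominance of $\alpha$ is clear, because the generic point of $X$ maps to the generic point of $S$. Connectedness of general fibers follows from the fact that $L$ is, by construction, algebraically closed in $K(X)$: a standard argument shows the geometric generic fiber of $\alpha$ is then integral, so general fibers are irreducible.

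For uniqueness, suppose $f=\beta''\circ\alpha''$ is another such factorization through $S''$. Finiteness of $\beta''$ forces $K(S'')/K(Y)$ to be finite algebraic, hence $K(S'')\subset L$. Conversely, the connectedness of general fibers of $\alpha''$ forces $K(S'')$ to be algebraically closed in $K(X)$, for otherwise normalizing $S''$ in a larger algebraic subextension of $K(X)$ would yield a nontrivial finite cover $T\to S''$ dominated by $X$, and then general fibers of $\alpha''$ would split according to fibers of $T\to S''$, contradicting connectedness. Hence $K(S'')=L$, and since $S''$ and $S$ are both normal and finite over $Y$ with the same function field, they are canonically isomorphic over $Y$.

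The main technical point I expect to be the only real obstacle is the passage from the \emph{rational} map $X\dashrightarrow S$ (immediate from $L\hookrightarrow K(X)$) to an \emph{actual} morphism. This is precisely what smoothness (normality) of $X$ buys us, via the integral-closure argument above; without normality of $X$, one would only get a morphism on the normalization of $X$, which would already be enough for most applications but not for the statement as written.
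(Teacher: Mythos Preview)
The paper does not give its own proof of this lemma: it is stated with a terminal \qed and attributed to \cite[Lemma 2.1]{CDY22}. So there is nothing in the present paper to compare your argument against.

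That said, your construction is the standard one and is correct. Taking $S$ to be the normalization of $Y$ in the algebraic closure $L$ of $K(Y)$ inside $K(X)$, and using smoothness (hence normality) of $X$ to promote the rational map $X\dashrightarrow S$ to a morphism, is exactly how one proves this quasi-Stein factorisation. Your verification that $\alpha$ has connected general fibers via geometric integrality of the generic fiber (using that $L$ is algebraically closed in $K(X)$, together with constructibility of geometric integrality in families) is the right argument in the absence of properness. The uniqueness argument is also sound: the key step, that connectedness of general fibers of $\alpha''$ forces $K(S'')$ to be algebraically closed in $K(X)$, is justified by the factorisation $X\to T\to S''$ through the normalization $T$ of $S''$ in the relative algebraic closure, again using normality of $X$; if $[K(T):K(S'')]>1$ then general fibers of $\alpha''$ split into the nonempty fibers over the distinct preimages in $T$. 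One small addition you might make explicit: once $S''\cong S$ over $Y$, the equality $\alpha''=\alpha$ follows because both induce the same map on function fields and $X$ is integral with $S$ separated.
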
 
The previous factorisation will be called \emph{quasi-Stein factorisation} in this paper.  
\begin{proposition}[\protecting{\cite[Proposition 2.5]{CDY22}}]\label{lem:kollar}
	Let $X$ be a quasi-projective normal variety.  Let $\varrho:\pi_1(X)\to G(K)$ be a representation, where $G$ is a linear algebraic group defined over a field $K$ of zero characteristic.  Then there is a diagram
	\[
	\begin{tikzcd}
		\widetilde{X} \arrow[r, "\mu"] \arrow[d, "f"] & \widehat{X} \arrow[r, "\nu"] & X\\
		Y                                             &  &
	\end{tikzcd}
	\]
	where \(Y\) and \(\widetilde{X}\) are smooth quasi-projective varieties, and
	\begin{enumerate}[label=(\alph*)]
		\item $\nu:\widehat{X}\to X$ is a finite étale cover;
		\item \(\mu : \widetilde{X} \to \widehat{X}\) is a birational proper morphism;
		\item $f : \widetilde{X} \to Y$ is a dominant morphism with connected general fibers;
	\end{enumerate}
	such that there exists a big representation \(\tau : \pi_{1}(Y) \to G(K)\) with  $f^*\tau=(\nu\circ \mu)^*\varrho$.   \qed \end{proposition}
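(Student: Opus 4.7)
The plan is to construct $Y$ as the base of a Shafarevich-type quotient associated to $\varrho$, and then to make $\varrho$ descend to the base after a finite \'etale cover.

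Let $\mathcal{F}$ denote the collection of positive-dimensional closed irreducible subvarieties $Z \subset X$ such that $\varrho(\mathrm{Im}(\pi_1(Z^{\mathrm{norm}}) \to \pi_1(X)))$ is finite. Countability of $\mathcal{F}$ follows from the fact that the Hilbert scheme of $X$ has countably many irreducible components and that the finite-image condition is constructible on the universal family. Following the Koll\'ar--Campana construction of Shafarevich maps adapted to a linear representation, there exists a dominant rational map $\mathrm{sh}_\varrho : X \dashrightarrow S$ to a normal quasi-projective variety whose very general fibers are the maximal elements of $\mathcal{F}$ through a very general point. Resolving the indeterminacies of $\mathrm{sh}_\varrho$ by a proper birational morphism $\mu_0 : \widetilde{X}_0 \to X$ from a smooth variety and then applying the quasi-Stein factorisation \cref{lem:Stein}, I obtain an algebraic fiber space $f_0 : \widetilde{X}_0 \to S$ extending $\mathrm{sh}_\varrho \circ \mu_0$.

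The central step is to descend $(\mu_0)^* \varrho$ to a representation of $\pi_1(S)$ after a finite \'etale cover. For a general fiber $F$ of $f_0$, the image $\Gamma_F := \varrho(\mathrm{Im}(\pi_1(F) \to \pi_1(\widetilde{X}_0)))$ is finite. Since $\mathcal{F}$ is countable and the Zariski closure of $\varrho(\pi_1(X))$ is an algebraic subgroup of $G$, Selberg's lemma provides a finite-index normal subgroup $N \triangleleft \pi_1(X)$ whose image under $\varrho$ meets each of the $\Gamma_F$ (for $F$ in a countable dense set of general fibers) only in the identity. Let $\nu : \widehat{X} \to X$ be the associated \'etale cover and let $\widetilde{X}$ be a smooth birational model of the main component of $\widetilde{X}_0 \times_X \widehat{X}$; applying \cref{lem:Stein} once more to the induced morphism $\widetilde{X} \to S$ yields the algebraic fiber space $f : \widetilde{X} \to Y$ with $\widetilde{X} \xrightarrow{\mu} \widehat{X}$ the projection. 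By construction, the image of a general fiber of $f$ in $\pi_1(\widehat{X})$ is contained in $\ker((\nu)^*\varrho)$, so the homotopy exact sequence for $f$ (in its standard form: the image of $\pi_1(\text{fiber})$ is normal and the quotient surjects onto $\pi_1(Y)$ with finite kernel absorbed into $N$ after a further enlargement) defines a representation $\tau : \pi_1(Y) \to G(K)$ satisfying $f^* \tau = (\nu \circ \mu)^* \varrho$.

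Bigness of $\tau$ is automatic from the construction: if some positive-dimensional $W \subset Y$ had $\tau(\mathrm{Im}(\pi_1(W^{\mathrm{norm}}) \to \pi_1(Y)))$ finite, then an irreducible component of $(\nu \circ \mu)(f^{-1}(W))$ would be an element of $\mathcal{F}$ strictly containing a general fiber of $\mathrm{sh}_\varrho$, contradicting the maximality built into the Shafarevich map. The main obstacle is the simultaneous descent carried out in the third paragraph: the fiberwise finite images $\Gamma_F$ must be trivialised by a \emph{single} finite-index subgroup of $\pi_1(X)$, which is exactly why the statement requires both the birational modification $\mu$ and the finite \'etale cover $\nu$. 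Handling a countable family of fibers uniformly via Selberg's lemma, together with the homotopy exact sequence for a non-proper algebraic fiber space, is the technical heart of the argument.
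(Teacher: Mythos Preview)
The paper does not prove this proposition; it is quoted from \cite[Proposition~2.5]{CDY22} and marked with a \qed. Your overall strategy---build the $\varrho$-Shafarevich map, apply Selberg's lemma to make the image torsion-free, then descend---is indeed the standard route and is almost certainly what the cited reference does.

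That said, two points in your write-up are inaccurate, and one of them is a genuine gap. First, the family $\mathcal{F}$ is not countable (fibres of $\mathrm{sh}_\varrho$ already form an uncountable family); fortunately you do not need countability, since Selberg's lemma gives a single finite-index subgroup $N$ with $\varrho(N)$ torsion-free, and then $\varrho(N)\cap\Gamma_F=\{1\}$ for \emph{every} finite $\Gamma_F$ at once. Second, and more seriously, your descent step is not justified: for a non-proper algebraic fibre space $f:\widetilde{X}\to Y$ the sequence $\pi_1(F)\to\pi_1(\widetilde{X})\to\pi_1(Y)\to 1$ is \emph{not} exact in the middle in general, and the cokernel of $\mathrm{Im}(\pi_1(F))\hookrightarrow\ker f_*$ is not finite (this is exactly the phenomenon the present paper studies in \S5.3, where $\pi_1$-exactness is shown to fail unless the orbifold base divisor $\Delta(f)$ vanishes). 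So your parenthetical ``with finite kernel absorbed into $N$'' is incorrect as stated.

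The fix uses the torsion-freeness you already have. Restrict to a Zariski open $Y^\circ\subset Y$ over which $f$ is a topological fibre bundle; there the sequence is exact, so $(\nu\circ\mu)^*\varrho$ descends to $\tau^\circ:\pi_1(Y^\circ)\to G(K)$. The kernel of $\pi_1(Y^\circ)\to\pi_1(Y)$ is normally generated by small loops $\delta_D$ around the components $D$ of $Y\setminus Y^\circ$. If some divisorial component of $f^{-1}(D)$ dominates $D$ with multiplicity $m$, then a small loop around it is trivial in $\pi_1(\widetilde{X})$ and maps to $\delta_D^{\,m}$, so $\tau^\circ(\delta_D)^m=1$; since $\tau^\circ$ has torsion-free image, $\tau^\circ(\delta_D)=1$. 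The remaining case (no divisorial component of $f^{-1}(D)$ dominates $D$) is handled by shrinking $Y$: remove such $D$ from $Y$ and correspondingly shrink $\widetilde{X}$, which does not change $\pi_1(\widetilde{X})$ once one arranges the preimages to have codimension $\ge 2$ after a further birational modification. This is the step that genuinely requires both $\mu$ and $\nu$ in the statement, and it deserves to be spelled out rather than absorbed into a phrase about the homotopy exact sequence.
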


\subsection{Varieties with zero logarithmic Kodaira dimension} \label{sec:quasi}
 \begin{lem}[\protecting{\cite[Lemma 1.2]{CDY25}}]  \label{lem:same Kd}
Let $f:X\to Y$ be a dominant birational morphism of  smooth quasi-projective varieties. 
Let $E\subset X$ be a Zariski closed subset such that $\overline{f(E)}$ has codimension at least two.  
Assume $\bar{\kappa}(Y)\geq 0$.
Then $\bar{\kappa}(X-E)=\bar{\kappa}(X)$.  \qed
\end{lem}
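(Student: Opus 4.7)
The plan is to sandwich $\bar{\kappa}(X\setminus E)$ between $\bar{\kappa}(V)$ and $\bar{\kappa}(X)$ for a carefully chosen open dense subvariety $V\subset X\setminus E$, using birational invariance of the logarithmic Kodaira dimension on one side and monotonicity under open immersions on the other.

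Concretely, I would set $V:=X\setminus f^{-1}\bigl(\overline{f(E)}\bigr)$. Because $f$ is dominant and, by hypothesis, $\overline{f(E)}\subsetneq Y$, the set $V$ is a non-empty, hence dense, open subvariety of the irreducible smooth variety $X$. By construction $E\subset f^{-1}(\overline{f(E)})$, so $V\subset X\setminus E$. The restriction $f|_V\colon V\to Y\setminus\overline{f(E)}$ is a dominant morphism whose generic fibre, inherited from that of $f$, is a point; thus $f|_V$ is a birational morphism of smooth quasi-projective varieties.

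Applying the birational invariance of $\bar{\kappa}$ (Iitaka) successively to $f|_V$, then to the open immersion $Y\setminus\overline{f(E)}\hookrightarrow Y$ (whose complement has codimension $\geq 2$ in the smooth $Y$, so both sides share the same function field), and finally to $f$ itself, one obtains
\[
\bar{\kappa}(V)=\bar{\kappa}\bigl(Y\setminus\overline{f(E)}\bigr)=\bar{\kappa}(Y)=\bar{\kappa}(X).
\]
On the other hand, for the chain of open immersions $V\subset X\setminus E\subset X$ of smooth quasi-projective varieties, monotonicity of $\bar{\kappa}$ (on a common log smooth compactification, enlarging the reduced boundary can only enlarge the space of log pluricanonical sections) gives
\[
\bar{\kappa}(V)\ge \bar{\kappa}(X\setminus E)\ge \bar{\kappa}(X).
\]

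Combining the two displays forces every inequality to be an equality, whence $\bar{\kappa}(X\setminus E)=\bar{\kappa}(X)$. The only technical points requiring care are to verify that $f|_V$ is a genuine morphism (immediate from the definition $V=f^{-1}(Y\setminus\overline{f(E)})$) and that the birational invariance of $\bar{\kappa}$ in its usual Iitaka form is applicable to non-proper birational morphisms between smooth quasi-projective varieties, which it is. There is no serious obstacle here: the codimension $\geq 2$ hypothesis is what keeps the argument clean (it makes $V$ dense and matches the two function fields), while the hypothesis $\bar{\kappa}(Y)\geq 0$ does not enter this particular proof and seems to serve only to ensure the asserted equality is non-trivial in the applications for which the lemma is cited.
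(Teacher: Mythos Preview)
Your argument has a genuine gap: the logarithmic Kodaira dimension is \emph{not} invariant under non-proper birational morphisms of smooth quasi-projective varieties, contrary to your explicit claim. A simple counterexample is the open immersion of an elliptic curve minus a point into the elliptic curve: this is a dominant birational morphism with $\bar{\kappa}(Y)=0$ but $\bar{\kappa}(X)=1$. (Even more simply, $\mathbb{G}_m\hookrightarrow\mathbb{A}^1$ gives $0\neq -\infty$.) What is true is the inequality $\bar{\kappa}(X)\geq\bar{\kappa}(Y)$ for any generically finite dominant morphism, coming from pullback of log pluricanonical forms; the reverse inequality requires properness. Consequently both of your asserted equalities $\bar{\kappa}(V)=\bar{\kappa}(Y\setminus\overline{f(E)})$ and $\bar{\kappa}(Y)=\bar{\kappa}(X)$ are unjustified, and the first can genuinely fail since $f|_V$ is just the base change of the non-proper $f$.

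Your remark that the hypothesis $\bar{\kappa}(Y)\geq 0$ ``does not enter this particular proof'' should itself have been a warning sign. In the actual argument (the paper only cites \cite[Lemma~1.2]{CDY25} without reproducing the proof), this hypothesis is used in an essential way. The divisorial components of $E$ are $f$-exceptional; after passing to smooth projective compactifications $\bar{f}:\bar{X}\to\bar{Y}$, one compares $K_{\bar X}+D_X$ with $K_{\bar X}+D_X+\sum E_i$ via the discrepancy formula over the smooth $\bar Y$. The hypothesis $\bar{\kappa}(Y)\geq 0$ furnishes an effective member of $|m(K_{\bar Y}+D_Y)|$ whose pullback, combined with the fact that exceptional discrepancies over a smooth base are $\geq 1$, lets one absorb the extra $\sum E_i$ into a multiple of $K_{\bar X}+D_X$. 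Your sandwich $\bar{\kappa}(V)\geq\bar{\kappa}(X\setminus E)\geq\bar{\kappa}(X)$ is correct, but closing it requires this kind of discrepancy/effectivity argument rather than a non-existent birational invariance.
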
 

\begin{lem}[\protecting{\cite[Lemmas 1.3 \& 1.4]{CDY25}}]
Let $\alpha:X\to \cA$ be  a  (possibly non-proper)   morphism  from a smooth quasi-projective variety $X$ to a semi-abelian variety $\cA$ with   $\overline{\kappa}(X)=0$.   
\begin{thmlist}
	\item \label{lem:abelian pi0} if $\alpha$ is birational, there exists a Zariski closed subset $Z\subset \cA$ of codimension at least two such  that $\alpha$ is   isomorphic
	over $\cA\backslash Z$.
	\item  \label{lem:abelian pi} if $\dim X=\dim \alpha(X)$,  $ \pi_1(X)$ is   abelian.    \qed
\end{thmlist}   
\end{lem}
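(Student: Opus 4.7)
\emph{Strategy.} I would prove \cref{lem:abelian pi0} first, and then use it inside the proof of \cref{lem:abelian pi}. For \cref{lem:abelian pi0}, let $V \subseteq X$ and $U \subseteq \cA$ be the maximal open subsets satisfying $\alpha^{-1}(U) = V$ and $\alpha|_V : V \xrightarrow{\sim} U$, and set $E := X \setminus V$, $Z := \cA \setminus U$. The core observation is that every prime divisor $D \subseteq E$ must be $\alpha$-exceptional (i.e., $\overline{\alpha(D)}$ has codimension $\geq 2$ in $\cA$): if instead $\overline{\alpha(D)}$ were a divisor, then at the generic point $\xi_D$ of $D$ the pullback $\cO_{\cA,\alpha(\xi_D)} \hookrightarrow \cO_{X,\xi_D}$ would be an inclusion of DVRs inside the common function field $K(X) = K(\cA)$, hence an equality, so $\alpha$ would already be an isomorphism on a Zariski neighborhood of $\xi_D$, contradicting $D \subseteq E$. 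Consequently $\overline{\alpha(E)}$ has codimension $\geq 2$ in $\cA$, and \cref{lem:same Kd} (with $Y = \cA$, $\bar{\kappa}(\cA) = 0$) yields $\bar{\kappa}(V) = \bar{\kappa}(X \setminus E) = \bar{\kappa}(X) = 0$. Transferring along $V \cong U$ gives $\bar{\kappa}(\cA \setminus Z) = 0$, and \cref{prop:Koddimabb}(b) forces $Z$ to have no codimension-one component.

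For \cref{lem:abelian pi}, I would first reduce to the case where $\alpha$ is surjective. The standard inequality $\bar{\kappa}(Y) \leq \bar{\kappa}(X)$ for a dominant generically finite morphism of smooth quasi-projective varieties, applied to a smooth model of $\overline{\alpha(X)}$, combined with \cref{prop:Koddimabb}(a), shows that $\overline{\alpha(X)}$ is a translate of a semiabelian subvariety of $\cA$; after translating this subvariety to contain the origin and replacing $\cA$ by it, I may assume $\alpha$ is surjective and generically finite. Applying the quasi-Stein factorisation \cref{lem:Stein} to $\alpha$ produces $\alpha = g \circ f$ with $f : X \to S$ a birational algebraic fiber space and $g : S \to \cA$ a finite morphism of normal quasi-projective varieties. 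Granted that $g$ is étale — the main obstacle, discussed below — $S$ is an étale cover of a semiabelian variety, hence itself semiabelian (in particular smooth); then \cref{lem:abelian pi0} applied to $f$ shows that $f$ is an isomorphism outside a closed subset of $S$ of codimension at least two, and Zariski--Nagata purity of the branch locus for fundamental groups gives $\pi_1(X) \cong \pi_1(S)$, which is abelian.

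\emph{Main obstacle.} The heart of the proof is establishing that $g$ is étale. By purity of the branch locus (Zariski--Nagata), it suffices to show $g$ is étale in codimension one. On suitable log-smooth compactifications of $g$, the log ramification formula reads $K_{\bar{S}} + \partial S = \bar{g}^{*}(K_{\bar{\cA}} + \partial \cA) + R_{\log}$, where $R_{\log}$ is an effective divisor whose vanishing is equivalent to $g$ being étale in codimension one. Using $K_{\bar{\cA}} + \partial \cA \sim 0$ and $\bar{\kappa}(S) = \bar{\kappa}(X) = 0$, one obtains $\kappa(\bar{S}, R_{\log}) = 0$. The delicate step is deducing $R_{\log} = 0$ from this: for this I would push $R_{\log}$ down to $\bar{\cA}$ via $\bar{g}$ and invoke the Ueno--Kawamata theorem on effective divisors on log-compactifications of semiabelian varieties, whose logarithmic Kodaira dimension is strictly positive unless the support degenerates along a translate of a proper semiabelian subvariety; combined with the finite covering structure of $g$, this forces $R_{\log} = 0$.
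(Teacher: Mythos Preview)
The paper does not prove this lemma; it is stated with a \qed and attributed to \cite[Lemmas~1.3 \& 1.4]{CDY25}, so there is no in-paper argument to compare your proposal against.

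That said, your strategy is the standard one and is essentially sound, with two points that deserve more care. For \cref{lem:abelian pi0}, your DVR argument only shows that $\alpha$ is a \emph{local isomorphism at} the generic point $\xi_D$, not that $\alpha$ is an \emph{isomorphism over} a neighbourhood of $\alpha(\xi_D)$; these differ when other points of $X$ map near $\alpha(\xi_D)$, so the contradiction with $D\subseteq E$ is not yet established. The clean fix is to work instead with $E':=\{x\in X:\alpha\text{ is not a local isomorphism at }x\}$: the DVR argument shows $\overline{\alpha(E')}$ has codimension $\geq 2$, and $X\setminus E'\to\cA$ is \'etale, separated, birational with smooth target, hence an open immersion by Zariski's Main Theorem. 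From there your use of \cref{lem:same Kd} and \cref{prop:Koddimabb}(b) goes through verbatim (after enlarging $Z$ by the codimension-$\geq 2$ set $\overline{\alpha(E')}$).

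For \cref{lem:abelian pi}, your final purity step gives only a surjection $\pi_1(S)\twoheadrightarrow\pi_1(X)$, not the isomorphism you claim: the preimage $f^{-1}(Z)\subset X$ can certainly contain divisors (think of exceptional divisors of $f$), so $\pi_1(X\setminus f^{-1}(Z))\to\pi_1(X)$ need not be injective. But a surjection from an abelian group is all you need. Your identification of the \'etaleness of $g$ as the crux is correct, and the approach via the log-ramification formula together with Kawamata's structure theorem for subvarieties of semi-abelian varieties is indeed the expected route; you should be aware that making the step ``$\kappa(R_{\log})=0\Rightarrow R_{\log}=0$'' rigorous requires some care with the passage to a smooth model of $S$ and with the support of the pushforward.
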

  \subsection{Hyperbolicity of varieties with big fundamental groups}
  In \cite{CDY22}, we prove  hyperbolicity of quasi-projective varieties admitting certain local systems. 
	 \begin{thm}[\protecting{\cite[Theorem A]{CDY22}}]\label{main2}
	Let $X$ be a complex   quasi-projective normal variety and let $G$ be a semisimple algebraic group over $\bC$. If  $\varrho:\pi_1(X)\to G(\bC)$ is a big  and Zariski dense representation, then   there is a proper Zariski closed subset $Z\subsetneqq X$  such that
	\begin{thmlist}
		\item \label{main:log general type}    any closed  subvariety   of $X $ not contained in $Z$ is  of log general type. In particular, $X $ is of log general type.
		\item \label{main:pseudo Picard}   The variety $X $ is \emph{pseudo Brody hyperbolic}, i.e.,  all entire curves in $X $ lie on $Z$.  \qed
	\end{thmlist} 
\end{thm}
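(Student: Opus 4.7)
The plan is to convert the big, Zariski-dense representation $\varrho:\pi_1(X)\to G(\bC)$ into a geometric/analytic object on $X$ whose negative curvature simultaneously forces pseudo-Brody hyperbolicity and bigness of the logarithmic canonical bundle, so that both conclusions follow from the same exceptional set $Z$. The key tools should be non-abelian Hodge theory on quasi-projective varieties, equivariant harmonic maps to Bruhat-Tits buildings at non-archimedean places, and Nevanlinna-style second main theorems; the role of the bigness hypothesis is to guarantee that the resulting object is non-degenerate transversally to a proper closed subset, and the role of Zariski density into a \emph{semisimple} group is to permit rigidity/reduction to Hodge-theoretic structures.

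My first step would be to fix a smooth log compactification $(\overline{X},D)$ of $X$ and apply Mochizuki's theory of tame pure harmonic bundles to the flat bundle associated to $\varrho$, obtaining a pluriharmonic metric with controlled asymptotics along $D$. Using Corlette-Simpson-type rigidity, permissible because $G$ is semisimple and $\varrho$ is Zariski dense, I would either deform $\varrho$ through Simpson's $\bC^*$-action to a $\bC$-variation of Hodge structure, or analyse the resulting Higgs bundle $(E,\theta)$ directly and extract the Kodaira-Spencer-type map whose non-degeneracy locus encodes bigness of $\varrho$. In parallel, at each non-archimedean completion $K_v$ of a finitely generated field of definition of $\varrho$ at which the image is unbounded, I would attach a $\varrho$-equivariant pluriharmonic map from $\widetilde{X}$ to the Bruhat-Tits building $\Delta(G_{K_v})$ via Gromov-Schoen; these maps detect hyperbolicity in the directions along which the archimedean harmonic bundle may happen to degenerate.

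The second step is to define $Z\subsetneqq X$ as the union of the degeneracy loci of all these harmonic objects, together with the image of a Kollár-type Shafarevich reduction detecting vanishing cycles of $\varrho$. Bigness of $\varrho$ together with Zariski density guarantees $Z\neq X$. On a closed subvariety $Y\not\subset Z$, Griffiths-type curvature inequalities for the restricted Higgs bundle (or period data) yield positivity of some tensor power of $\omega_Y(\log \partial Y)$, proving \cref{main:log general type}. For \cref{main:pseudo Picard}, any entire curve $f:\bC\to X$ with image not contained in $Z$ would give a pull-back Higgs bundle $f^*(E,\theta)$ and a pull-back harmonic map to $\Delta(G_{K_v})$ that are non-trivial; a logarithmic Ahlfors-Schwarz lemma, combined with a Brotbek-Brunebarbe-type second main theorem in the tame quasi-projective setting, then forces $f$ to be constant in contradiction.

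The main obstacle will be coordinating the archimedean construction with the various non-archimedean ones so that a \emph{single} proper closed subset $Z$ works simultaneously for both parts, and sharpening the logarithmic second main theorem enough that the defect contributions coming from the boundary divisor $D$ do not swallow the hyperbolicity estimate. Controlling the tame asymptotics of the pluriharmonic metric near $D$, and relating the non-archimedean Shafarevich loci to the archimedean degeneracy loci so that their union remains Zariski closed of proper dimension, is the critical technical ingredient that distinguishes the quasi-projective setting from the projective case and that \cite{CDY22} must develop in full generality.
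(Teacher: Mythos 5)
First, note that this paper does not prove the statement at all: it is quoted verbatim as \cite[Theorem A]{CDY22} (Part~II of the series, ``via non-abelian Hodge theories'') and closed with a \emph{qed} symbol, so there is no in-paper argument to compare against line by line. Your outline does identify the strategy that the companion paper actually follows --- the archimedean/non-archimedean dichotomy (tame pluriharmonic bundles and the reduction to $\bC$-VHS on one side, Gromov--Schoen-type equivariant pluriharmonic maps to Bruhat--Tits buildings at places where the image is unbounded on the other), combined with Nevanlinna-theoretic hyperbolicity statements. In that sense the approach is the right one.

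However, as written the proposal is a research plan rather than a proof: every step that carries the mathematical weight is stated as a goal or explicitly flagged as an ``obstacle.'' Concretely, three gaps remain. (1) You never show that the union of the archimedean degeneracy locus, the non-archimedean degeneracy loci, and the Shafarevich-type locus is a \emph{proper Zariski closed} subset $Z$; bigness and Zariski density do not give this for free, and coordinating the infinitely many places so that a single algebraic $Z$ works is precisely the content of the reduction theorems in \cite{CDY22} (via spectral one-forms and a factorization through a ``generalized Albanese plus period map''). (2) For \cref{main:log general type} you assert that Griffiths-type curvature inequalities for the restricted Higgs bundle yield bigness of $\omega_Y(\log\partial Y)$ for every $Y\not\subset Z$; but restricting a harmonic bundle to $Y$ controls the log canonical bundle of $Y$ only if the associated period map or spectral one-forms are generically immersive along $Y$, and establishing this uniformly in $Y$ (rather than for $X$ itself) is where the actual argument lives --- in \cite{CDY22} it passes through \cref{lem:kollar}-type factorizations and results on maximal quasi-Albanese dimension from Part~I. (3) For \cref{main:pseudo Picard} you invoke a ``logarithmic Ahlfors--Schwarz lemma combined with a second main theorem in the tame quasi-projective setting'' without specifying it; the boundary defect terms are exactly what could destroy the estimate, and you acknowledge this without resolving it. So the proposal correctly names the toolbox but does not constitute a proof of either assertion.
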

Recall that $\varrho:\pi_1(X)\to G(\bC)$  is big, if for any positive-dimensional closed subvariety $Z$ of $X$ passing to a very general point of $X$, $\varrho({\rm Im}[\pi_1(Z^{\rm norm})\to \pi_1(X)])$ is an infinite group.

  \section{Special varieties and $h$-special varieties} \label{sec:spechspecial}
Special varieties are introduced by Campana \cite{Cam04,Cam11} in his  remarkable program of classification of geometric orbifolds.  In this subsection we briefly recall definitions and properties of special varieties, and we refer the readers to \cite{Cam11} for more details. 
 
Let $f : X \to Y$ be a dominant morphism between quasi-projective smooth varieties with connected general fibers, that admits a compactification \(\overline{f} : \overline{X} \to \overline{Y}\), where \(\overline{X} = X \sqcup D\) (resp. \(\overline{Y} = Y \sqcup G\)) is a compactification with simple normal crossing boundary divisor.  We will consider $(\overline{X}| D)$ as a geometric orbifold  defined in \cite[D\'efinition 2.1]{Cam11} and $\bar{f}:(\overline{X}| D)\to \overline{Y}$ as  an orbifold morphism  defined in \cite[D\'efinition 2.4]{Cam11} .  In \cite[\S 2.1]{Cam11}, Campana defined the \emph{multiplicity divisor} $\Delta(\bar{f}, D)\subset \overline{Y}$ of $\overline{f}$, for which the \emph{orbifold base} of $\overline{f}$ is the pair $(\overline{Y}|\Delta(\bar{f}, D))$. Note that with this definition, one has \(\Delta(\bar{f}, D) \geq G\). 

 The \emph{Kodaira dimension} of $\overline{f}:(\overline{X}| D)\to \overline{Y}$, denoted by $\kappa(\overline{f}, D)$, is defined to be 
\begin{align} \label{eq:Kodaira}
\kappa(\overline{f}, D):=\inf \{\kappa(\overline{Y}'|\Delta(\bar{f}', D')) \},
\end{align} 
where $\overline{f}':\overline{X}'\to \overline{Y}'$ ranges over all \emph{birational models} of $\overline{f}$; i.e., $\overline{f}':\overline{X}'\to \overline{Y}'$ is an algebraic fiber space between smooth projective varieties   such that we have the following commutative diagram
\begin{equation*}
	\begin{tikzcd}
		\overline{X}' \arrow[r, "u"]\arrow[d, "\overline{f}'"] & \overline{X}\arrow[d, "\overline{f}"] \\
		\overline{Y}' \arrow[r, "v"] & \overline{Y} 
 	\end{tikzcd}
\end{equation*}
where $u$ and $v$ are  birational morphisms, and \(D' = u^{-1}(D)\) is a simple normal crossing divisor on \(\overline{X}'\). We note that $\kappa(\bar{f},D)$ is thus a birational invariant and does not depend on the choice of  the compactification  of $X$. Hence we  define the Kodaira dimension of $f:X\to Y$ to be
\begin{align}\label{eq:Kodaira2}
	 \kappa(Y,f):=\kappa(\bar{f},D).
\end{align} 

 \begin{dfn}[Campana's specialness]\label{def:special}
Let $X$ be a   quasi-projective variety. 
	 We say that $X$ is \emph{weakly special} if for any finite \'etale cover $\widehat{X}\to X$ and any proper birational modification $\widehat{X}'\to \widehat{X}$, there exists no dominant morphism  $\widehat{X}'\to Y$  with  connected general fibers such that $Y$ is a positive-dimensional quasi-projective normal variety of log general type. The variety $X$ is \emph{special} if for  any proper birational modification $\widehat{X}\to X$ and any  dominant morphism $f:\widehat{X}\to Y$ over a quasi-projective normal variety $Y$ with connected general fibers,  we have $\kappa(Y,f)<\dim  {Y}$.
 \end{dfn}

Campana defined $X$ to be \emph{$H$-special} if $X$ has vanishing Kobayashi pseudo-distance.
Motivated by \cite[Definition 9.1]{Cam04}, we introduce the following definition.

\begin{dfn}[$h$-special]\label{defn:20230407}
Let $X$ be a smooth quasi-projective variety.
We define the equivalence relation $x\sim y$ of two points $x,y\in X$ if and only if there exists a sequence of holomorphic maps $f_1,\ldots,f_l:\mathbb C\to X$ such that letting $Z_i\subset X$ to be the Zariski closure of $f_i(\mathbb C)$, we have 
$$x\in Z_1, Z_1\cap Z_2\not=\emptyset, \ldots, Z_{l-1}\cap Z_l\not=\emptyset, y\in Z_l.$$
We set $R=\{ (x,y)\in X\times X; x\sim y\}$.
We define $X$ to be \emph{hyperbolically special} ($h$-special for short) if and only if  $R\subset X\times X$ is Zariski dense.
\end{dfn}

By definition, rationally connected projective varieties are $h$-special without refering a theorem of Campana and Winkelmann \cite{CW16}, who proved that all rationally connected projective varieties contain Zariski dense entire curves.  

\begin{lem}\label{lem:20230406}
If a smooth quasi-projective variety $X$ admits a Zariski dense entire curve $f:\mathbb C\to X$, then $X$ is $h$-special.
\end{lem}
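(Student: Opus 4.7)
The plan is to observe that the definition of the relation $\sim$ allows chains of length one, so a single Zariski dense entire curve already witnesses the $h$-specialness of $X$.

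More concretely, suppose $f:\mathbb{C}\to X$ is a Zariski dense entire curve, meaning that the Zariski closure $Z:=\overline{f(\mathbb{C})}$ equals $X$. For any pair of points $x,y\in X$, I would take $l=1$ and $f_1=f$ in \cref{defn:20230407}. Then the only conditions to verify are $x\in Z_1$ and $y\in Z_1$ (the chain has no consecutive pairs, so the intersection conditions are vacuous), both of which follow from $Z_1=X$. Therefore $x\sim y$ for every pair $(x,y)$, and $R=X\times X$ is Zariski dense in $X\times X$.

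There is essentially no obstacle here; the lemma is an immediate unpacking of definitions, intended to record that Brody special varieties form a subclass of $h$-special varieties. If one prefers to avoid the $l=1$ case, the same conclusion is obtained with $l=2$ and $f_1=f_2=f$, in which case $Z_1=Z_2=X$, $Z_1\cap Z_2=X\neq\emptyset$, $x\in Z_1$ and $y\in Z_2$.
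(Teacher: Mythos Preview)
Your proof is correct and matches the paper's own argument essentially verbatim: the paper simply notes that Zariski density of $f$ gives $x\sim y$ for all $x,y\in X$, hence $R=X\times X$, so $X$ is $h$-special. Your added remark about the $l=2$ variant is unnecessary but harmless.
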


\begin{proof}
Since $f:\mathbb C\to X$ is Zariski dense, we have $x\sim y$ for all $x,y\in X$.
Hence $R=X\times X$.
Thus $X$ is $h$-special.
\end{proof}

Note that the converse of \cref{lem:20230406} does not hold in general (cf. \Cref{ex:20230418}).
For the smooth case, motivated by Campana's suggestion \cite[11.3 (5)]{Cam11b}, we may ask whether a smooth quasi-projective variety $X$ is $h$-special  if and only if it is special. 
Campana proposed the following tantalizing \emph{abelianity conjecture} (cf. \cite[11.2]{Cam11b}).
 \begin{conjecture}[Campana]\label{conj:Campana}
 	A special smooth projective geometric orbifold   (e.g. smooth quasi-projective variety)  has \emph{virtually abelian} fundamental group.  
 \end{conjecture}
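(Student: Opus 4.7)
Since the paper already provides a counterexample to the conjecture in the smooth quasi-projective setting (the surface of \cref{example} is special and Brody special yet carries a nilpotent, non-virtually-abelian linear representation of $\pi_1$), any plan to prove the conjecture as stated must specialise to the smooth projective case. My plan is to first dispose of linear quotients of $\pi_1(X)$, where \cref{main5} does the bulk of the work, and then to attempt to bootstrap to the full fundamental group.

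For a linear representation $\varrho:\pi_1(X)\to \mathrm{GL}_N(\bC)$, \cref{main5} yields that the identity component of $\overline{\varrho(\pi_1(X))}$ has the form $U\times T$ with $U$ unipotent and $T$ an algebraic torus; the step that remains is to force $U$ to be abelian. In the projective K\"ahler setting I would exploit Morgan's mixed Hodge structure on the Malcev Lie algebra of $\pi_1(X)$: since $X$ is special, no algebraic fiber space $X\to Y$ has an orbifold base of log general type, and combined with the weight constraints coming from Hodge theory this should force the Lie algebra of $U$ to be concentrated in weight one and hence to have vanishing bracket. A parallel non-abelian Hodge theoretic route via Simpson would reduce any semisimple representation to a variation of Hodge structure whose period map must be essentially constant --- a non-trivial period map would produce a quotient of log general type, contradicting specialness --- leaving only the unipotent case, again handled by Malcev--Hodge.

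Passing from linear quotients to the full $\pi_1(X)$ is where I expect the real difficulty. A natural next step is to work with the pro-algebraic completion of $\pi_1(X)$ and apply the Morgan--Hain mixed Hodge structure to every nilpotent quotient, which would settle all residually nilpotent (and in particular all residually linear) quotients. The genuine obstacle is then handling possible non-linear parts of $\pi_1(X)$: one would want a harmonic map and Gromov--Schoen argument together with an analogue of \cref{main2} for actions on non-Archimedean buildings or CAT$(0)$ spaces, combined with a specialness input ruling out non-trivial such actions. Producing that last ingredient --- a specialness-driven rigidity theorem for isometric actions on non-positively curved targets beyond the linear case --- is, in my view, the main bottleneck, and its absence is precisely why the authors of this paper propose the weaker \cref{conj:revised2} in place of Campana's original formulation.
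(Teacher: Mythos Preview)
The statement you are attempting to prove is a \emph{conjecture}, not a theorem, and the paper contains no proof of it; on the contrary, the paper's \cref{example} disproves it in the smooth quasi-projective case. So there is no ``paper's own proof'' to compare your proposal against. What you have written is not a proof either: it is a research-program sketch in which you yourself flag the decisive gaps (passing from linear quotients to the full $\pi_1$, and the absence of a specialness-based rigidity theorem for non-linear actions). That is an honest assessment of the landscape, but it should not be presented as a proof proposal.

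Two further points. First, for the projective case you focus on, the linear part of your outline is already a theorem: the paper itself recalls in the introduction that all linear quotients of $\pi_1(X)$ are virtually abelian when $X$ is projective and special (Campana, \cite[Theorem 7.8]{Cam04}), so invoking \cref{main5} and then Morgan/Simpson machinery is redundant there. Second, your heuristic that ``specialness forces the Malcev Lie algebra to be concentrated in weight one, hence abelian'' is not justified as stated: weight-one concentration does not follow merely from the absence of maps to bases of log general type, and even in the projective case one needs the full argument of \cite{Cam01} (or Delzant \cite{Del10}) rather than a one-line Hodge-theoretic claim. The genuine open content of \cref{conj:Campana} --- getting beyond linear or residually nilpotent quotients --- remains untouched by your outline, exactly as you acknowledge in your final paragraph.
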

 In \Cref{example}  below we give an example of a special (and $h$-special) smooth quasi-projective variety with \emph{nilpotent} but not virtually abelian fundamental group, which thus disproves \cref{conj:Campana} for non-compact smooth quasi-projective varieties. Therefore, we revise Campana's conjecture as follows. 
 \begin{conjecture}[Nilpotency conjecture]\label{conj:revised}
 	An $h$-special or   special smooth quasi-projective variety has \emph{virtually nilpotent} fundamental group.  
 \end{conjecture}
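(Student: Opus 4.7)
The plan is to reduce the conjecture to the already-proven linear case, namely \cref{main5}, which asserts that the image of every linear representation $\varrho:\pi_1(X)\to \mathrm{GL}_N(\mathbb{C})$ is virtually nilpotent. If one could establish that $\pi_1(X)$ is itself linear, i.e.\ that there is a faithful representation $\iota:\pi_1(X)\hookrightarrow \mathrm{GL}_N(\mathbb{C})$ for some $N$, then applying \cref{main5} to $\iota$ would immediately yield that $\pi_1(X)\cong \iota(\pi_1(X))$ is virtually nilpotent. The conjecture therefore reduces, modulo the (itself very hard) linearity question for $\pi_1(X)$ of special/$h$-special quasi-projective varieties, to what is proved in this paper.

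To attack linearity, a natural strategy is to assemble sufficiently many finite-dimensional representations of $\pi_1(X)$ via Shafarevich-type and Hodge-theoretic methods, and then to rule out exotic quotients by contradiction. Given any non-trivial semisimple quotient, \cref{lem:kollar} produces a fibration $f:\widetilde{X}\to Y$ whose base $Y$ carries a big representation; feeding this into \cref{main2} forces $Y$ to be of log general type, contradicting specialness of $X$ unless $\dim Y=0$. In the $h$-special case, the pseudo Brody hyperbolicity of \cref{main:pseudo Picard} forces each entire curve $f_i:\mathbb{C}\to X$ in a chain of \cref{defn:20230407} into the exceptional Zariski closed subset $Z\subsetneq X$, so that the Zariski density of the chain-connected relation $R\subset X\times X$ collapses any Zariski-dense image. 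The unipotent/nilpotent radical of $\pi_1(X)$ should then be controlled by Deligne's mixed Hodge structure on the unipotent completion, much as in the proof of \cref{main5} and the structure theorem \cref{thm:20230510}, and the quasi-Albanese morphism would account for the abelian-by-torus pieces.

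The principal obstacle, and the reason the statement is offered as a conjecture rather than a theorem, is that every tool deployed in this paper (\cref{main2}, Nevanlinna theory, Deligne's mixed Hodge theory, and the structure theorem \cref{thm:20230510}) only produces \emph{linear} information about $\pi_1(X)$; none of them sees non-linear quotients. Thus the conjecture genuinely contains the open linearity problem for $\pi_1(X)$ of a smooth special or $h$-special quasi-projective variety. A complete proof would plausibly require new input, such as a non-abelian Hodge theory on quasi-projective targets adapted to prescribed residues at infinity, or a direct geometric argument establishing that $\pi_1(X)$ is residually linear in the special/$h$-special setting; at that point \cref{main5}, applied to a faithful finite-dimensional representation, would close the argument.
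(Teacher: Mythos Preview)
Your assessment is correct: the statement labeled \cref{conj:revised} is a \emph{conjecture} in the paper, not a theorem, and the paper offers no proof of it. What the paper does prove is the linear shadow, \cref{main5} (equivalently \cref{thm:VN}): every linear quotient of $\pi_1(X)$ is virtually nilpotent. You have accurately identified both this and the obstruction to upgrading it to the full conjecture, namely that nothing in the paper's toolkit (non-abelian Hodge theory, Nevanlinna theory, Deligne's mixed Hodge theory) controls non-linear quotients of $\pi_1(X)$.

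One minor correction to your sketch: linearity of $\pi_1(X)$ is sufficient but not necessary for the conjecture. A group can be virtually nilpotent without being linear (although finitely generated nilpotent groups are linear, so in the end this distinction is moot for the conjecture itself). More substantively, even residual linearity would not immediately suffice: knowing that every linear quotient is virtually nilpotent does not imply the group is virtually nilpotent, since an inverse limit of nilpotent groups of unbounded class need not be virtually nilpotent. So the gap is slightly worse than you suggest; one would need either a single faithful finite-dimensional representation, or a uniform bound on the nilpotency class across all linear quotients, neither of which the paper provides.
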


\section{Some properties of $h$-special varieties}\label{sec:20230406}
One can easily see that if $X$ is weakly special, then any finite \'etale cover $X'$ of $X$ is also weakly special.  
It is proven by Campana in \cite{Cam11} that this result also holds for special varieties. 
\begin{thm}[{\cite[Proposition 10.11]{Cam11}}]\label{special} 
If $X$ is  special smooth quasi-projective variety, then any finite \'etale cover $X'$ of $X$ is  special. Moreover, $X$ is weakly special.  \qed
\end{thm}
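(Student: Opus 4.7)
The theorem has two parts; I address them in sequence. For the second assertion (special implies weakly special), the key observation is that $\Delta(\bar f,D)\ge G$ by definition, so passing to the infimum over birational models gives $\kappa(Y,f)\ge \bar\kappa(Y)$. If $Y$ is of log general type and $\dim Y>0$, this forces $\kappa(Y,f)=\dim Y$, contradicting specialness of the source. Granted the first assertion (finite \'etale invariance of specialness), applying this directly to any finite \'etale cover of $X$ yields the conclusion that $X$ is weakly special.

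The bulk of the proof is therefore the first assertion, which I argue by contradiction. Suppose $X$ is special but some finite \'etale cover $\widehat X\to X$ is not. Replacing $\widehat X$ by its Galois closure over $X$, I may assume that $\pi:\widehat X\to X$ is Galois with group $G$. By non-specialness of $\widehat X$, there exist a proper birational modification $\mu:\widehat X'\to \widehat X$ and an algebraic fiber space $f:\widehat X'\to Y$ with connected general fibers such that $\kappa(Y,f)=\dim Y>0$. After passing to a $G$-equivariant resolution, I may further assume that $G$ acts on $\widehat X'$ with $\pi\circ\mu$ being the quotient map, birationally.

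The central construction is a $G$-equivariant refinement of $f$: consider the product morphism
\[
F=\prod_{g\in G}(f\circ g^{-1})\colon \widehat X'\longrightarrow Y^{|G|},
\]
take the normalised Zariski closure $W$ of its image (endowed with the permutation $G$-action on the factors), and apply the quasi-Stein factorisation (\cref{lem:Stein}) to the induced dominant morphism $\widehat X'\to W$. This yields an algebraic fiber space $\tilde f:\widehat X'\to \tilde Y$ with $\tilde Y$ carrying a natural $G$-action compatible with both $F$ and $\tilde f$. Taking the quotient, $\tilde f$ descends to an algebraic fiber space $f_X:X'\to \tilde Y/G$ from a birational modification $X'$ of $X$, with $\dim(\tilde Y/G)=\dim Y>0$ and connected general fibers.

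\textbf{Main obstacle.} The heart of the argument is showing $\kappa(\tilde Y/G, f_X)=\dim(\tilde Y/G)$, which contradicts the specialness of $X$ and closes the proof. This is done in two steps. First, since each projection $\tilde Y\to Y$ is dominant and the orbifold multiplicity divisor $\Delta(\overline{\tilde f},\tilde D)$ dominates the pullback of $\Delta(\bar f,D)$ (multiplicities only grow under pullback to a covering total space), one obtains $\kappa(\tilde Y,\tilde f)=\dim \tilde Y$. Second, one must verify that the ramified $G$-quotient preserves the maximal Kodaira dimension of the orbifold base. This is the essential technical point: one needs to match, up to birational modification, the orbifold multiplicity divisor $\Delta(\bar f_X,D_X)$ with the descent of $\Delta(\overline{\tilde f},\tilde D)$ under $q:\tilde Y\to \tilde Y/G$, checking that the ramification of $q$ is absorbed into $\Delta(\bar f_X,D_X)$ so that $G$-invariant pluri-log-canonical sections on the orbifold base of $\tilde f$ descend to pluri-log-canonical sections on the orbifold base of $f_X$, and conversely that $G$-averaging produces enough invariant sections. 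This orbifold Riemann--Hurwitz bookkeeping is the genuinely delicate step, and is where Campana's original argument does its real work.
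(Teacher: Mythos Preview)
The paper does not prove this theorem at all; it is stated with a \qed\ marker immediately after the statement, meaning the result is simply quoted from Campana's original source \cite[Proposition 10.11]{Cam11}. There is therefore nothing in the paper to compare your argument against.

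That said, your sketch is a reasonable outline of the standard strategy. The second assertion (special $\Rightarrow$ weakly special, granted the first) is correctly argued: since $\Delta(\bar f,D)\ge G$ in every birational model, one has $\kappa(Y,f)\ge\bar\kappa(Y)$, and log-general-type $Y$ forces $\kappa(Y,f)=\dim Y$. For the first assertion, your Galois-closure and $G$-equivariant product construction is the right framework, and you correctly flag the genuine difficulty: tracking orbifold multiplicities through the quotient $\tilde Y\to\tilde Y/G$ so that bigness of the orbifold canonical bundle descends. One point deserves more care than you give it: the claim that ``multiplicities only grow under pullback'' when passing from $f$ to $\tilde f$ via the projection $\tilde Y\to Y$ is not automatic, because the general fiber of $\tilde f$ may be strictly smaller than that of the composite $\widehat X'\to\tilde Y\to Y$, and the orbifold base of a refinement is not in general comparable to the pullback of the coarser orbifold base. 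Campana handles this via his notion of fibrations of general type being stable under such refinements (the ``neat'' or ``high'' model machinery in \cite{Cam11}), which is precisely the bookkeeping you defer to. So your proposal is an honest outline rather than a proof, but it correctly locates where the real work lies.
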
 

For $h$-special varieties, we have the following.

\begin{lem}\label{lem:202304061}
Let $X$ be an $h$-special quasi-projective variety, and let $p:X'\to X$ be a finite \'etale morphism from a quasi-projective variety $X'$.
Then $X'$ is $h$-special.
\end{lem}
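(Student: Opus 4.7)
The plan is to establish two things in sequence: (i) a \emph{lifting claim} showing that chains of entire curves witnessing $x\sim y$ in $X$ can be pulled back to chains in $X'$ starting from any preimage of $x$, and (ii) a \emph{dimension argument} that upgrades this lifting into Zariski density of $R'$ in $X'\times X'$.

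For step (i), I fix $(x,y)\in R$ and $x'\in p^{-1}(x)$ and induct on the length $\ell$ of a chain witnessing $x\sim y$. In the base case $\ell=1$, a single entire curve $f\colon\mathbb{C}\to X$ with Zariski closure $Z\ni x,y$ is given. Since $p$ is finite \'etale, it is a topological covering map in the classical topology, so the simply connected $\mathbb{C}$ admits $d=\deg p$ lifts $\tilde{f}^{(1)},\ldots,\tilde{f}^{(d)}\colon\mathbb{C}\to X'$, one for each choice of preimage of the base point. A short verification, using that $p$ is open, closed, and surjective (so $p^{-1}(\overline{S})=\overline{p^{-1}(S)}$), yields $\bigcup_i\overline{\tilde{f}^{(i)}(\mathbb{C})}=p^{-1}(Z)$, whence every irreducible component of $p^{-1}(Z)$ is the Zariski closure $W_i$ of some lift. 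Picking $W_i\ni x'$ and using surjectivity of $p|_{W_i}\colon W_i\to Z$, there exists $y'\in W_i\cap p^{-1}(y)$, and the single entire curve $\tilde{f}^{(i)}$ witnesses $(x',y')\in R'$. The inductive step cuts a chain $Z_1,\ldots,Z_\ell$ at an intersection point $w\in Z_1\cap Z_2$: the base case produces $w'\in p^{-1}(w)$ with $(x',w')\in R'$, and the inductive hypothesis applied to $Z_2,\ldots,Z_\ell$ gives $y'\in p^{-1}(y)$ with $(w',y')\in R'$; transitivity of $\sim'$ concludes.

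Step (ii) is then a clean dimension count. Set $E':=\overline{R'}\subset X'\times X'$. Step (i) shows that $(p\times p)(R')\supset R$. The product map $p\times p\colon X'\times X'\to X\times X$ is finite, hence closed, so $(p\times p)(E')$ is Zariski closed and contains the Zariski dense set $R$; therefore $(p\times p)(E')=X\times X$. Since $(p\times p)|_{E'}\colon E'\to X\times X$ has finite fibers, at least one irreducible component of $E'$ surjects onto $X\times X$ and has dimension $2\dim X=2\dim X'$. Because $X'\times X'$ is irreducible of the same dimension (using that $X'$ is irreducible by the paper's convention), that component, and hence $E'$ itself, must coincide with $X'\times X'$. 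This gives Zariski density of $R'$, so $X'$ is $h$-special.

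The main obstacle is the bookkeeping in step (i), in particular verifying that the images of all global lifts of $f$ really do cover $p^{-1}(f(\mathbb{C}))$ and that their Zariski closures sweep out \emph{every} irreducible component of $p^{-1}(Z)$, even when $Z$ is singular or non-normal. This is where the topological covering description of $p$ in the classical topology, the simple connectedness of $\mathbb{C}$, and the identity $p^{-1}(\overline{S})=\overline{p^{-1}(S)}$ for surjective finite \'etale $p$ all come together. Once this lifting statement is in place, both the induction and the concluding dimension count are formal.
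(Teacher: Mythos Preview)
Your proof is correct and follows the same overall architecture as the paper's: establish $R\subset (p\times p)(R')$ by lifting chains of entire curves, then conclude via a dimension count using finiteness of $p\times p$. The endgame (step~(ii)) is essentially identical to the paper's.

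The difference lies in how a single entire curve is lifted. You lift $f\colon\mathbb{C}\to X$ directly through the topological covering $p$, using simple connectedness of $\mathbb{C}$, and then identify the irreducible components of $p^{-1}(Z)$ with the Zariski closures of the $d$ global lifts via the identity $p^{-1}(\overline{S})=\overline{p^{-1}(S)}$. The paper instead passes through the normalization $Z^{\mathrm{norm}}$: it takes a connected component $W$ of $Z^{\mathrm{norm}}\times_X X'$, notes that $W\to Z^{\mathrm{norm}}$ is finite \'etale so $W$ is normal (hence connected $\Rightarrow$ irreducible), lifts $f$ first to $Z^{\mathrm{norm}}$ and then through the \'etale cover to $W$, and works with the images $\varphi(W)\subset X'$. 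The paper also does the whole chain at once, choosing compatible components $W_1,\dots,W_l$ so that consecutive images meet, rather than inducting on chain length. Your approach is more elementary and transparent for this statement; the paper's normalization route is a bit more algebraic and sidesteps any worry about the possibly singular $Z$ by working upstairs on a normal variety. Either way the content is the same.
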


\begin{proof}
Let $\sim'$ be the equivalence relation on $X'$ and $R'\subset X'\times X'$ be the set defined by $R'=\{(x',y')\in X'\times X'; x'\sim'y'\}$.
Let $q:X'\times X'\to X\times X$ be the induced map.

We shall show $R\subset q(R')$.
So let $(x,y)\in R$.
Then there exists a sequence $f_1,\ldots,f_l:\mathbb C\to X$ such that 
$$x\in Z_1, Z_1\cap Z_2\not=\emptyset,\ldots,Z_{l-1}\cap Z_l\not=\emptyset, y\in Z_l,$$
where $Z_i\subset X$ is the Zariski closure of $f_i(\mathbb C)\subset X$.
Let $Z_i^{\mathrm{norm}}\to Z_i$ be the normalization.
Then we may take a connected component $W_i$ of $Z_i^{\mathrm{norm}}\times_XX'$ such that 
\begin{equation}\label{eqn:202304061}
\varphi_1(W_1)\cap \varphi_{2}(W_{2})\not=\emptyset,\ldots, \varphi_{l-1}(W_{l-1})\cap \varphi_{l}(W_{l})\not=\emptyset,
\end{equation}
where $\varphi_i:W_i\to X'$ is the natural map.
Note that the induced map $W_i\to Z_i^{\mathrm{norm}}$ is \'etale.
Hence $W_i$ is normal.
Since $W_i$ is connected, $W_i$ is irreducible (cf. \cite[\href{https://stacks.math.columbia.edu/tag/0357}{Tag 0357}]{stacks-project}). 
Since $f_i:\mathbb C\to Z_i$ is Zariski dense, we have a lift $f_i':\mathbb C\to Z_i^{\mathrm{norm}}$.
Since $W_i\to Z_i^{\mathrm{norm}}$ is finite \'etale, we may take a lift $g_i:\mathbb C\to W_i$, which is Zariki dense.
Then $\varphi_i\circ g_i:\mathbb C\to X'$ has Zariski dense image in $\varphi_i(W_i)\subset X'$.

We take $a\in W_1$ and $b\in W_l$ such that $p\circ\varphi_1(a)=x$ and $p\circ\varphi_l(b)=y$.
Then $q((\varphi_1(a),\varphi_l(b)))=(x,y)$.
We have $\varphi_1(a)\in\varphi_1(W_1)$ and $\varphi_l(b)\in\varphi_l(W_l)$.
Hence by \eqref{eqn:202304061}, we have $(\varphi_1(a),\varphi_l(b))\in R'$.
Thus $R\subset q(R')$.

Now let $\overline{R'}\subset X'\times X'$ be the Zariski closure.
To show $\overline{R'}= X'\times X'$, we assume contrary that $\overline{R'}\not= X'\times X'$.
Then $\mathrm{dim}\overline{R'}<2\dim X'$.
This contradicts to $X\times X=\overline{R}\subset q(\overline{R'})$.
Hence $\overline{R'}= X'\times X'$.
\end{proof}

\begin{lem}\label{lem:202304063}
Let $X$ be an $h$-special quasi-projective variety.
Let $S$ be a quasi-projective variety and let $p:X\to S$ be a dominant morphism.
Then $S$ is $h$-special.
\end{lem}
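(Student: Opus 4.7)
The plan is to push forward the equivalence relation along $p$ and then exploit dominance. Given any pair $(x,y)\in R\subset X\times X$, choose a witnessing chain of entire curves $f_1,\ldots,f_l:\mathbb{C}\to X$ with Zariski closures $Z_i$ such that $x\in Z_1$, $y\in Z_l$, and $Z_i\cap Z_{i+1}\not=\emptyset$. The natural candidate chain in $S$ is $g_i := p\circ f_i:\mathbb{C}\to S$, whose Zariski closure in $S$ I denote by $W_i$.

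The main point to verify is that $W_i$ coincides with $\overline{p(Z_i)}$. The inclusion $W_i\subseteq\overline{p(Z_i)}$ is immediate from $p(f_i(\mathbb{C}))\subseteq p(Z_i)$. The reverse inclusion uses that $f_i(\mathbb{C})$ is Zariski dense in $Z_i$: if $W_i=\overline{p(f_i(\mathbb{C}))}$ were strictly contained in $\overline{p(Z_i)}$, then $p^{-1}(W_i)$ would be Zariski closed in $X$, and $p^{-1}(W_i)\cap Z_i$ would be a proper closed subset of $Z_i$ containing $f_i(\mathbb{C})$, contradicting density. This is the one "generic fact" used twice in the proof: a morphism of varieties sends any Zariski dense subset of the source to a Zariski dense subset of the closure of its image.

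With this identification in hand, the chain conditions transfer directly to $S$: one has $p(x)\in p(Z_1)\subset W_1$, $p(y)\in p(Z_l)\subset W_l$, and $\emptyset\not= p(Z_i\cap Z_{i+1})\subset p(Z_i)\cap p(Z_{i+1})\subset W_i\cap W_{i+1}$. Hence $p(x)\sim p(y)$ in $S$, so the product map $p\times p:X\times X\to S\times S$ sends $R$ into $R':=\{(s,t)\in S\times S : s\sim t\}$.

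For the conclusion, note that since $p$ is dominant so is $p\times p$, so $\overline{(p\times p)(X\times X)}=S\times S$. Applying the same dense-image observation to $p\times p$ and the Zariski dense subset $R\subset X\times X$ shows that $(p\times p)(R)$ is Zariski dense in $S\times S$; a fortiori $R'$ is Zariski dense, proving that $S$ is $h$-special. I do not anticipate any serious obstacle here: the argument is essentially formal, the only subtlety being the careful identification of the Zariski closure of $g_i(\mathbb{C})$ with $\overline{p(Z_i)}$ so that the chain condition is preserved under $p$.
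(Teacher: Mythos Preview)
Your proof is correct and follows essentially the same approach as the paper: push the chain of entire curves forward along $p$, identify the Zariski closure of $p\circ f_i(\mathbb{C})$ with $\overline{p(Z_i)}$, and conclude by dominance of $p\times p$. If anything, your justification that $W_i=\overline{p(Z_i)}$ is spelled out more carefully than in the original.
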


\begin{proof}
Let $\sim_S$ be the equivalence relation on $S$ and $R_S\subset S\times S$ be the set defined by $R_S=\{(x',y')\in S\times S; x'\sim_Sy'\}$.
Let $q:X\times X\to S\times S$ be the induced map.
Then $q$ is dominant.

We shall show $q(R)\subset R_S$.
Indeed let $q((x,y))\in q(R)$, where $(x,y)\in R$.
Then there exists a sequence $f_1,\ldots,f_l:\mathbb C\to X$ such that 
$$x\in Z_1, Z_1\cap Z_2\not=\emptyset,\ldots,Z_{l-1}\cap Z_l\not=\emptyset, y\in Z_l,$$
where $Z_i\subset X$ is the Zariski closure of $f_i(\mathbb C)\subset X$.
Then the Zariski closure of $p\circ f_i:\mathbb C\to S$ is $\overline{p(Z_i)}$.
We have
$$
p(x)\in \overline{p(Z_1)}, \overline{p(Z_1)}\cap \overline{p(Z_2)}\not=\emptyset,\ldots,\overline{p(Z_{l-1})}\cap \overline{p(Z_l)}\not=\emptyset, p(y)\in \overline{p(Z_l)}.
$$
Hence $(p(x),p(y))\in R_S$.
Thus $q(R)\subset R_S$.
Hence $q(\overline{R})\subset \overline{R_S}$.
By $\overline{R}=X\times X$, we have $\overline{R_S}=S\times S$, for $q$ is dominant.
\end{proof}

\begin{lem}\label{lem:20230407}
Let $X$ be a smooth, $h$-special quasi-projective variety and let $p:X'\to X$ be a proper birational morphism from a quasi-projective variety $X'$.
Then $X'$ is $h$-special.
\end{lem}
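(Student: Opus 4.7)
The plan is to reduce to the case of a single blow-up along a smooth center, and then handle that case by lifting entire curves from $X$ to $X'$ and bridging across exceptional fibers using projective lines. For the reduction, I apply Hironaka's resolution theorem to obtain a smooth quasi-projective variety $Y$ with a proper birational morphism $\mu:Y\to X'$, and then resolve the indeterminacy of $(p\circ\mu)^{-1}:X\dashrightarrow Y$ to further dominate $Y$ by a smooth $Y'$ such that $Y'\to X$ factors as a finite tower of blow-ups along smooth centers. Since $Y'\to Y\to X'$ is dominant, \cref{lem:202304063} shows it suffices to prove $Y'$ is $h$-special, and by induction on the length of this tower the problem reduces to the following claim: if $X$ is smooth and $h$-special and $p:X'=\mathrm{Bl}_CX\to X$ is the blow-up along a smooth subvariety $C\subset X$ of codimension $r\geq 2$, then $X'$ is $h$-special.

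In this blow-up case, set $U=X\setminus C$, $U'=p^{-1}(U)$, and $E'=p^{-1}(C)$ the exceptional $\mathbb{P}^{r-1}$-bundle over $C$. The key lifting step says that for any entire curve $f:\mathbb{C}\to X$ with Zariski closure $Z\not\subset C$, the preimage $V=f^{-1}(U)\subset\mathbb{C}$ has discrete complement, so $\tilde f:=p^{-1}\circ f$ is defined on $V$; by embedding a projective compactification of $X'$ in some $\mathbb{P}^N$, invoking the removability of isolated singularities for meromorphic maps from $\mathbb{C}$ to projective space, and using the properness of $p$, one extends $\tilde f$ uniquely to an entire curve $\mathbb{C}\to X'$ whose Zariski closure is the strict transform $Z'$ of $Z$, with $p|_{Z'}:Z'\to Z$ proper birational and surjective. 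For entire curves whose Zariski closure lies in $C$, one instead lifts via a holomorphic section of $f^*E'\to\mathbb{C}$, which exists by Grauert's theorem on triviality of holomorphic vector bundles over the Stein manifold $\mathbb{C}$. I then claim that for every $(x,y)\in R\cap(U\times U)$, the pair $(p^{-1}(x),p^{-1}(y))$ lies in $R'$. Given a chain $Z_1,\ldots,Z_l$ on $X$ connecting $x$ and $y$ with associated entire curves $f_i$, I lift each $Z_i$ to $Z_i'\subset X'$ as above (the containments $x\in Z_1\cap U$ and $y\in Z_l\cap U$ force $Z_1,Z_l\not\subset C$, so $p^{-1}(x)\in Z_1'$ and $p^{-1}(y)\in Z_l'$); for each adjacent pair either $Z_i\cap Z_{i+1}$ has a point in $U$ whose preimage lies in $Z_i'\cap Z_{i+1}'$, or $Z_i\cap Z_{i+1}\subset C$, in which case I pick $q$ in this intersection together with $a\in Z_i'\cap p^{-1}(q)$ and $b\in Z_{i+1}'\cap p^{-1}(q)$ (non-empty by surjectivity), and join them by a projective line $L\cong\mathbb{P}^1$ in the fiber $p^{-1}(q)\cong\mathbb{P}^{r-1}$ (possible since $r\geq 2$), which is itself the Zariski closure of an entire curve $\mathbb{C}\hookrightarrow L$; inserting $L$ between $Z_i'$ and $Z_{i+1}'$ produces a valid chain on $X'$.

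Since $X$ is $h$-special, $R\cap(U\times U)$ is Zariski dense in $U\times U$, and via the isomorphism $p^{-1}\times p^{-1}:U\times U\to U'\times U'$ the preceding claim produces a Zariski dense subset of $R'\cap(U'\times U')$ inside $U'\times U'$, hence $R'$ is Zariski dense in $X'\times X'$ and $X'$ is $h$-special. The main obstacle I anticipate is verifying that the Hironaka-style reduction to a tower of smooth-center blow-ups applies cleanly in the quasi-projective (non-compact) setting, and ensuring that the section-type lifts of entire curves whose Zariski closure lies in $C$ yield Zariski closures that interact coherently with the bridging rational lines when several such $Z_i$ occur consecutively in a single chain.
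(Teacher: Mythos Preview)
Your proof is correct and follows essentially the same approach as the paper: reduce via Hironaka's elimination of indeterminacies to a tower of blow-ups along smooth centers (the paper applies this directly to $p^{-1}:X\dashrightarrow X'$ without your intermediate resolution $Y$), then in the single blow-up case lift entire curves and bridge across exceptional fibers. The only cosmetic differences are that the paper bridges with a Zariski dense entire curve in the full fiber $\mathbb{P}^{r-1}$ rather than a line, and it shows $R\subset q(R')$ for all of $R$ rather than restricting to $R\cap(U\times U)$; your worry about consecutive $Z_i\subset C$ is unfounded, since the section-type lift $Z_i'$ still surjects onto $Z_i$ under $p$, so the required intersection points with the bridging lines exist.
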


\begin{rem}
We can not drop the smoothness assumption for $X$.
See \Cref{ex:20230418} below.
\end{rem}

\begin{proof}[Proof of \cref{lem:20230407}]
\noindent {\em Step 1. } 
In this step, we assume that $p:X'\to X$ is a blow-up along a smooth subvariety $C\subset X$.
We first prove that every entire curve $f:\mathbb C\to X$ has a lift $f':\mathbb C\to X'$, i.e., $p\circ f'=f$.
The case $f(\mathbb C)\not\subset C$ is well-known, so we assume that $f(\mathbb C)\subset C$.
There exists a vector bundle $E\to C$ so that its projectivization $P(E)\to C$ is isomorphic to $p^{-1}(C)\to C$.
The pull-back $f^*E\to \mathbb C$ is isomorphic to the trivial line bundle over $\mathbb C$.
Thus we may take a non-zero section of $f^*E\to \mathbb C$.
This yields a holomorphic map $f':\mathbb C\to P(E)$.
Hence $f$ has a lift $f':\mathbb C\to X'$.

Now let $\sim'$ be the equivalence relation on $X'$ and $R'\subset X'\times X'$ be the set defined by $R'=\{(x',y')\in X'\times X'; x'\sim'y'\}$.
Let $q:X'\times X'\to X\times X$ be the induced map.

To show $R\subset q(R')$, we take $(x,y)\in R$.
Then there exists a sequence $f_1,\ldots,f_l:\mathbb C\to X$ such that 
$$x\in Z_1, Z_1\cap Z_2\not=\emptyset,\ldots,Z_{l-1}\cap Z_l\not=\emptyset, y\in Z_l,$$
where $Z_i\subset X$ is the Zariski closure of $f_i(\mathbb C)\subset X$.
For each $f_i:\mathbb C\to X$, we take a lift $f_i':\mathbb C\to X'$.
Let $Z_i'\subset X'$ be the Zariski closure of $f_i'(\mathbb C)\subset X'$.
Then the induced map $Z_i'\to Z_i$ is proper surjective.

For each $i=1,2,\ldots,l-1$, we take $z_i\in Z_i\cap Z_{i+1}$.
We define a holomorphic map $\varphi_i:\mathbb C\to X'$ as follows.
If $z_i\not\in C$, then $p^{-1}(z_i)$ consists of a single point $z_i'\in X'$ and thus $z_i'\in Z_i'\cap Z_{i+1}'$.
In particular $Z_i'\cap Z_{i+1}'\not=\emptyset$.
In this case, we define $\varphi_i$ to be the constant map such that $\varphi_i(\mathbb C)=\{z_i'\}$.
If $z_i\in C$, we have $p^{-1}(z_i)=\mathbb P^d$, where $d=\mathrm{codim}(C,X)-1$.
We have $Z_i'\cap p^{-1}(z_i)\not=\emptyset$ and $Z_{i+1}'\cap p^{-1}(z_i)\not=\emptyset$.
In this case, we take $\varphi_i:\mathbb C\to p^{-1}(z_i)$ so that the image is Zariski dense in $p^{-1}(z_i)$.
We define $W_i\subset X'$ to be the Zariski closure of $\varphi_i(\mathbb C)\subset X'$.
Then we have
$$
Z_1'\cap W_1\not=\emptyset, W_1\cap Z_2'\not=\emptyset, Z_2'\cap W_2\not=\emptyset, \ldots, Z_{l-1}'\cap W_{l-1}\not=\emptyset, W_{l-1}\cap Z_l'\not=\emptyset.
$$
We take $a\in Z_1'$ and $b\in Z_l'$ such that $p(a)=x$ and $p(b)=y$.
Then $q((a,b))=(x,y)$ and $(a,b)\in R'$.
Thus $R\subset q(R')$.
This induces $\overline{R'}= X'\times X'$ as in the proof of \cref{lem:202304061}.

\medskip

\noindent {\em Step 2. } 
We consider the general proper birational morphism $p:X'\to X$.  
 Then we can apply a theorem of Hironaka 
 (cf. \cite[Corollary 3.18]{Kol07}) 
to $p^{-1}:X\dashrightarrow X'$ to conclude 
there exists a sequence of blowing-ups
$$
X_k\overset{\psi_k}{\longrightarrow}X_{k-1}\overset{\psi_{k-1}}{\longrightarrow}X_{k-2}\longrightarrow\cdots\longrightarrow X_1\overset{\psi_1}{\longrightarrow}X_0=X$$
such that
\begin{itemize}
\item
each $\psi_i:X_{i}\to X_{i-1}$ is a blow-up along a smooth subvariety of $X_{i-1}$, and
\item
there exists a morphism $\pi:X_k\to X'$ such that $p\circ\pi=\psi_1\circ\psi_2\circ\cdots\circ\psi_k$.
\end{itemize}
Then by the step 1, each $X_i$ is $h$-special.
In particular, $X_k$ is $h$-special.
Thus by \cref{lem:202304063}, $X'$ is $h$-special.
\end{proof}

\begin{lem}\label{lem:202304065}
If a positive dimensional quasi-projective variety $X$ is pseudo Brody hyperbolic, then $X$ is not $h$-special.
\end{lem}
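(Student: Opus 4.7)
The plan is to identify a proper Zariski closed subset of $X\times X$ that contains the relation $R$ of \cref{defn:20230407}, and then conclude by a dimension count that $R$ is not Zariski dense.

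First I would invoke pseudo Brody hyperbolicity to fix a proper Zariski closed subset $\Xi\subsetneq X$ such that every non-constant entire curve $\mathbb{C}\to X$ has image contained in $\Xi$. The key claim is that
\[
R \ \subset\ \Delta_X\ \cup\ (\Xi\times\Xi),
\]
where $\Delta_X\subset X\times X$ denotes the diagonal. To prove this, I would take an arbitrary $(x,y)\in R$ together with a chain $f_1,\ldots,f_l:\mathbb{C}\to X$ realising $x\sim y$, and write $Z_i$ for the Zariski closure of $f_i(\mathbb{C})$. If every $f_i$ is constant, then each $Z_i$ is a single point and the non-empty consecutive intersections force $Z_1=\cdots=Z_l$, so $x=y$ and $(x,y)\in\Delta_X$. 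Otherwise, let $i_0$ be the smallest index with $f_{i_0}$ non-constant; then $Z_1,\ldots,Z_{i_0-1}$ are all equal to the single point $\{x\}$, and the non-empty intersection $Z_{i_0-1}\cap Z_{i_0}\neq\emptyset$ combined with $Z_{i_0}\subset\Xi$ forces $x\in\Xi$. A symmetric argument applied to the largest index $j_0$ with $f_{j_0}$ non-constant yields $y\in\Xi$, giving $(x,y)\in\Xi\times\Xi$.

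To conclude, since $X$ is irreducible of dimension $d\geq 1$ and $\Xi\subsetneq X$, one has $\dim\Delta_X=d<2d$ and $\dim(\Xi\times\Xi)\leq 2(d-1)<2d$, so $\Delta_X\cup(\Xi\times\Xi)$ is a proper Zariski closed subset of $X\times X$; hence so is the Zariski closure of $R$, and therefore $X$ is not $h$-special. I do not anticipate any substantive obstacle in this argument; the only care needed is the bookkeeping at the two ends of the chain when some of the first or last maps happen to be constant, which is precisely the role of the indices $i_0$ and $j_0$ above.
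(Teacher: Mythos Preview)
Your argument is correct and follows essentially the same approach as the paper's proof: both bound $R$ inside a proper Zariski closed subset built from the diagonal and the exceptional locus of the pseudo Brody hyperbolicity, then conclude by a dimension count. Your inclusion $R\subset\Delta_X\cup(\Xi\times\Xi)$ is in fact slightly sharper than the paper's $R\subset(X\times E)\cup(E\times X)\cup\Delta$, but the underlying idea is identical; just be explicit about the edge case $i_0=1$ (respectively $j_0=l$), where $x\in Z_1\subset\Xi$ holds directly without invoking $Z_{i_0-1}$.
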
 
 \begin{proof}
  we take a proper Zariski closed subset $E\subsetneqq X$ such that every non-constant holomorphic map $f:\mathbb C\to X$ satisfies $f(\mathbb C)\subset E$.
Let $x\in X$ satisfies $x\not\in E$.
Assume that $y\in X$ satsifies $x\sim y$.
Then there exists a sequence $f_1,\ldots,f_l:\mathbb C\to X$ such that 
$$x\in Z_1, Z_1\cap Z_2\not=\emptyset,\ldots,Z_{l-1}\cap Z_l\not=\emptyset, y\in Z_l,$$
where $Z_i\subset X$ is the Zariski closure of $f_i(\mathbb C)\subset X$.
Then $f_1$ is constant map and $Z_1=\{x\}$.
By $Z_1\cap Z_2\not=\emptyset$, we have $x\in Z_2$.
This yields that $f_2$ is constant and $Z_2=\{x\}$.
Similarly, we have $Z_3=\cdots=Z_l=\{ x\}$.
Hence $y=x$.
Thus we have $R\subset (X\times E)\cup (E\times X)\cup \Delta$, where $\Delta\subset X\times X$ is the diagonal.
Hence $R\subset X\times X$ is not Zariski dense. 
\end{proof}

\begin{cor}\label{cor:202304071}	 	
 	Let $X$ be a  complex   normal quasi-projective  
	variety and let $G$ be a semisimple algebraic group over $\bC$. 
 If  $\varrho:\pi_1(X)\to G(\bC)$ is a Zariski dense representation, then there  exist
 a  finite \'etale cover \(\nu:\widehat{X}\to X\), a birational and proper morphism \(\mu:\widehat{X}'\to \widehat{X}\), a dominant morphism $f:\widehat{X}'\to Y$  with connected general fibers, and a   big   representation \(\tau : \pi_{1}(Y) \to G(\bC)\)  such that
 \begin{itemize}
 	\item   \(f^{\ast} \tau = (\nu\circ\mu)^{\ast}\varrho\). 
 	\item There is a proper Zariski closed subset $Z\subsetneqq Y$ such that any closed   subvariety of $Y$ not contained in $Z$ is  of log general type.  
 	\item  $Y$ is pseudo Picard hyperbolic, and in particular pseudo Brody hyperbolic.  
 \end{itemize}  
Specifically,   $X$ is not weakly special and does not contain Zariski dense entire curves.  Furthermore, if $X$ is assumed to be smooth, then it cannot be $h$-special.
 \end{cor}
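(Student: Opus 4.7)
The plan is to apply the Kollár-type factorisation of Proposition~\ref{lem:kollar} to the Zariski-dense representation $\varrho$ in order to reduce to the big-representation setting, and then apply the hyperbolicity result Theorem~\ref{main2} to the factorised big representation $\tau$. Concretely, Proposition~\ref{lem:kollar} produces the diagram $\widetilde X\xrightarrow{\mu}\widehat X\xrightarrow{\nu}X$ together with an algebraic fiber space $f:\widetilde X\to Y$ (set $\widehat X':=\widetilde X$) and a big representation $\tau:\pi_1(Y)\to G(\bC)$ with $f^*\tau=(\nu\circ\mu)^*\varrho$. Before quoting Theorem~\ref{main2} I must check that $\tau$ remains Zariski dense: since $G$ is semisimple and hence connected, it has no proper finite-index algebraic subgroup, so the restriction of $\varrho$ to $\pi_1(\widehat X)$ is Zariski dense; as $\mu$ is a proper birational morphism between smooth varieties it is surjective on $\pi_1$, so $(\nu\circ\mu)^*\varrho$ is Zariski dense; and since $f$ has connected general fibers, $f_*:\pi_1(\widehat X')\to\pi_1(Y)$ is surjective, forcing $\tau$ to be Zariski dense.

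Applying Theorem~\ref{main2} (together with its pseudo Picard-hyperbolic strengthening in \cite{CDY22}) to the big and Zariski-dense $\tau$ on $Y$ produces a proper Zariski closed $Z\subsetneq Y$ such that every closed subvariety of $Y$ not contained in $Z$ is of log general type and such that $Y$ is pseudo Picard hyperbolic. In particular, taking $Y$ itself as a closed subvariety not contained in $Z$ shows $Y$ is of log general type; consequently $\dim Y>0$ (also forced because $\tau$ would otherwise factor through a trivial group). These are exactly the three bulleted conclusions.

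For the \emph{specifically} part, the dominant morphism $f:\widehat X'\to Y$ with connected general fibers onto the positive-dimensional variety $Y$ of log general type directly contradicts weak specialness of $X$, since $\widehat X'$ is a birational modification of the finite étale cover $\widehat X\to X$. For the absence of Zariski-dense entire curves, suppose $g:\bC\to X$ has Zariski-dense image. Using simple connectedness of $\bC$, lift $g$ through the finite étale $\nu$ to $\tilde g:\bC\to\widehat X$; this lift is again Zariski dense because $\nu$ is finite. Let $U\subset\widehat X$ be the Zariski-dense open on which $\mu$ is an isomorphism; then $\tilde g^{-1}(\widehat X\setminus U)$ is a discrete subset of $\bC$, and on its complement we obtain a lift $\tilde g':\bC\setminus S\to\widehat X'$. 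Using properness of $\mu$ (which bounds $\tilde g'$ locally near each point of $S$) together with the Riemann removable-singularity theorem, $\tilde g'$ extends to a holomorphic map $\bC\to\widehat X'$, which remains Zariski dense. Composing with $f$ yields a Zariski-dense entire curve $f\circ\tilde g':\bC\to Y$ (since $f$ is dominant, the image of a Zariski-dense subset stays Zariski dense), contradicting the pseudo Brody hyperbolicity of $Y$.

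Finally, for the $h$-special statement with $X$ smooth, I will chain the functorial properties of $h$-specialness: Lemma~\ref{lem:202304061} passes $h$-specialness to the smooth finite étale cover $\widehat X$, Lemma~\ref{lem:20230407} (which requires smoothness of $\widehat X$) passes it to the proper birational modification $\widehat X'$, and Lemma~\ref{lem:202304063} transports it along the dominant $f$ to yield $h$-specialness of $Y$; but this contradicts Lemma~\ref{lem:202304065} since $Y$ is positive-dimensional and pseudo Brody hyperbolic. The main technical obstacle I foresee is the lifting of the entire curve across $\mu$: one needs to argue carefully that the preimage of the indeterminacy locus is only a discrete set in $\bC$ (which follows from Zariski density of $\tilde g$) and that the partially-defined lift extends across it by properness — all other steps are formal consequences of the preceding lemmas.
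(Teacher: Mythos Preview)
Your proof is correct and follows essentially the same route as the paper: apply Proposition~\ref{lem:kollar} to obtain the factorisation, verify that $\tau$ remains Zariski dense, invoke Theorem~\ref{main2} for the bulleted conclusions, and then chain Lemmas~\ref{lem:202304061}, \ref{lem:20230407}, \ref{lem:202304063}, \ref{lem:202304065} for the $h$-special part. Two minor remarks: (i) you write that $\mu$ is ``between smooth varieties'', but $\widehat X$ is only normal in general (since $X$ is only assumed normal); surjectivity of $\mu_*$ on $\pi_1$ still holds because $\mu$ is proper birational onto a normal target, so the argument is unaffected; (ii) surjectivity of $f_*$ is not actually needed for the Zariski density of $\tau$, since $\tau(\pi_1(Y))$ automatically contains the image of $f^*\tau=(\nu\circ\mu)^*\varrho$ --- the paper argues this way directly. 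Your more detailed treatment of the entire-curve lifting across $\mu$ (via properness and Riemann removability) spells out what the paper leaves implicit.
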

 \begin{proof} 
 	By \cref{lem:kollar}, there exists a commutative diagram of quasi-projective varieties
 	\[
 	\begin{tikzcd}
 		\widehat{X}' \arrow[r, "\mu"] \arrow[d, "f"] & \widehat{X} \arrow[r, "\nu"] & X \\
 		Y
 	\end{tikzcd}
 	\]
 	where \(\nu\) is finite \'etale, \(\mu\) is birational and proper, and a big representation \(\tau : \pi_{1}(Y) \to G(\bC)\) such that \(f^{\ast} \tau = (\nu\circ\mu)^{\ast}\varrho\). Moreover, $\widehat{X}'$ and $Y$ are smooth. 
 	Since $\varrho$ is Zariski dense, so is $(\nu \circ \mu)^*\varrho$ for $(\nu \circ \mu)_*(\pi_1(\widehat{X}'))$ is a finite index subgroup in $\pi_1(X)$. Thus, since the image $\tau(\pi_1(Y))$ includes that of $f^*\tau=(\nu\circ\mu)^*\varrho$, it follows that $\tau$ is also Zariski dense. By \cref{main2}, $Y$ is of log general type and pseudo-Picard hyperbolic, implying that $X$ is not weakly special. If there is a Zariski dense entire curve $\gamma:\bC\to X$, it can be lifted to $\gamma':\bC\to \widehat{X}'$, from which $f\circ\gamma':\bC\to Y$ would be a Zariski dense entire curve due to $f$ being dominant. However, this leads to a contradiction, thereby indicating that $X$ does not admit Zariski dense entire curves.
 	
 	Assuming $X$ is smooth, let us suppose that $X$ is $h$-special. 
By \cref{lem:202304061}, $\widehat{X}$ is $h$-special.
Moreover $\widehat{X}$ is smooth.
Hence by \cref{lem:20230407}, $\widehat{X}'$ is $h$-special.
Hence by \cref{lem:202304063}, $Y$ is $h$-special.
This contradicts to \cref{lem:202304065}.
Hence $X$ is not $h$-special.
 \end{proof}

\begin{example}\label{ex:20230418}
There exists a singular, normal projective surface $X$ such that
\begin{itemize}
\item $X$ is not weakly special,
\item $X$ does not contain Zariski dense entire curve,
\item $X$ is $h$-special and $H$-special,
\item
there exists a proper birational modification $X'\to X$ such that $X'$ is neither $h$-special nor $H$-special.
\end{itemize}

The construction is as follows.
Let $C\subset \mathbb P^2$ be a smooth projective curve of genus greater than one.
Then $C$ is of general type and hyperbolic.
Let $p\in \mathbb P^3$ be a point and $\varphi:\mathbb P^3\backslash\{p\}\to\mathbb P^2$ be the projection from the point $p\in\mathbb P^3$.
Namely for each $y\in\mathbb P^2$, $\varphi^{-1}(y)\cup\{p\}\subset \mathbb P^3$ is a projective line  $\mathbb P^1\subset \mathbb P^3$ passing through $p\in\mathbb P^3$.
We denote this line by $\ell_y\in\mathbb P^3$.
Set $X=\varphi^{-1}(C)\cup\{p\}$, which is the cone over $C$.
Then $X$ is projective and normal.

Let $X'=\mathrm{Bl}_pX$ be the blow-up of $X$ at $p\in X$.
Then we have a morphism $\varphi':X'\to C$.
Since $C$ is of general type, this shows that $X$ is not weakly special.
If $X$ contains Zariski dense entire curve $f:\mathbb C\to X$, it lifts as a Zariski dense entire curve $f':\mathbb C \to X'$.
Hence $\varphi'\circ f':\mathbb C\to C$ becomes a non-constant holomorphic map, which is a contradiction.
Hence $X$ does not contain Zariski dense entire curve.

Note that $X=\bigcup_{y\in C}\ell_y$.
Let $x,x'\in X$ be two points.
We take $y,y'\in C$ such that $x\in\ell_{y}$ and $x'\in\ell_{y'}$.
Then we have $d_X(x,p)=d_X(x',p)=0$, where $d_X$ is the Kobayashi pseudo-distance on $X$.
Hence $d_X(x,x')=0$.
This shows that $X$ is $H$-special.
There exist entire curves $f:\mathbb C\to\ell_y$ and $f':\mathbb C\to \ell_{y'}$.
By $\ell_y\cap\ell_{y'}=\{p\}$, we conclude that $X$ is $h$-special.

Now the existence of the morphism $\varphi':X'\to C$ shows that $X'$ is not $H$-special.
Since $C$ is not $h$-special, \cref{lem:202304063} shows that $X'$ is not $h$-special.
\end{example}

We give another example of Brody special quasi-projective varieties.
\begin{lem}\label{lem:ZD}
	Let $\alpha:X\to \cA$ be  a  (possibly non-proper)  morphism  from a smooth quasi-projective variety $X$ to a semi-abelian variety $\cA$ with   $\overline{\kappa}(X)=0$. 
	Assume that $\dim X=\dim \alpha(X)$ and $\dim X>0$.
	Then $X$ admits a Zariski dense entire curve $\mathbb C\to X$.
	In particular, $X$ is $h$-special. 
\end{lem}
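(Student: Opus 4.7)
My strategy is to show that $X$ contains a Zariski open subset identified with the complement of a codimension-$\geq 2$ closed subset in a semi-abelian variety, and then to construct a Zariski dense entire curve in that complement by translating a generic one-parameter subgroup. First, I reduce to the dominant case. Let $Y := \overline{\alpha(X)} \subset \cA$. Since $\alpha : X \to Y$ is dominant and generically finite with $X$ smooth and $\overline{\kappa}(X) = 0$, Iitaka's inequality for generically finite morphisms (applied to smooth log compactifications) gives $\overline{\kappa}(Y') \leq 0$ for any smooth birational model $Y'$ of $Y$, while part (a) of \cref{prop:Koddimabb} forces $\overline{\kappa}(Y') \geq 0$. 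Hence $\overline{\kappa}(Y') = 0$, and the equality case of \cref{prop:Koddimabb} (a) shows that $Y$ is a translate of a semi-abelian subvariety of $\cA$. After translating in $\cA$, I may therefore assume $\alpha$ is dominant with $\dim X = \dim \cA$.

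Next I apply \cref{lem:Stein} to factor $\alpha = \beta \circ \tilde\alpha$ with $\tilde\alpha : X \to S$ an algebraic fiber space and $\beta : S \to \cA$ finite; since the general fibers of $\tilde\alpha$ are connected and zero-dimensional, $\tilde\alpha$ is birational. The crux of the argument, and the main obstacle, is to show that $\beta$ is \'etale. To this end I extend $\beta$ to a proper morphism $\bar\beta : \bar S \to \bar{\cA}$ of log-smooth compactifications, and use that $K_{\bar{\cA}} + D_{\cA} \sim 0$ on a suitable smooth equivariant compactification of the semi-abelian variety. The log ramification formula then yields $K_{\bar S} + D_S \equiv R^{\log}_{\bar\beta}$, where $R^{\log}_{\bar\beta}$ is the effective log ramification divisor. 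Since $\overline{\kappa}(X) = \overline{\kappa}(\tilde S) = 0$ by birational invariance, $R^{\log}_{\bar\beta}$ has Iitaka dimension zero, and a structural argument in the spirit of \cref{prop:Koddimabb} and the techniques of \cite{CDY25} underlying \cref{lem:abelian pi0} forces $R^{\log}_{\bar\beta} = 0$. By purity of the branch locus $\beta$ is then \'etale, so $S$ is itself a semi-abelian variety. Applying \cref{lem:abelian pi0} to the birational morphism $\tilde\alpha : X \to S$, I obtain a Zariski closed subset $Z \subset S$ of codimension $\geq 2$ such that $X$ contains $S \setminus Z$ as a Zariski open subvariety.

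Finally, I exhibit a Zariski dense entire curve in $S \setminus Z$. Let $\pi : \mathbb{C}^{g} \to S$ be the universal covering (with $g = \dim S$), and pick a generic vector $v \in \mathbb{C}^{g}$ so that the one-parameter subgroup $t \mapsto \pi(tv)$ is Zariski dense in $S$. For $a \in S$, consider the translated entire curve $f_{a}(t) := a + \pi(tv)$. The set of ``bad'' base points $a$ (those for which $f_{a}(\mathbb{C}) \cap Z \neq \emptyset$) is contained in the image of the holomorphic map $\mathbb{C} \times Z \to S$, $(t, z) \mapsto z - \pi(tv)$, whose source has complex dimension at most $\dim S - 1$. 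Hence a very general $a$ yields a Zariski dense entire curve $f_{a} : \mathbb{C} \to S \setminus Z$, which via the open embedding $S \setminus Z \hookrightarrow X$ from the previous step gives the desired Zariski dense entire curve in $X$. The remaining assertion that $X$ is $h$-special follows immediately from \cref{lem:20230406}.
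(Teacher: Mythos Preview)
Your overall architecture is reasonable and matches how one would unpack the reference the paper cites (\cite[Lemma~1.5]{CDY25}): reduce to the case where $X$ is birational onto a semi-abelian variety, invoke \cref{lem:abelian pi0} to get an open embedding $S\setminus Z\hookrightarrow X$ with $\mathrm{codim}\,Z\geq 2$, and then produce a Zariski dense one-parameter curve avoiding $Z$. The last step and the final appeal to \cref{lem:20230406} are fine.

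There is, however, a genuine gap at exactly the point you flag as the ``crux''. Two issues:

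\begin{itemize}
\item Your claim ``$\overline{\kappa}(X)=\overline{\kappa}(\tilde S)=0$ by birational invariance'' is not valid as stated: the logarithmic Kodaira dimension is \emph{not} a birational invariant of non-proper varieties (e.g.\ $\overline{\kappa}(\bA^1)=-\infty$ versus $\overline{\kappa}(\bC^*)=0$), and $\tilde\alpha:X\to S$ need not be proper since $\alpha$ is not assumed proper. What is true is the inequality $\overline{\kappa}(X)\geq \overline{\kappa}(S)$, coming from the fact that an SNC pair is log canonical; combined with $\overline{\kappa}(S)\geq 0$ this does give $\overline{\kappa}(S)=0$, but the justification needs to be stated correctly.
\item More seriously, from $\kappa(R^{\log}_{\bar\beta})=0$ you cannot conclude $R^{\log}_{\bar\beta}=0$ by a hand-wave. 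An effective divisor of Iitaka dimension zero need not vanish, and the phrase ``a structural argument in the spirit of \cref{prop:Koddimabb} and the techniques of \cite{CDY25}'' is not a proof. This step is precisely the content of Kawamata's characterization (in the log/semi-abelian setting) that a smooth quasi-projective variety with $\overline{\kappa}=0$ and maximal quasi-Albanese dimension has birational quasi-Albanese map; you either need to cite such a result precisely or supply the argument. A cleaner route is to bypass $S$ entirely: since $\alpha$ factors through the quasi-Albanese map $q:X\to\cA_X$ and $\dim\alpha(X)=\dim X$, the map $q$ is generically finite onto its image, and Kawamata's theorem then gives that $q$ is birational onto a semi-abelian subvariety; now apply \cref{lem:abelian pi0} to $q$.
\end{itemize}

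In short, your outline is correct but the step establishing étaleness of $\beta$ (equivalently, reducing to a birational map onto a semi-abelian variety) is asserted rather than proved, and this is exactly the substantive content being borrowed from \cite{CDY25}.
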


\begin{proof} 
For the existence of a Zariski dense entire curve $\mathbb C\to X$, we refer the readers to \cite[Lemma 1.5]{CDY25}.
	By \cref{lem:20230406}, $X$ is $h$-special.
\end{proof}

%\begin{rem}
%We expect that the above $Y$ should satisfy $\dim Y\leq \rank G$.  This is proved by Corlette-Simpson \cite{CS08} when $G=SL_2$ and $\varrho$ has quasi-unipotent monodromies at infinity.  It this is true, it will imply that a big representation $\varrho:\pi_1(X)\to G(\bC)$ exist only if $\dim X\leq \rank G$.
%\end{rem}

\section{Fundamental groups of special varieties}\label{sec:VN}
In this section we  prove  the linear version of \Cref{conj:revised2}. 
\begin{thm}[=\Cref{main5}]\label{thm:VN}
	Let $X$ be a   special  or $h$-special smooth quasi-projective variety.   Let  $\varrho:\pi_1(X)\to {\rm GL}_N(\bC)$ be a  linear representation.   
	Then the identity component of the Zariski closure of the image $\varrho(\pi_1(X))$  decomposes as a direct product $U\times T$, where $U$ is a  unipotent algebraic group, and $T$ is a commutative algebraic group. 
	In particular,  $\varrho(\pi_1(X)) $ is {virtually nilpotent}.  If $\varrho$ is reductive,  then  $\varrho(\pi_1(X)) $ is  {virtually abelian}.
\end{thm}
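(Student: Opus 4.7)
The plan is to divide the proof into two parts: first, showing that the reductive quotient of the Zariski closure $H := \overline{\varrho(\pi_1(X))}^{\mathrm{Zar}}$ is a torus, and second, upgrading the Mostow--Levi semidirect product $H^0 = U \rtimes L$ into a direct product. Here $U := R_u(H^0)$ denotes the unipotent radical and $L \subset H^0$ is a reductive Levi subgroup. Once both parts are established, $H^0 = U \times T$ with $T$ a torus, so $\varrho(\pi_1(X))$ is virtually nilpotent; and if $\varrho$ is reductive, then $U = \{e\}$ and we recover virtual abelianity.

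For the first part, I would consider the composition $\varrho^{\mathrm{red}}: \pi_1(X) \to H \to H/U$, whose image has Zariski closure $H/U$ with identity component $L$. Composing further with the adjoint quotient $L \to L/Z(L)^0 =: G^{\mathrm{ad}}$ and passing to the finite étale cover of $X$ corresponding to the preimage of $(G^{\mathrm{ad}})^0$ -- which remains special or $h$-special by \cref{special} and \cref{lem:202304061} -- yields a Zariski-dense representation into a semisimple group. If $G^{\mathrm{ad}}$ were positive-dimensional, \cref{cor:202304071} (itself a consequence of \cref{main2}) would force the cover of $X$ to be neither weakly special nor $h$-special, contradicting the hypothesis. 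Hence $L$ is a torus, which I denote $T$.

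For the second part, I would show by induction along the descending central series of $U$ that $T$ acts trivially on $U$; this reduces to proving $T$ acts trivially on $\mathfrak{u}^{\mathrm{ab}} := \mathrm{Lie}(U/[U,U])$, a $T$-module decomposing into characters $\chi: T \to \mathbb{G}_m$. Suppose $\chi$ were a non-trivial such weight: then $\chi \circ \varrho^{\mathrm{red}}: \pi_1(X) \to \mathbb{G}_m$ would have Zariski-dense image, factoring through the quasi-Albanese morphism $X \to \mathrm{Alb}(X)$ into a semi-abelian variety. To derive a contradiction, I would invoke Deligne's mixed Hodge theory (following Hain--Morgan) on the pronilpotent completion of $\pi_1(X)$: the weight filtration on $\mathfrak{u}^{\mathrm{ab}}$ is preserved by the $T$-action, the graded pieces carry pure Hodge structures, and their interaction with Nevanlinna-theoretic results on entire curves in semi-abelian varieties (of Bloch-Noguchi-Winkelmann type) combined with the special / $h$-special hypothesis would rule out any non-trivial $\chi$.

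The principal obstacle lies in the second part: bridging the algebraic fact that $T$ admits a non-trivial weight on $\mathfrak{u}^{\mathrm{ab}}$ to a concrete geometric contradiction requires the intricate interplay of MHS (which controls weights on the unipotent radical), Nevanlinna theory (which controls entire curves on semi-abelian targets like $\mathrm{Alb}(X)$), and the specialness or $h$-specialness assumption on $X$. The first part, by contrast, is a direct application of the hyperbolicity results \cref{main2}, packaged in \cref{cor:202304071}, together with the stability of specialness / $h$-specialness under finite étale covers and birational modifications established in \S 3.
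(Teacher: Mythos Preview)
Your first part is correct and matches the paper's argument: passing to the finite \'etale cover landing in $H^0$, quotienting to the semisimple part, and invoking \cref{cor:202304071} to force $H^0$ solvable. The paper phrases this via the radical $R(G_0)$ rather than the Levi decomposition, but the content is the same.

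The second part has a genuine gap. Your reduction to showing $T$ acts trivially on $\mathfrak{u}^{\mathrm{ab}}$ is valid (this is \cref{eqn:202210154} in the paper), but the mechanism you propose for the contradiction does not work as stated. Observing that a nontrivial weight $\chi$ gives a character $\chi\circ\varrho^{\mathrm{red}}:\pi_1(X)\to\bG_m$ factoring through the quasi-Albanese is true but harmless: such characters exist whenever $b_1(X)>0$ and do not by themselves contradict specialness. Your appeal to Hain--Morgan MHS on the pronilpotent completion is not the tool the paper uses, and it is unclear how it would interact with the semidirect (as opposed to nilpotent) structure here; the Malcev completion sees only the pronilpotent quotient and does not directly constrain how a torus acts by conjugation on the unipotent radical.

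What you are missing is the paper's central geometric input, \cref{pro:202210131}: the quasi-Albanese map $X\to A$ is $\pi_1$-\emph{exact} when $X$ is special or $h$-special. This is where specialness genuinely enters (via \cref{lem:3}, which uses Nevanlinna theory and orbifold-base arguments to control the ramification divisor). Exactness identifies $\pi_1(X)'$ with the image of $\pi_1(F)$ for a general fiber $F$, so the conjugation action of $T$ on $G'/G''$ factors through the action of $\pi_1(A)$ on a quotient $\Pi$ of $H_1(F;\bZ)$. That action is a sub-quotient of the monodromy of the local system $R^1 f_*\overline{\bQ}$, which underlies an admissible VMHS; Deligne's theorem then forces its algebraic monodromy to have unipotent identity component, whence $T\to\mathrm{GL}(G'/G'')$ is trivial. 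The paper packages the final algebraic step as \cref{claim:202210121} (triviality on $G'/G''$ suffices), which is a variant of your reduction tailored to this fibered setup.
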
 
It is indeed based on the following  theorem. 
\begin{thm}\label{thm:202210123}
Let $X$ be a special or $h$-special smooth quasi-projective variety.
Let $G$ be a connected, solvable algebraic group defined over $\mathbb C$.
Assume that there exists a Zariski dense representation $\varphi:\pi_1(X)\to G$.
Then $G$ decomposes as a direct product $U\times T$, where $U$ is a  unipotent algebraic group, and $T$ is a commutative algebraic group. 
In particular, $G$ is nilpotent.
\end{thm}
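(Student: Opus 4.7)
My strategy is to reduce to a Zariski dense representation into the affine group $\mathrm{Aff}(\mathbb{A}^1)=\mathbb{G}_a\rtimes\mathbb{G}_m$ and, from the resulting non-split rank-two local system on $X$, to extract an orbifold fibration of $X$ onto a curve of log general type, which will contradict either specialness or $h$-specialness.

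\emph{Reduction to the affine group.} By Mostow's decomposition of a connected solvable algebraic group in characteristic zero, write $G=U\rtimes T$ with $U=R_u(G)$ unipotent and $T$ a maximal torus. The desired conclusion $G=U\times T$ is equivalent to the triviality of the conjugation action of $T$ on $U$, and this in turn reduces to the triviality of the induced $T$-representation on the abelianisation $U^{\mathrm{ab}}=U/[U,U]$. Assuming this $T$-action is non-trivial, I decompose $U^{\mathrm{ab}}$ as a direct sum of $T$-weight spaces and pick a non-trivial weight $\chi\colon T\to\mathbb{G}_m$ whose eigenspace $V_\chi$ is non-zero. Projecting $U^{\mathrm{ab}}$ onto a one-dimensional quotient of $V_\chi$ and dividing $T$ by $\ker\chi$ produces a surjection $G\twoheadrightarrow\mathrm{Aff}(\mathbb{A}^1)$; composing with $\varphi$ yields a Zariski dense representation $\widetilde\varphi\colon\pi_1(X)\to\mathrm{Aff}(\mathbb{A}^1)$.

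\emph{A non-split rank-two local system.} The representation $\widetilde\varphi$ corresponds to a rank-two local system $\mathbb{V}$ on $X$ sitting in a short exact sequence
\begin{equation*}
0\to L_\chi\to\mathbb{V}\to\underline{\mathbb{C}}\to 0,
\end{equation*}
where $L_\chi$ is the rank-one local system associated to the character $\chi\colon\pi_1(X)\to\mathbb{C}^*$; the image of $\chi$ is Zariski dense in $\mathbb{G}_m$, so $\chi$ has infinite order. The Zariski density of $\widetilde\varphi$ forces this extension to be non-split (otherwise its image would lie inside the diagonal torus), giving a non-zero class $\zeta\in H^1(X,L_\chi)$. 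Invoking the theory of cohomology jumping loci (Beauville--Simpson--Arapura in the projective case, Budur--Wang in the quasi-projective case), non-vanishing of $H^1(X,L_\chi)$ at a character of infinite order yields an orbifold fibration $p\colon X\to(C,\Delta)$ onto a one-dimensional Campana orbifold $(C,\Delta)$ of log general type through which $\chi$ factors.

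\emph{Contradiction.} If $X$ is special, the existence of such an orbifold fibration with log-general-type base directly violates \cref{def:special}. If $X$ is $h$-special, then $(C,\Delta)$ being of log general type is Brody hyperbolic, so every entire curve $f\colon\mathbb{C}\to X$ maps to a constant under $p$; thus every chain $Z_1,\dots,Z_l$ of Zariski closures of entire curves stays inside a single fiber of $p$, and the equivalence relation $R$ of \cref{defn:20230407} lies in $X\times_C X$, which is a proper Zariski closed subset of $X\times X$ (since $p$ is dominant with positive-dimensional target), contradicting $h$-specialness.

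\emph{Main obstacle.} The principal difficulty is securing in the quasi-projective setting that the orbifold base $(C,\Delta)$ is genuinely of log general type, which underpins both kinds of contradiction. For projective $X$ this follows from Beauville--Simpson--Arapura; in the open case, one likely has to combine the Budur--Wang structure theorem for quasi-projective characteristic varieties with Deligne's mixed Hodge structure on $H^1(X,L_\chi)$ to control the orbifold multiplicities in $\Delta$ and ensure the positivity of $K_C+\Delta$.
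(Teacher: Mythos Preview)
Your strategy is genuinely different from the paper's and essentially follows Delzant's route for compact K\"ahler manifolds \cite{Del10} (which the paper cites as settling the compact case together with Campana \cite{Cam01}). The paper instead proves \cref{thm:202210123} via three different ingredients: (i) a group-theoretic criterion (\cref{claim:202210121}) reducing to the triviality of the $T$-action on $G'/G''$; (ii) $\pi_1$-exactness of the quasi-Albanese map of $X$ (\cref{pro:202210131}), whose proof uses the hyperbolicity results of Part~II to control the orbifold ramification divisor on the semi-abelian base; and (iii) Deligne's theorem that the radical of the Zariski closure of a monodromy group underlying a variation of mixed Hodge structure is unipotent. The paper never passes through cohomology jump loci or fibrations onto curves.

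Your reduction to a Zariski dense $\widetilde\varphi:\pi_1(X)\to\mathrm{Aff}(\mathbb A^1)$ and the resulting non-split extension $0\to L_\chi\to\mathbb V\to\underline{\mathbb C}\to 0$ with $\chi$ of infinite order are correct. The gap is exactly where you locate it, but it is more serious than a matter of ``controlling multiplicities''. What you cite as Budur--Wang only gives that the irreducible components of $\Sigma^1(X)$ are torsion-translated subtori; it does \emph{not} produce the orbifold fibration. For that you need the quasi-projective form of Arapura's theorem, which associates to a positive-dimensional component a map $f:X\to C$ onto a curve; but even granting this, several points remain open in your sketch. First, the component containing your non-torsion $\chi$ is in general a \emph{torsion translate} $\rho\cdot f^*\mathrm{Hom}(\pi_1(C),\mathbb C^*)$, so it is $\rho^{-1}\chi$, not $\chi$, that factors through $f_*$; you would need a finite \'etale cover to absorb $\rho$. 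Second, and more importantly for your two contradictions, you conflate two a priori different orbifold structures on $C$: the one $\Delta'$ coming from the jump-locus description (chosen so that $\chi$ factors through $\pi_1^{\mathrm{orb}}(C,\Delta')$), and Campana's orbifold base $\Delta(f)$. The contradiction with \emph{specialness} requires $\kappa(C,f)=\dim C$, i.e.\ log-bigness of $K_{\overline C}+\Delta(\overline f,D)$ for suitable models, which is a statement about $\Delta(f)$; hyperbolicity of $(C,\Delta')$ does not directly yield this without an inequality $\Delta'\leq\Delta(f)$ that you have not established. Third, your $h$-special contradiction needs every entire curve $g:\mathbb C\to X$ to have constant $p\circ g$; this is immediate if the underlying curve $C$ is Brody hyperbolic, but if only the orbifold $(C,\Delta)$ is hyperbolic you must check that $p\circ g$ is an \emph{orbifold} entire curve with respect to $\Delta(p)$ (handling in particular the exceptional locus of $p$ and the possibility $f(\mathbb C)\subset p^{-1}(c)$), which you do not address. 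These issues are likely surmountable, but they are precisely the quasi-projective difficulties that led the authors to abandon the Delzant approach and develop the Albanese/Deligne argument instead.
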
 
The proof consists of the following three inputs: 
\begin{itemize} 
\item
Algebraic property of solvable algebraic groups (\cref{claim:202210121}).
\item
$\pi_1$-exactness of quasi-Albanese map for  special or $h$-special smooth quasi-projective variety (\cref{pro:202210131}).
\item
Deligne's unipotency theorem for monodromy action.
\end{itemize}
\begin{rem}
	Note that when $X$ is compact K\"ahler,  \cref{thm:202210123} is proved by Campana  \cite{Cam01} and Delzant \cite{Del10}.  Our proof of \cref{thm:202210123} is inspired by \cite[\S 4]{Cam01}.
\end{rem}

The structure of this section is organized as follows. 
In \cref{sec:qA} we prove a structure theorem for the quasi-Albanese morphism of  $h$-special or weakly special smooth quasi-projective varieties. \cref{s1,s2,s3,s4} are devoted to the proof of \cref{thm:202210123}. In \cref{s5} we prove \cref{thm:VN}. The last section is  on some examples of $h$-special complex manifolds.

\subsection{Structure of the quasi-Albanese morphism}\label{sec:qA}

\begin{lem}\label{prop:factor}
	Let $X$ be an $h$-special or weakly special smooth quasi-projective variety.  Then  the quasi-Albanese morphism $\alpha:X\to \cA$ is dominant with connected general fibers. 
\end{lem}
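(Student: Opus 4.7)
\emph{Plan.} The approach is to apply the quasi-Stein factorization of \cref{lem:Stein} to $\alpha:X\to\cA$, writing it as $\alpha=\beta\circ\gamma$ where $\gamma:X\to S$ is an algebraic fiber space and $\beta:S\to\cA$ is finite. Setting $Y:=\beta(S)=\overline{\alpha(X)}$, the goal is to prove (i) $Y=\cA$ (dominance) and (ii) $\beta$ is birational; together these yield that $\beta$ is an isomorphism by Zariski's main theorem ($S$ normal, $\cA$ smooth), so $\gamma=\alpha$ is the desired algebraic fiber space with connected general fibers.

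For (i), I would invoke the Ueno--Kawamata structure theorem for subvarieties of semi-abelian varieties: the identity component $\cA_0$ of the stabilizer of $Y$ in $\cA$ is a semi-abelian subvariety with $Y=Y+\cA_0$, and the quotient $\overline{Y}:=Y/\cA_0\subset\cA/\cA_0$ is of log general type. The induced morphism $q:X\to Y\to\overline{Y}$ is dominant; if $\dim\overline{Y}>0$, I derive a contradiction in both cases. In the $h$-special case, \cref{lem:202304063} forces $\overline{Y}$ to be $h$-special; however, a positive-dimensional log general type subvariety of a semi-abelian variety is pseudo-Brody hyperbolic by the Noguchi--Winkelmann--Yamanoi theorem, contradicting \cref{lem:202304065}. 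In the weakly special case, applying the quasi-Stein factorization to $q$ yields an algebraic fiber space $X\to S'$ and a finite surjection $S'\to\overline{Y}$; since $\bar{\kappa}(S')\geq\bar{\kappa}(\overline{Y})=\dim S'$, the variety $S'$ is positive-dimensional normal quasi-projective of log general type, directly contradicting the weakly special hypothesis applied with trivial \'etale cover and trivial modification. Hence $\dim\overline{Y}=0$, so $Y$ is a translate of $\cA_0$; since the image of the quasi-Albanese generates $\cA$ as an algebraic group, $\cA_0=\cA$ and $Y=\cA$.

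For (ii), a smooth resolution $\tilde S\to S$ together with a compatible birational modification of $X$ inherits the $h$-special or weakly special property from $X$ via \cref{lem:202304061,lem:202304063,lem:20230407} (and the analogous stability of weakly special under dominant morphisms and proper birational modifications). A comparison of $H^0$ of logarithmic $1$-forms through $H^0(\cA,\Omega^1_\cA)\hookrightarrow H^0(\tilde S,\Omega^1(\log))\hookrightarrow H^0(X,\Omega^1(\log))$, whose outer composition is an isomorphism by the defining property of the quasi-Albanese $\cA=\mathrm{Alb}(X)$, yields $\mathrm{Alb}(\tilde S)=\cA$. Arguments paralleling (i), combined with the inequality $\bar{\kappa}(\tilde S)\geq\bar{\kappa}(\cA)=0$, give $\bar{\kappa}(\tilde S)=0$; Kawamata's characterization of (semi-)abelian varieties then forces the quasi-Albanese of $\tilde S$ to be birational. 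Hence $\tilde S\to\cA$ is birational, forcing $\deg\beta=1$. The main obstacle lies in step (i): the Ueno--Kawamata structure theorem and the Noguchi--Winkelmann--Yamanoi pseudo-Brody hyperbolicity are the crucial non-trivial geometric inputs for the $h$-special case, while step (ii) requires the additional input of Kawamata's characterization of (semi-)abelian varieties by vanishing logarithmic Kodaira dimension.
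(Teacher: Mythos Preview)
Your approach is essentially correct but takes a longer route than the paper's. The paper applies the quasi-Stein factorisation $X\xrightarrow{\beta} Y\xrightarrow{g}\cA$ (your $S=Y$) and then, rather than separating into dominance of the image and degree of the finite part, invokes Kawamata's structure theorem \cite[Theorem~27]{Kaw81} directly for the normal variety $Y$ finite over $\cA$: assuming $\bar\kappa(Y)>0$, one obtains a finite \'etale Galois cover $\widetilde Y\to Y$ which is a fiber bundle over a variety $Z$ with $\bar\kappa(Z)=\dim Z=\bar\kappa(Y)>0$ and $Z$ finite over a quotient $\cA/B$. In the $h$-special case $Z$ inherits $h$-specialness via \cref{lem:202304061,lem:202304063} and one contradicts the pseudo-Brody hyperbolicity of $Z$ coming from \cite[Theorem~C]{CDY25}; in the weakly special case one pulls the \'etale cover back to $X$ and obtains a dominant map with connected general fibers onto the log-general-type $Z$. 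Hence $\bar\kappa(Y)=0$, so by \cite[Theorem~26]{Kaw81} the variety $Y$ is itself semi-abelian, and the universal property of the quasi-Albanese forces $Y=\cA$. This disposes of your (i) and (ii) in one stroke.

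Your step (ii), while workable, essentially re-derives this conclusion for a resolution $\tilde S$, using the weaker Ueno--Kawamata statement for subvarieties in (i) and then compensating with extra structure on $\tilde S$. One small point to tighten: the $H^0(\Omega^1(\log))$ sandwich only gives $\dim\mathrm{Alb}(\tilde S)=\dim\cA$, hence that $\mathrm{Alb}(\tilde S)\to\cA$ is an isogeny; to conclude it is an isomorphism (so that birationality of $\tilde S\to\mathrm{Alb}(\tilde S)$ forces $\deg\beta=1$) you must also run the analogous sandwich on $H_1(-,\bZ)_{\mathrm{free}}$, which does go through since $X'\to\tilde S$ has connected general fibers.
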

\begin{proof} 
	%Denote by $T$ the Zariski closure of the image $\alpha(X)$.  Consider the composed map $\alpha\circ f$.  Then $T$ is the Zariski closure $\alpha\circ f$. On the other hand,  by the logarithmic Bloch-Ochiai theorem (see \cite[Theorem 6.2.1]{NW13}) $T$ is a translate of semi-abelian variety.    

We first assume that $X$ is $h$-special.
	Let $\beta:X\to Y$  and $g:Y\to \cA$ be the quasi-Stein factorisation of $\alpha$ in \cref{lem:Stein}. 
Then $Y$ is $h$-special (cf. \cref{lem:202304063}) and  $\overline{\kappa}(Y)\geq 0$. 
To show $\overline{\kappa}(Y)=0$, we assume contrary that $\overline{\kappa}(Y)>0$.  By a theorem of Kawamata \cite[Theorem 27]{Kaw81},  there are a semi-abelian variety $B\subset \cA$,   finite \'etale Galois covers $\widetilde{Y}\to Y$ and $\widetilde{B}\to B$, and a normal algebraic variety $Z$ such that
	\begin{itemize}
		\item there is a finite morphism from $Z$ to the quotient $\cA/B$;
		\item  $\widetilde{Y}$ is a fiber bundle over $Z$ with fibers $\widetilde{B}$;
		\item $\overline{\kappa}(Z)=\dim Z=\overline{\kappa}(Y)$.
	\end{itemize} 	  
By \cref{lem:202304061}, $\widetilde{Y}$ is $h$-special.
Hence by \cref{lem:202304063}, $Z$ is $h$-special.
Thus by \cref{lem:202304065}, $Z$ is not pseudo-Brody hyperbolic.
On the other hand, by \cite[Theorem C]{CDY25}, $Z$ is pseudo-Brody hyperbolic, a contradiction. 
Hence $\overline{\kappa}(Y)=0$.

	Assume now $X$ is weakly special.  Consider a connected component $\widetilde{X}$ of $X\times_Y\widetilde{Y}$. Then $\widetilde{X}\to  X$ is finite \'etale.  The composed morphism $\widetilde{X}\to Z$ of $\widetilde{X}\to \widetilde{Y}$ and $\widetilde{Y}\to Z$ is dominant with connected general fibers. We obtain a contradiction since $Z$ is of log general type and $X$ is weakly special. Hence $\overline{\kappa}(Y)=0$. 
	
	By \cite[Theorem 26]{Kaw81}, $Y$ is a semi-abelian variety and according to the universal property of quasi-Albanese morphism, $Y=\cA$. Hence $\alpha$ is dominant with  connected general fibers.  
\end{proof}

\subsection{A nilpotency condition for solvable linear group} \label{s1}
All the ground fields for algebraic groups and linear spaces are $\mathbb C$ in this subsection.

\begin{lem}  \label{lem:trivial morphism}
		Let $T$ be an algebraic torus and let $U$ be a uniponent group.
		Then every morphism $f:T\to U$ of algebraic groups is constant.
	\end{lem}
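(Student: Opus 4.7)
The plan is to show that the image $f(T)$ is the trivial group, from which the lemma follows immediately. Since $T$ is connected, the image $f(T)$ is a closed connected subgroup of $U$; as a closed subgroup of a unipotent group, $f(T)$ is unipotent. On the other hand, $f(T)$ is a quotient of the torus $T$ by the closed subgroup $\ker f$, so it is itself an algebraic torus. Thus it suffices to show that a group which is simultaneously a torus and a unipotent algebraic group must be trivial.

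To verify this last point, I would embed $U$ in some $\mathrm{GL}_N(\mathbb C)$ as a closed subgroup of strict upper-triangular unipotent matrices (using the Kolchin embedding theorem for unipotent groups). The composition $T\to U\hookrightarrow \mathrm{GL}_N(\mathbb C)$ is a representation of the torus $T$, hence completely reducible and diagonalizable after a change of basis. But every element of $U$ has all eigenvalues equal to $1$, so this diagonalized representation is forced to be the trivial one. Consequently the homomorphism $T\to U$ is constant.

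An alternative, more structural route — which is perhaps even shorter — is to use that the character group $X^*(U)$ of any unipotent group is trivial (a unipotent group has no non-trivial homomorphism to $\mathbb G_m$), whereas a torus $T$ is recovered from its character lattice $X^*(T)$ via $T=\mathrm{Hom}(X^*(T),\mathbb G_m)$. Any morphism $f:T\to U$ induces a map $f^*:X^*(U)\to X^*(T)$ which has trivial source, and the functorial reconstruction of $T'=f(T)$ from its characters then forces $f(T)$ to be the trivial subgroup of $U$.

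There is no genuine obstacle here: both approaches are essentially one-line consequences of the structure theory of linear algebraic groups. The only point worth stating carefully is the embedding or character-theoretic fact that tori and unipotent groups have trivial intersection, since everything else is formal. I would favor the first argument, as it stays elementary and avoids invoking the full machinery of character groups.
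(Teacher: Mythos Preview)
Your proposal is correct, but the route differs from the paper's. The paper argues via a central series $\{e\}=U_0\subset U_1\subset\cdots\subset U_n=U$ with successive quotients $\mathbb G_a$: if $f(T)\not\subset\{e\}$, one extracts a nontrivial homomorphism $\mathbb G_m\to\mathbb G_a$, and rules this out by observing that every root of unity must lie in its zero set, which is therefore infinite and hence all of $\mathbb G_m$. Your first argument instead invokes complete reducibility of torus representations together with a Kolchin-type embedding of $U$ into strictly upper-triangular matrices; this is more conceptual and arguably cleaner, at the cost of importing slightly more structure theory. One small remark on your second (character-lattice) argument: the statement ``$f^*:X^*(U)\to X^*(T)$ has trivial source'' is not actually the fact you use. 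What you need is that $f(T)$, being unipotent, has trivial character lattice, and hence---since it is also a torus---must be trivial. With that rephrasing the argument is fine.
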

\begin{proof}
		Since $U$ is unipotent, we have a sequence $\{e\}=U_0\subset U_1\subset \cdots\subset U_n=U$ of normal subgroups of $U$ such that $U_k/U_{k-1}=\mathbb G_a$, where $\mathbb G_a$ is the additive group.
		To show $f(T)\subset U_0=\{e\}$, suppose contrary $f(T)\not\subset U_0$.
		Then we may take the largest $k$ such that $f(T)\not\subset U_k$.
		Then $k<n$ and $f(T)\subset U_{k+1}$.
		Thus we get a non-trivial morphism $T\to U_{k+1}/U_k=\mathbb G_a$.
		By taking a suitable subgroup $\mathbb G_m\subset T$, we get a non-trivial morphism $g:\mathbb G_m\to \mathbb G_a$. 
		But this is impossible.
		Indeed let $\mu=\cup_n\mu_n$, where $\mu_n=\{ a\in \mathbb C^*;a^n=1\}$.
		Then $|\mu|=\infty$.
		On the other hand, for $a\in \mu_n$, we have $ng(a)=g(a^n)=g(1)=0$, hence $g(a)=0$.
		Thus $\mu\subset g^{-1}(0)$.
		This is impossible since we are assuming that $g$ is non-constant. 
		\end{proof}

\begin{lem}\label{lem:20221012}
Let $T$ be an algebraic torus.
Let $0\to L'\to L\to L''\to 0$ be an exact sequence of vector spaces with equivariant $T$-actions.
If $L'$ and $L''$ have trivial $T$-actions, then $L$ has also a trivial $T$-action.
\end{lem}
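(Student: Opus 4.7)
The plan is to exploit the fact that an algebraic torus $T$ is linearly reductive, so every finite-dimensional rational representation of $T$ decomposes as a direct sum of its weight spaces with respect to the character lattice $X^\ast(T)$. I would apply this weight-space decomposition to $L$, writing $L=\bigoplus_{\chi\in X^\ast(T)}L_\chi$, and then use the hypotheses on $L'$ and $L''$ to force every nontrivial weight space of $L$ to vanish.

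More precisely, the triviality of the $T$-action on $L'$ means that $L'$ is contained in the zero-weight subspace $L_0\subset L$. On the other hand, for every character $\chi\neq 0$, the composition $L_\chi\hookrightarrow L\twoheadrightarrow L''$ is a $T$-equivariant map from a pure weight-$\chi$ representation to the trivial representation $L''$, hence it must be zero by Schur/orthogonality of characters. This forces $L_\chi\subset L'\cap L_\chi\subset L_0\cap L_\chi=0$ whenever $\chi\neq 0$, so $L=L_0$ and the $T$-action on $L$ is trivial.

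An equivalent formulation, which may be cleaner to write, is to invoke directly the complete reducibility of $T$-representations: the given short exact sequence admits a $T$-equivariant splitting $L\cong L'\oplus L''$, and the right-hand side carries the trivial action, hence so does $L$. No step here looks like a serious obstacle; the whole content is the well-known fact that tori are linearly reductive in characteristic zero, which I would either cite from a standard reference or establish in one line via the weight-space decomposition as above.
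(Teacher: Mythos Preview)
Your argument is correct. Both the weight-space decomposition and the equivariant splitting are valid proofs; the content is precisely the linear reductivity of tori in characteristic zero.

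The paper takes a different route. Rather than invoking complete reducibility, it fixes a non-equivariant vector-space splitting $L=L'\oplus L''$ and computes directly that $(\varphi(t)-\mathrm{id}_L)^2=0$ for every $t\in T$: the first application of $\varphi(t)-\mathrm{id}_L$ sends any vector into $L'$ (since the action on $L''$ is trivial), and the second kills it (since the action on $L'$ is trivial). Thus $\varphi$ factors through a unipotent subgroup of $\mathrm{GL}(L)$, and the paper then appeals to its earlier lemma that every algebraic-group morphism from a torus to a unipotent group is constant. Your approach is more conceptual and cites a standard structural fact; the paper's approach is more hands-on and stays self-contained, reusing the ``no nontrivial map $T\to U$'' lemma that it has already established and will use again later. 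Either is perfectly adequate here.
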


\begin{proof}
Let $\varphi:T\to \mathrm{GL}(L)$ be the induced morphism of algebraic groups.
We take a (non-canonical) splitting $L=L'\oplus L''$ of vector spaces.
Let $t\in T$.
For $(v',0)\in L'$, we have $(\varphi(t)-\mathrm{id}_L)v'=0$, for $T$ acts trivially on $L'\subset L$.
For $(0,v'')\in L''$, we have $(\varphi(t)-\mathrm{id}_L)v''\in L'$.
Hence for $(v',v'')\in L$ and $t\in T$, we have 
$$(\varphi(t)-\mathrm{id}_L)^2\cdot (v',v'')=(\varphi(t)-\mathrm{id}_L)\cdot (u,0)=0,$$ where $u=(\varphi(t)-\mathrm{id}_L)v''\in L'$.
Hence $\varphi:T\to \mathrm{GL}(L)$ factors through the unipotent group $U\subset \mathrm{GL}(L)$.
Since the map $T\to U$ is trivial (cf. \cref{lem:trivial morphism}), 
$L$ has a trivial $T$-action.
\end{proof}

Let $G$ be a connected, solvable linear group.
We have an exact sequence 
\begin{equation}\label{eqn:20230321}
1\to U\to G\to T\to 1,
\end{equation}
where $U=R_u(G)$ is the unipotent radical and $T\subset G$ is a maximal torus.
Then $T$ acts on $U/U'$ by the conjugate.
The following lemma is from \cite[Lemma 1.8]{AN99}. 

\begin{lem}\label{eqn:202210154}
If $T$ acts trivially on $U/U'$, then $G$ decomposes as a direct product $G \cong U \times T$.
In particular, $G$ is nilpotent. 
\end{lem}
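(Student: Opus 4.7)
The plan is to reduce the statement to showing that the conjugation $T$-action on $U$ is trivial, pass to Lie algebras, propagate triviality of the action along the lower central series, and bootstrap with \cref{lem:20221012}. In characteristic zero any connected solvable algebraic group admits a Levi--Mostow decomposition, so the chosen maximal torus $T \subset G$ splits the sequence \eqref{eqn:20230321} and yields $G \cong U \rtimes T$; the claim $G \cong U \times T$ is then equivalent to the conjugation action of $T$ on $U$ being trivial. Since $U$ is unipotent, $\exp \colon \mathfrak{u} \to U$ is a $T$-equivariant algebraic isomorphism that converts conjugation into the adjoint representation, so it suffices to show that $T$ acts trivially on the linear $T$-module $\mathfrak{u}$.

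Next I would propagate triviality along the lower central series $\mathfrak{u} = \mathfrak{u}_1 \supset \mathfrak{u}_2 = [\mathfrak{u},\mathfrak{u}] \supset \cdots$, whose terms are $T$-stable ideals. Each graded piece $\mathfrak{u}_i/\mathfrak{u}_{i+1}$ is a linear $T$-module canonically identified, as a $T$-module, with $\Gamma_i U/\Gamma_{i+1} U$; in particular $\mathfrak{u}_1/\mathfrak{u}_2 \cong U/U'$ carries the trivial $T$-action by hypothesis. I claim by induction on $i$ that the same is true for every $\mathfrak{u}_i/\mathfrak{u}_{i+1}$. Indeed, the Lie bracket is $T$-equivariant and, using $[\mathfrak{u}_j, \mathfrak{u}_k] \subset \mathfrak{u}_{j+k}$, it descends to a $T$-equivariant bilinear map
\begin{equation*}
\mathfrak{u}/\mathfrak{u}_2 \,\times\, \mathfrak{u}_i/\mathfrak{u}_{i+1} \longrightarrow \mathfrak{u}_{i+1}/\mathfrak{u}_{i+2}
\end{equation*}
whose image generates the target, by the very definition of the lower central series. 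Since $T$ acts trivially on both factors on the left by the inductive hypothesis, it acts trivially on the entire target.

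Finally I would iterate \cref{lem:20221012} on the $T$-equivariant short exact sequences
\begin{equation*}
0 \to \mathfrak{u}_{i+1} \to \mathfrak{u}_i \to \mathfrak{u}_i/\mathfrak{u}_{i+1} \to 0,
\end{equation*}
starting from the last nonzero term of the lower central series (on which $T$ acts trivially) and working outward: at each step both the subspace and the quotient carry trivial $T$-actions, so the middle does too. This gives a trivial $T$-action on $\mathfrak{u} = \mathfrak{u}_1$, hence on $U$, and therefore $G = U \times T$, which is nilpotent.

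The main subtlety is the inductive step just described: one must check that the Lie bracket really descends to a well-defined $T$-equivariant bilinear map on the graded pieces of the lower central series and remains surjective onto the next piece. On the group side the same content can be extracted from the commutator identity $[xy,z] = x[y,z]x^{-1}[x,z]$ combined with $[\Gamma_j U, \Gamma_k U] \subset \Gamma_{j+k} U$; once the descent is in place, the rest of the argument is formal.
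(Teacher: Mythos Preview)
Your proof is correct, but it takes a somewhat different route from the paper's. Both arguments begin identically: reduce to showing that the conjugation action of $T$ on $U$ is trivial, pass to the adjoint action on $\mathfrak{u}=\mathrm{Lie}(U)$, and use that $T$ acts trivially on $\mathfrak{u}/[\mathfrak{u},\mathfrak{u}]$. From there the paper proceeds in one stroke: it observes that the subgroup
\[
S=\{\sigma\in\mathrm{Aut}(\mathfrak{u}) : (\mathrm{id}-\sigma)\,\mathfrak{u}\subset[\mathfrak{u},\mathfrak{u}]\}
\]
consists of unipotent elements (this is exactly the fact underlying your induction, packaged as a single statement), so the image of $T\to\mathrm{Aut}(\mathfrak{u})$ lands in a unipotent group, and \cref{lem:trivial morphism} kills it immediately. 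Your approach instead propagates triviality along the graded pieces of the lower central series via the bracket map and then reassembles $\mathfrak{u}$ using \cref{lem:20221012}. The paper's argument is shorter and avoids the bookkeeping of the inductive descent and the iterated application of \cref{lem:20221012}; your argument is more explicit and self-contained, in that it does not rely on recognizing $S$ as a unipotent algebraic subgroup. Both are valid, and yours has the minor advantage of making the role of the lower central series fully transparent.
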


\begin{proof}
By $T\subset G$, the conjugate yields a $T$-action on $U$.
We shall show that this $T$-action is trivial.
To show this, we set $N=\mathrm{Lie}(U)$.
Then the $T$-action yields $\alpha:T\to \mathrm{Aut}(N)$.
We set 
$$S=\{ \sigma\in \mathrm{Aut}(N); (\mathrm{id}_N-\sigma)N\subset [N,N]\}.$$
Then the elements of $S$ are unipotent.
Since $T$ acts trivially on $U/U'$, we have $\alpha(T)\subset S$.
Hence $\alpha$ is trivial (cf \cref{lem:trivial morphism}).
Hence $T$ acts trivially on $U$.
Thus the elements of $T$ and $U$ commute.
Thus $G=U\times T$.
Since $U$ is nilpotent, $G$ is nilpotent.
\end{proof}

Since $T$ is commutative, we have 
$G'\subset U$, where $G'=[G,G]$ is the commutator subgroup.
Hence we have
$$
1\to U/G'\to G/G'\to T\to 1. 
$$
Since $G/G'$ is commutative and $U/G'$ is unipotent, we have $G/G'=(U/G')\times T$. 
By
$$
1\to G'/G''\to G/G''\to G/G'\to 1,
$$
$G/G'$ acts on $G'/G''$ by the conjugate.
By $T\subset (U/G')\times T=G/G'$, we get $T$-action on $G'/G''$.

\begin{lem}\label{claim:202210121}
Assume $T$ acts trivially on $G'/G''$.
Then $G$ decomposes as a direct product $G \cong U \times T$.
In particular, $G$ is nilpotent. 
\end{lem}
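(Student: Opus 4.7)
My plan is to deduce this from the preceding \cref{eqn:202210154}: it suffices to show that the hypothesis ``$T$ acts trivially on $G'/G''$'' forces ``$T$ acts trivially on $U/U'$''. The whole argument is then a diagram chase combined with the vector-space lemma \cref{lem:20221012}.

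The starting observation is that, since $T$ is commutative, $G' \subset U$, and taking commutators once more gives $G'' = [G',G'] \subset [U,U] = U'$; simultaneously $U' \subset G'$. Hence we have the chain $U' \subset G' \subset U$, which yields the short exact sequence of commutative connected unipotent algebraic groups
\begin{equation*}
1 \to G'/U' \to U/U' \to U/G' \to 1.
\end{equation*}
All three terms are vector spaces (commutative connected unipotent algebraic groups over $\mathbb{C}$), and each carries an equivariant $T$-action induced by the conjugation action of $T$ on $U$.

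Next I verify that $T$ acts trivially on the outer terms. For the quotient $U/G'$, the abelianization $G/G' \cong (U/G') \times T$ is commutative, so conjugation by $T$ on $U/G'$ is trivial. For $G'/U'$, the surjection $G'/G'' \twoheadrightarrow G'/U'$ (which comes from $G''\subset U'$) is $T$-equivariant, and by hypothesis $T$ acts trivially on $G'/G''$, so $T$ acts trivially on $G'/U'$ as well.

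With both $G'/U'$ and $U/G'$ having trivial $T$-action, \cref{lem:20221012} applied to the displayed exact sequence gives that $T$ acts trivially on $U/U'$. Then \cref{eqn:202210154} yields $G \cong U \times T$, and in particular $G$ is nilpotent, finishing the proof. The argument is essentially formal once one notices the nesting $U' \subset G' \subset U$ and $G'' \subset U'$; I do not anticipate any obstacle beyond writing this chain down carefully.
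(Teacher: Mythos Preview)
Your proof is correct and follows essentially the same route as the paper's: establish the chain $G'' \subset U' \subset G' \subset U$, show that $T$ acts trivially on the two outer pieces $G'/U'$ and $U/G'$ of the exact sequence $1 \to G'/U' \to U/U' \to U/G' \to 1$, then apply \cref{lem:20221012} and conclude via \cref{eqn:202210154}. The only cosmetic difference is that the paper passes explicitly to Lie algebras before invoking \cref{lem:20221012}, whereas you identify the commutative unipotent groups directly with vector spaces; over $\mathbb{C}$ this is the same thing.
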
 

\begin{proof}
By $G'\subset U$, we have $G''\subset U'$, where $U'=[U,U]$.
Then we get
$$
1\to U'/G''\to G'/G''\to G'/U'\to 1.
$$
Hence $T$ acts trivially on $G'/U'$.
By $G/G'\simeq T\times (U/G')$, we note that $T$ acts trivially on $U/G'$.
Now we have the following exact sequence
$$
1\to G'/U'\to U/U'\to U/G'\to 1.
$$
This induces
$$
0\to \mathrm{Lie}(G'/U')\to \mathrm{Lie}(U/U')\to \mathrm{Lie}(U/G')\to 0.
$$
We know that both $T\to \mathrm{GL}(\mathrm{Lie}(G'/U'))$ and $T\to \mathrm{GL}(\mathrm{Lie}(U/G'))$ are trivial.
Hence by Lemma \ref{lem:20221012}, $T$ acts trivially on $\mathrm{Lie}(U/U')$.
Hence $T$ acts trivially on $U/U'$.
By \cref{eqn:202210154}, $G$ decomposes as a direct product $G \cong U \times T$.
\end{proof}

\subsection{A lemma on Kodaira dimension of fibration}
 In this section, we prove an extension to the quasi-projective setting of \cite[Theorem 1.8]{Cam04}, and derive a useful consequence concerning algebraic fiber spaces of special varieties over semi-abelian varieties (see Corollary~\ref{cor:specialalg}).

We refer to  \Cref{sec:spechspecial} and to \cite{Cam11} for several definitions concerning orbifold bases. For our purposes, we only need to recall the following.
\medskip

Let \(f : X \to Y\) be a dominant morphism with general connected fibers between smooth quasi-projective varieties. Let $\Delta \subset Y$ be a prime divisor. We denote by $J(\Delta )$ the set of all prime divisors $D$ in $X$ such that $\overline{f(D)}=\Delta$. 
We write 
\begin{equation}\label{eqn:div}
f^*(\Delta )=\sum_{j\in J(\Delta )}m_jD_j+E_{\Delta}
\end{equation}
where $E_{\Delta}$ is an exceptional divisor of $f$, i.e., the codimension of $f(E_{\Delta})$ is greater than one.
We put 
$$m_{\Delta}=
\begin{cases}
\min_{j\in J(\Delta )}m_j & \text{if $J(\Delta)\not=\emptyset$} \\
+\infty & \text{if $J(\Delta)=\emptyset$}
\end{cases}
$$
Let $I(p)$ be the set of all $\Delta$ such that $m_{\Delta}\geq 2$. The {\em orbifold ramification divisor} of \(f\) on \(Y\) is $\Delta (f)=\sum_{\Delta \in I(p)} \left( 1 - \frac{1}{m_{\Delta}} \right)\Delta$. Consider now \(\overline{f} : \overline{X} \to \overline{Y}\), any extension of \(f\) between log-smooth compactifications, and let \(D := \overline{X} - X\) and \(G := \overline{Y} - Y\). Then the orbifold base of \(\bar{f}: (\overline{X}| D)\to \overline{Y}\)   is given by \(\Delta(\bar{f}, D) := G + \overline{\Delta(f)}\) (where \(\overline{\Delta(f)}\) is the \(\mathbb{Q}\)-divisor on \(\overline{Y}\) obtained from \(\Delta(f)\) by taking the closure of every component).

\begin{proposition}\label{prop:KDF}  Let $\{f_i:X_i\to Y_i\}_{i=1,2}$   be  dominant morphisms between smooth quasi-projective varieties with connected general fibers, and let \(\{\overline{f}_{i}:\overline{X}_{i}\to \overline{Y}_{i}\}_{i=1,2}\) be two extensions to log-smooth projective compactifications. We assume that we have two compatible commutative diagrams
	\begin{equation*}
		\begin{tikzcd}
			X_2 \arrow[r, "u_\circ"]\arrow[d, "f_2"] & X_1\arrow[d, "f_1"] \\
			Y_2 \arrow[r, "v_\circ"] & Y_1 
		\end{tikzcd}
		\hspace*{2em}
		\text{and}
		\hspace*{2em}
		\begin{tikzcd}
			\overline{X}_2 \arrow[r, "u"]\arrow[d, "\overline{f}_2"] & \overline{X}_1\arrow[d, "\overline{f}_1"] \\
			\overline{Y}_2 \arrow[r, "v"] & \overline{Y}_1 
		\end{tikzcd}
	\end{equation*}
	where $u_\circ$ and $v_\circ$ are  proper birational morphisms. Denoting by \(D_{i} := \overline{X}_{i} - X_{i}\), we let \((\overline{Y}_{i}| \Delta(\bar{f}_{i}, D_{i}))\) (resp. \((Y| \Delta(f_{i}))\) be the orbifold base of the fibration \(\overline{f}_{i} : (\overline{X}_{i}| D_{i}) \to \overline{Y}_{i}\) (resp. of the fibration \(f_{i} : X_{i} \to Y_{i}\)). Then, one has
	\begin{thmlist}
	\item the orbifold Kodaira dimension satisfy \(\kappa(\overline{Y}_{2}| \Delta(\bar{f}_{2}, D_{2})) \leq \kappa(\overline{Y}_{1}| \Delta(\bar{f}_{1}, D_{1}))\);  
\item If \(\bar{\kappa}(Y_{1}) \geq 0\) and $\bar{\kappa}(\overline{Y}_1-{\rm Supp}\, \Delta(\bar{f}_1,D_1))=\dim Y_1$, then   
 $K_{\overline{Y}_1}+\Delta(\bar{f}_1,D_1)$ and  $ K_{\overline{Y}_2}+\Delta(\bar{f}_2,D_2)$ are both big line bundles.  
 In particular,  $\kappa(Y_1,f_1)=\dim Y_1$, where $\kappa(Y_1,f_1)$ is the Kodaira dimension of   $f_1:X_1\to Y_1$ defined in  \eqref{eq:Kodaira2}.
	\end{thmlist}
\end{proposition}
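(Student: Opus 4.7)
My plan is to prove (i) by a quasi-projective adaptation of Campana's monotonicity of the orbifold Kodaira dimension under birational refinement \cite[Theorem 1.8]{Cam04}, with the boundary divisor absorbed into $\Delta(\bar f,D)$. Concretely, I will establish the divisorial inequality
\[
K_{\overline{Y}_2}+\Delta(\bar f_2,D_2) \;\leq\; v^{*}\bigl(K_{\overline{Y}_1}+\Delta(\bar f_1,D_1)\bigr) + F
\]
for some effective $v$-exceptional $\mathbb{Q}$-divisor $F$, by a coefficient-by-coefficient comparison: for a non-$v$-exceptional prime $\tilde\Delta\subset\overline{Y}_2$ with $v(\tilde\Delta)=\Delta$, the compatibility $v\circ\bar f_2=\bar f_1\circ u$ together with \eqref{eqn:div} shows that the strict transform of each non-$\bar f_1$-exceptional component $D_j$ of $\bar f_1^{*}\Delta$ is a non-$\bar f_2$-exceptional component of $\bar f_2^{*}\tilde\Delta$ of multiplicity $m_j$, yielding $m_{\tilde\Delta}(\bar f_2)\leq m_\Delta(\bar f_1)$; the contribution of $v$-exceptional primes to $\Delta(\bar f_2,D_2)$ is absorbed into $F$. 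Passing to global sections and using $v_{*}\mathcal{O}(mF)=\mathcal{O}_{\overline{Y}_1}$ for any $v$-exceptional effective $F$ then gives the Kodaira-dimension inequality.

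For (ii), the first step is to show bigness of $K_{\overline{Y}_1}+\Delta(\bar f_1,D_1)$ by a short numerical identity. Every coefficient of $\Delta(\bar f_1,D_1)$ is at least $\tfrac{1}{2}$ (the boundary $G_1$ with coefficient $1$, each $\overline{\Delta}$ with coefficient $1-1/m_\Delta\ge\tfrac{1}{2}$ since $m_\Delta\ge 2$), which yields
\begin{align*}
2\bigl(K_{\overline{Y}_1}+\Delta(\bar f_1,D_1)\bigr)
&= \bigl(K_{\overline{Y}_1}+\Supp\Delta(\bar f_1,D_1)\bigr) + (K_{\overline{Y}_1}+G_1) \\
&\qquad + \textstyle\sum_{\Delta}\bigl(1-\tfrac{2}{m_\Delta}\bigr)\,\overline{\Delta}.
\end{align*}
The first summand is big by the hypothesis $\bar\kappa(\overline{Y}_1-\Supp\Delta(\bar f_1,D_1))=\dim Y_1$, the second is pseudo-effective by $\bar\kappa(Y_1)\ge 0$, and the third is effective since $m_\Delta\ge 2$; therefore the sum is big, so $K_{\overline{Y}_1}+\Delta(\bar f_1,D_1)$ is big.

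To propagate bigness from $\overline{Y}_1$ to $\overline{Y}_2$, I would verify the same two hypotheses on the $\overline{Y}_2$-side and rerun the identity. Birational invariance of the log Kodaira dimension gives $\bar\kappa(Y_2)=\bar\kappa(Y_1)\ge 0$. For $\bar\kappa(\overline{Y}_2-\Supp\Delta(\bar f_2,D_2))=\dim Y_2$, I would upgrade the multiplicity analysis of (i) to the equality $m_{\tilde\Delta}(\bar f_2)=m_\Delta(\bar f_1)$ on non-$v$-exceptional primes, the additional inequality $m_{\tilde\Delta}(\bar f_2)\ge m_\Delta(\bar f_1)$ coming from the observation that every $u$-exceptional divisor $G\subset X_2$ with $\bar f_2(G)=\tilde\Delta$ contributes multiplicity at least $m_\Delta(\bar f_1)$ to $\bar f_2^{*}\tilde\Delta$ (via the explicit pullback formula derived from $v\circ\bar f_2=\bar f_1\circ u$). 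Consequently $v_\circ^{-1}(\Supp\Delta(f_1))\subseteq\Supp\Delta(f_2)$ up to a subset of codimension $\ge 2$, so $W_2=Y_2-\Supp\Delta(f_2)$ contains a dense open $W_2'$ that maps properly birationally onto $W_1=Y_1-\Supp\Delta(f_1)$. Birational invariance of $\bar\kappa$ and its insensitivity to codimension-$\ge 2$ removals then give $\bar\kappa(W_2)=\bar\kappa(W_1)=\dim Y_1$. The last claim $\kappa(Y_1,f_1)=\dim Y_1$ follows from \eqref{eq:Kodaira2}, part (i), and the bigness established on every birational refinement. I expect the main technical obstacle to be the multiplicity bookkeeping of (i) and its sharpening to the equality used in (ii); once that is in place, the bigness step reduces to the short numerical identity above.
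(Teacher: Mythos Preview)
Your argument for (i) is correct and essentially coincides with the paper's, which invokes \cite[Lemme~4.6]{Cam11} to obtain $v^{*}\Delta(\bar f_1,D_1)=\Delta(\bar f_2,D_2)+E$ with $E$ effective $v$-exceptional, and then compares Kodaira dimensions. Your numerical identity
\[
2\bigl(K_{\overline{Y}_1}+\Delta(\bar f_1,D_1)\bigr)=\bigl(K_{\overline{Y}_1}+\Supp\Delta(\bar f_1,D_1)\bigr)+(K_{\overline{Y}_1}+G_1)+\textstyle\sum_\Delta(1-\tfrac{2}{m_\Delta})\overline{\Delta}
\]
for the bigness on $\overline{Y}_1$ is also correct and is in fact a clean repackaging of the same ingredients the paper uses.

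The gap is in the propagation to $\overline{Y}_2$. Your multiplicity equality $m_{\tilde\Delta}(\bar f_2)=m_\Delta(\bar f_1)$ is only established for \emph{non-$v$-exceptional} primes $\tilde\Delta$; it says nothing about $v$-exceptional divisors. The ``Consequently $v_\circ^{-1}(\Supp\Delta(f_1))\subseteq\Supp\Delta(f_2)$'' step is therefore unjustified, and indeed fails: a $v$-exceptional prime $P$ with $v(P)\subset\Supp\Delta(f_1)$ can perfectly well satisfy $m_P(\bar f_2)=1$ (Campana's formula only forces the coefficient of $P$ in the exceptional correction $E$ to absorb the pullback, not $m_P\geq 2$). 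Without this inclusion you cannot conclude $W_2\subset v_\circ^{-1}(W_1)$, and the opposite inclusion $W_2'\subset W_2$ gives the inequality $\bar\kappa(W_2)\leq\bar\kappa(W_2')$ in the wrong direction. So the hypothesis $\bar\kappa(\overline{Y}_2-\Supp\Delta(\bar f_2,D_2))=\dim Y_2$ is not available, and your identity cannot be rerun on $\overline{Y}_2$.

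The paper circumvents this by never trying to control $\Supp\Delta(\bar f_2,D_2)$ directly. Instead it works with the \emph{pullback} support: writing $\Delta_1''$ for the positive part of $v^{*}\Delta_1-G_2$, one has $\bar\kappa(\overline{Y}_2-G_2-|\Delta_1''|)=\bar\kappa(v^{-1}(Y_1^\circ))=\dim Y_2$ for free, hence $K_{\overline{Y}_2}+G_2+|\Delta_1''|$ is big. The passage from $|\Delta_1''|=|\Delta_1'|+F_2$ (with $F_2$ $v$-exceptional) to the strict transform $|\Delta_1'|\leq\Delta_2$ then requires absorbing $F_2$ into a large multiple of the effective $v$-exceptional divisor $E_b$ coming from $K_{\overline{Y}_2}+G_2=v^{*}(K_{\overline{Y}_1}+G_1)+E_b$, followed by a rescaling. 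This extra maneuver is precisely what compensates for the possible failure of $v$-exceptional primes to lie in $\Supp\Delta(f_2)$.
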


\begin{proof}[Proof of Proposition~\ref{prop:KDF}]
Let us prove the first statement. According to \cite[Lemme 4.6]{Cam11},  
we have
\begin{align} \label{eq:Cam}
	 v^*\Delta(\bar{f}_{1}, D_{1})=\Delta(\bar{f}_{2}, D_{2})+E,
\end{align} 
where $E$ is some effective $\bQ$-divisor which is $v$-exceptional.  

Since \(\overline{Y}_{1}\) is smooth, hence has terminal singularities, we have therefore
\begin{equation} \label{eq:difforbcanonical}
K_{\overline{Y}_2}+\Delta(\bar{f}_{2}, D_{2})+E=v^*(K_{\overline{Y}_1}+\Delta(\bar{f}_{1}, D_{1}))+F,
\end{equation}
where  $F$ is also an effective  divisor.
Therefore, we have
\begin{align*}
	\kappa(K_{\overline{Y}_1}+\Delta(\bar{f}_{1}, D_{1})) & = \kappa(v^*(K_{\overline{Y}_1}+\Delta(\bar{f}_{1}, D_{1}))) =\kappa(v^*(K_{\overline{Y}_1}+\Delta(\bar{f}_{1}, D_{1}))+F)\\  &=\kappa(K_{\overline{Y}_2}+\Delta(\bar{f}_{2}, D_{2})+E)\geq \kappa(K_{\overline{Y}_2}+\Delta(\bar{f}_{2}, D_{2})).
\end{align*} 
The first claim follows.
\medskip

	We will prove the second claim.  We let \(G_{i} := \overline{Y}_{i} - Y_{i}\). By assumption, this is a simple normal crossing divisor, and we can write
\begin{align}\label{eq:canonical formula}
	 	K_{\overline{Y}_{2}} + G_{2} = v^{\ast}(K_{\overline{Y}_{1}} + G_{1}) +E_b
\end{align} 
	where \(E_b\) is a  \(v\)-exceptional effective divisor. We note that if $E_0$ is a prime \(v\)-exceptional   divisor that is not contained in $G_2$, we have $E_b\geq E_0$. We note that 
	$\Delta(\bar{f}_i,D_i)=G_i+\Delta_i$, where $\Delta_i$ is an effective divisor such that each of its irreducible component is not contained in $G_i$.      
Let $\Delta_1'$ be the strict transform of $\Delta_1$. It follows that  
 $
\Delta_2\geq \Delta_1'.
$  

We denote by $\Delta_1''$ the positive part of $v^*\Delta_1-G_2$. Then    
$|\Delta_1''|=|\Delta_1'|+F_2 
$ 
by \eqref{eq:Cam}, where $F_2$ is  a \(v\)-exceptional   effective divisor such that each of its irreducible component is not contained in $G_2$.   Here $|\Delta_1'|$ denotes the support of $\Delta'_1$.   
 
 Write $Y_1^\circ:=\overline{Y}_1-{\rm Supp}\, \Delta(\bar{f}_1,D_1)$ and $Y_2^\circ:=v^{-1}(Y_1^\circ)$.  By our assumption, $\bar{\kappa}(Y_2^\circ)=\bar{\kappa}(Y_1^\circ)=\dim Y_2$.  Therefore, 
 $
K_{\overline{Y}_1}+|\Delta_1|+G_1
$  and  $
 K_{\overline{Y}_2}+|\Delta_1''|+G_2
 $ are both   big line bundles by \cite[Lemma 3]{NWY13}.
\begin{claim}\label{claim:big}
	The line bundle 
 $
K_{\overline{Y}_2}+ G_2+\Delta_2
$  is big.
\end{claim}
\begin{proof}
	Let $m\in \bZ_{>0}$ sufficiently large such that $m E_b\geq F_2$.  By \eqref{eq:canonical formula}, we have
	$$
	\kappa(	K_{\overline{Y}_{2}} + G_{2}-E_b)\geq 0. 
	$$
	Therefore,
	$$
	\kappa(	m(K_{\overline{Y}_{2}} + G_{2}-E_b)+ K_{\overline{Y}_2}+|\Delta_1''|+G_2 )=\dim Y_2.
	$$
	Note that
	\begin{align*}
		m(K_{\overline{Y}_{2}} + G_{2}-E_b)+ K_{\overline{Y}_2}+|\Delta_1''|+G_2\leq 	(m+1)(K_{\overline{Y}_{2}} + G_2)  +|\Delta_1'|.
	\end{align*}
Hence 
$$
\kappa(		(m+1)(K_{\overline{Y}_{2}} + G_2)  +|\Delta_1'|)=\dim Y_2.
$$
We take $\ep\in \bQ_{>0}$ small enough such that  $\ep(m+1)<1$ and $\ep|\Delta_1'|\leq \Delta_1'$. Since $K_{\overline{Y}_{2}} + G_2$ is $\bQ$-effective, it follows that
$$
\ep((m+1)(K_{\overline{Y}_{2}} + G_2)  +|\Delta_1'|)\leq K_{\overline{Y}_{2}} + G_2+\Delta_1'\leq K_{\overline{Y}_{2}} + G_2+\Delta_2.
$$
Therefore, $K_{\overline{Y}_{2}} + G_2+\Delta_2$ is big. 
\end{proof} 
\Cref{claim:big} implies that  $ K_{\overline{Y}_2}+\Delta(\bar{f}_2,D_2)$ is big. We thus proved the second claim. The last claim follows from the very definition of Kodaira dimension of fibration in \eqref{eq:Kodaira2}. 
\end{proof}
 
 We have the following consequence. 
\begin{cor} \label{cor:specialalg}
	Let \(X\) be a special smooth quasi-projective variety, let \(A\) be a semi-abelian variety, and let \(p : X \to A\) be a dominant morphism with connected general fibers. Let \(\Delta(p)\) be the orbifold base divisor on \(A\) defined at the beginning of this section, and let \(\mathrm{St}(\Delta(p))\) be its stabilizer under the action of \(A\). If \(\mathrm{dim}\; \mathrm{St}(\Delta(p)) = 0\), then \(X\) is not special. 
\end{cor}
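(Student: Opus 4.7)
The plan is to argue by contradiction: under the assumption $\dim \mathrm{St}(\Delta(p)) = 0$, the given fibration $p : X \to A$ itself witnesses $\kappa(A, p) = \dim A$, which is incompatible with Definition \ref{def:special} of specialness.

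First, I would fix compatible log-smooth compactifications $\overline{X} = X \sqcup D$ and $\overline{A} = A \sqcup G$ so that $p$ extends to a morphism $\overline{p} : \overline{X} \to \overline{A}$. By the definition recalled at the beginning of this section, the orbifold base divisor is $\Delta(\overline{p}, D) = G + \overline{\Delta(p)}$, and its support in $\overline{A}$ equals $G \cup |\overline{\Delta(p)}|$, so the open complement is $\overline{A} - \Supp \Delta(\overline{p},D) = A - |\Delta(p)|$. Note that $\bar\kappa(A) = 0$ by Proposition \ref{prop:Koddimabb}(a). The hypothesis $\dim \mathrm{St}(\Delta(p)) = 0$ also forces the stabilizer of the reduced divisor $|\Delta(p)|$ to be finite: its identity component must fix each irreducible component of $|\Delta(p)|$ individually and therefore lies in $\mathrm{St}(\Delta(p))^\circ = 0$. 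By the classical theorem of Kawamata on divisors with finite stabilizer in a semi-abelian variety (which is the content of \cite[Lemma 3]{NWY13}, already invoked in the proof of Proposition \ref{prop:KDF}(ii)), this implies $\bar\kappa(A - |\Delta(p)|) = \dim A$, equivalently that $K_{\overline{A}} + G + |\overline{\Delta(p)}|$ is big.

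Next, I would apply Proposition \ref{prop:KDF}(ii) with $f_1 = f_2 = p$, taking $X_2 = X_1 = \overline{X}$, $Y_2 = Y_1 = \overline{A}$, and all intermediate morphisms to be the identity, so that both required compatibility diagrams trivially commute. The two hypotheses $\bar\kappa(A) \geq 0$ and $\bar\kappa(\overline{A} - \Supp \Delta(\overline{p}, D)) = \dim A$ are exactly what was checked above. The conclusion of Proposition \ref{prop:KDF}(ii), combined with \eqref{eq:Kodaira2}, gives $\kappa(A, p) = \dim A$. Taking the trivial birational modification $\widehat{X} = X$ in Definition \ref{def:special}, the fibration $p : X \to A$ now provides an algebraic fiber space with maximal orbifold Kodaira dimension, contradicting the specialness of $X$.

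The only non-routine ingredient is the passage from the stabilizer condition $\dim \mathrm{St}(\Delta(p)) = 0$ to the log-general-type condition $\bar\kappa(A - |\Delta(p)|) = \dim A$, which rests on Kawamata's classification of invariant divisors on semi-abelian varieties; once this black-box input is in place, the argument is purely a bookkeeping exercise in orbifold bases and a direct appeal to Proposition \ref{prop:KDF}(ii).
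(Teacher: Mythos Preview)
Your proof is correct and follows essentially the same route as the paper's: compactify, identify $\overline{A} - |\Delta(\bar p, D)| = A - |\Delta(p)|$, use the stabilizer hypothesis to conclude this complement is of log general type, then invoke Proposition~\ref{prop:KDF}(ii) to get $\kappa(A,p)=\dim A$. The only slip is bibliographic: the passage from finite stabilizer to log general type is \cite[Proposition~5.6.21]{NW13} (which is what the paper cites), not \cite[Lemma~3]{NWY13}, which is a separate bigness lemma used internally in the proof of Proposition~\ref{prop:KDF}.
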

\begin{proof}
	Let $\overline{A}$  be an equivariant smooth compactification of $A$ such that $G:=\overline{A}-A$ is a simple normal crossing divisor such that $K_{\overline{A}}+G=\cO_{\overline{A}}$. We take a smooth projective compactification $\overline{X}$ of $X$   such that $D:=\overline{X}-X$ is a simple normal crossing divisor and $p$ extends to a morphism $\bar{p}:\overline{X}\to \overline{A}$.   Denote by $\Delta(\bar{p},D)$ the orbifold divisor of $\bar{p}:(\overline{X}|D)\to \overline{A}$.   Let $\Delta(p)$ be the orbifold divisor of $p:X\to A$ defined at the beginning of this subsection. Then we have
	$$
	\Delta(\bar{p},D) =G+\overline{\Delta(p)}.
	$$
Hence $\overline{A}-|\Delta(\bar{p},D)|=A-|\Delta(p)|$.
	
Since \(\mathrm{dim}\; \mathrm{St}(\Delta(p)) = 0\),	by \cite[Proposition 5.6.21]{NW13}, the variety $A-|\Delta(p)|$, hence  $\overline{A}-|\Delta(\bar{p},D)|$ is of log general type.  Therefore,  conditions in  \Cref{prop:KDF} are fulfilled. This implies that \(\kappa( {A},  {p}) = \dim A\), so \(X\) is not special by \cref{def:special}.
\end{proof}

\subsection{$\pi_1$-exactness of quasi-Albanese morphisms}\label{s2}
Let $X$ and $Y$ be smooth quasi-projective varieties. 
We say that a morphism $p:X\to Y$ is an algebraic fiber space 
if 
\begin{itemize}
\item
$p$ is dominant, and
\item
$p$ has general connected fibers. 
\end{itemize}

Let $p:X\to Y$ be an algebraic fiber space, and let $F$ be a general fiber of $p$ (assumed to be smooth). Then there is a natural sequence of morphisms of groups:
\begin{equation}\label{eqn:gp}
\pi _1(F)\overset{i_*}{\to}\pi _1(X)\overset{p_*}{\to} \pi _1(Y).
\end{equation}
Note that $p_*$ is surjective (because of the second condition of the algebraic fiber spaces) and that the image of $i_*$ is contained in the kernel of $p_*$.
\begin{dfn}
For an algebraic fiber space $p:X\to Y$, we say that $p$ is $\pi _1$-exact if the above sequence \eqref{eqn:gp} is exact.
\end{dfn}

By Proposition \ref{prop:factor}, if $X$ is $h$-special, then the quasi-Albanese map $X\to A(X)$ is an algebraic fiber space.

 In this subsection, we prove the following proposition.
 \begin{proposition}\label{pro:202210131}
Let $X$ be a $h$-special or special smooth quasi-projective variety. 
Let $p:X\to A$ be an algebraic fiber space where $A$ is a semi-abelian variety. 
Then $p$ is $\pi _1$-exact. 
In particular, the quasi-Albanese map $a_X:X\to A(X)$ is $\pi _1$-exact.
\end{proposition}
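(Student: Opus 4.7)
The plan is to prove a covering-theoretic version of $\pi_1$-exactness and then upgrade to the classical statement. I claim that $\pi_1$-exactness of $p : X \to A$ is equivalent (up to a residual-finiteness step, addressed at the end) to the following geometric assertion $(\ast)$: every connected finite étale cover $\nu : \hat{X} \to X$ that is trivial over the general fiber $F$ of $p$ (equivalently, $\nu_* \pi_1(\hat{X}) \supseteq N := i_* \pi_1(F)$) arises as a pullback $\hat{X} \cong X \times_A \hat{A}$ for some finite étale cover $\hat{A} \to A$.

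To prove $(\ast)$, I argue by contradiction: let $\nu : \hat{X} \to X$ be a connected finite étale cover trivial over $F$ that is not pulled back from $A$. By Theorem \ref{special} or Lemma \ref{lem:202304061}, $\hat{X}$ is again special or $h$-special, so Proposition \ref{prop:factor} gives that the quasi-Albanese $a_{\hat{X}} : \hat{X} \to A(\hat{X})$ is an algebraic fiber space. The universal property of the quasi-Albanese yields a surjective homomorphism $A(\hat{X}) \to A$ of semi-abelian varieties (surjective because $\hat{X} \to A$ is dominant and the image is closed under the group structure); letting $K^\circ$ denote the identity component of its kernel, the quotient $\hat{A} := A(\hat{X})/K^\circ$ is a semi-abelian variety and $\hat{A} \to A$ is a finite étale isogeny.

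The composition $\hat{X} \to A(\hat{X}) \to \hat{A} \to A$ has connected fibers at each stage, so by uniqueness in Lemma \ref{lem:Stein}, $\hat{X} \to \hat{A} \to A$ is the quasi-Stein factorization of $\hat{X} \to A$. Since $\nu$ is trivial over $F$, the general fiber of $\hat{X} \to A$ is a disjoint union of $d = [\hat{X}:X]$ copies of $F$; combined with connectedness of fibers of $\hat{X} \to \hat{A}$, this forces $[\hat{A}:A] = d$. Both $\hat{X}$ and $X \times_A \hat{A}$ are then connected finite étale covers of $X$ of the same degree $d$ (connectedness of $X \times_A \hat{A}$ following from surjectivity of $p_*$), and the natural morphism $\hat{X} \to X \times_A \hat{A}$ over $X$, being a surjection between connected étale covers of equal degree over $X$, is an isomorphism. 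Thus $\hat{X}$ is pulled back from $\hat{A} \to A$, contradicting the choice of $\nu$, which establishes $(\ast)$.

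The main remaining obstacle is the upgrade from $(\ast)$ to classical $\pi_1$-exactness. The statement $(\ast)$ says that every finite quotient of $\pi_1(X)/N$ factors through $\pi_1(A)$, i.e., the kernel of $\pi_1(X)/N \to \pi_1(A)$ lies in the profinite radical of the source; obtaining honest $\pi_1$-exactness thus requires the residual finiteness of this quotient. This I would handle either by exploiting the specific geometric structure of $X$ (and an inductive argument on $\dim A$), or—sufficient for the intended application to finitely generated linear representations in Theorem \ref{thm:202210123}—automatically via Malcev's theorem, since any such representation trivial on $N$ has residually finite image and therefore kills the profinite radical.
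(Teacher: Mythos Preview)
Your argument for $(\ast)$ is correct and elegant: passing to the quasi-Albanese of the cover $\hat X$ and using Proposition~\ref{prop:factor} to identify the quasi-Stein factorization of $\hat X\to A$ is a neat way to produce the intermediate isogeny $\hat A\to A$, and the degree count forcing $\hat X\cong X\times_A\hat A$ goes through. This is a genuinely different route from the paper, which instead analyses the orbifold ramification divisor $\Delta(p)$: the paper shows (Lemma~\ref{lem:3}) that for special or $h$-special $X$ the stabilizer of $\Delta(p)$ in $A$ is positive-dimensional whenever $\Delta(p)\neq\emptyset$, then iteratively quotients by this stabilizer (Lemma~\ref{lem:5}) until $\Delta(p)=\emptyset$, at which point a direct Van Kampen argument (Lemma~\ref{lem:4}) gives honest $\pi_1$-exactness.

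The genuine gap is exactly the one you flag: $(\ast)$ only says that $K/N$ (with $K=\ker p_*$) lies in the finite residual of $\pi_1(X)/N$, and you give no argument that this residual is trivial. Your option~(a) is too vague to assess. Your option~(b) is on the right track but is not automatic: in the proof of Theorem~\ref{thm:202210123} one needs that $\varphi(N)$ is Zariski dense in $G'$ (Claim~\ref{claim:202210153}), and deducing this from $(\ast)$ requires an extra step---roughly, if $H\subset G'$ is the Zariski closure of $\varphi(N)$ then the induced map $\pi_1(X)/N\to G/H$ has residually finite image by Malcev, whence $K/N$ dies there and $\varphi(K)\subset H$, forcing $H=G'$. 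This works, but it means you have proved a weaker statement than the proposition claims and must rewrite the downstream application to use it. The paper's orbifold approach avoids this entirely: the Van Kampen computation in Lemma~\ref{lem:4} yields exactness on the nose, with no residual-finiteness hypothesis needed.
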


The rest of this subsection is devoted to the proof of \cref{pro:202210131}, for which we need four lemmas. 
\begin{lem}\label{lem:3}
Let $X$ be a $h$-special or special quasi-projective variety. 
Let $A$ be a semi-abelian variety and let $p:X\to A$ be an algebraic fiber space. If $\Delta (p)\not= \emptyset$, then $\dim \mathrm{St}(\Delta (p))> 0$, where $ \mathrm{St}(\Delta (p))=\{ a\in A; \ a+\Delta (p)=\Delta (p)\}$.
\end{lem}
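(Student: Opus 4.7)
We argue by contradiction: suppose $\dim \mathrm{St}(\Delta(p)) = 0$. When $X$ is special, this is immediate from \cref{cor:specialalg} applied to the dominant fiber space $p: X \to A$: since $\Delta(p) \neq 0$ and its stabilizer is finite, the corollary asserts that $X$ is not special, contradicting our assumption.

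Now assume $X$ is $h$-special. The strategy is to show that $(p \times p)(R)$ is contained in a proper Zariski closed subset of $A \times A$; since $p \times p: X \times X \to A \times A$ is dominant, this will contradict the Zariski density of $R$. The core ingredient is the following \emph{orbifold pseudo-Brody hyperbolicity} of $(A, \Delta(p))$: every non-constant orbifold entire curve $g: \bC \to (A, \Delta(p))$ has image $g(\bC) \subset |\Delta(p)|$. The hypothesis $\dim \mathrm{St}(\Delta(p)) = 0$ ensures (via the semi-abelian criterion related to \cite[Proposition 5.6.21]{NW13}) that $|\Delta(p)|$ is a big divisor on $A$, so $A - |\Delta(p)|$ is of log general type. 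Combined with the Bloch-Ochiai theorem (by which the Zariski closure $\overline{g(\bC)}$ is a translate $a + B$ of a semi-abelian subvariety) and the orbifold second main theorem of Noguchi-Winkelmann-Yamanoi \cite{NWY13} applied with the multiplicities $m_\Delta \geq 2$, one obtains a contradiction on any stratum $a + B$ where $\Delta(p)|_{a+B}$ has zero-dimensional stabilizer in $B$ and is non-trivial. The remaining cases are handled by an iterative descent: quotient by the identity component of $\mathrm{St}_{B}(\Delta(p)|_{a+B})$ and iterate, which eventually either produces the same contradiction on a smaller semi-abelian stratum or forces $a + B \subset |\Delta(p)|$.

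Granting the claim, every entire curve $f: \bC \to X$ induces, by the very definition of the multiplicity divisor, an orbifold entire curve $g := p \circ f: \bC \to (A, \Delta(p))$. Hence either $g$ is constant (so $f(\bC)$ lies in a single fiber of $p$) or $p(f(\bC)) \subset |\Delta(p)|$. For $(x, y) \in R$ joined by a chain $f_1, \ldots, f_l$ with Zariski closures $Z_1, \ldots, Z_l$, a case analysis on each $f_i$, using $Z_i \cap Z_{i+1} \neq \emptyset$, propagates along the chain: either all $p \circ f_i$ are constant with the same value (whence $p(x) = p(y)$), or some $f_j$ has $p \circ f_j$ non-constant and the chain intersections force $p(x), p(y) \in |\Delta(p)|$. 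Therefore
\[
(p \times p)(R) \;\subset\; \Delta_A \,\cup\, \bigl(|\Delta(p)| \times |\Delta(p)|\bigr),
\]
where $\Delta_A \subset A \times A$ denotes the diagonal; this is a proper Zariski closed subset of $A \times A$, yielding the desired contradiction.

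The main technical difficulty is the orbifold pseudo-Brody hyperbolicity claim, which combines Bloch-Ochiai with the NWY orbifold second main theorem and a Kawamata-style descent via the stabilizer. An alternative (cleaner) route is to construct a Kawamata cover $V \to A$ matching the orbifold multiplicities of $\Delta(p)$ and to apply \cite[Theorem C]{CDY25} to deduce pseudo-Brody hyperbolicity of $V$ (of log general type and finite over the semi-abelian $A$); the conclusion then descends to $(A, \Delta(p))$ via the finite map $V \to A$.
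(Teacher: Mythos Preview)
Your approach is essentially the paper's, and the special case is handled identically via \cref{cor:specialalg}. For the $h$-special case, however, there is a genuine technical gap in the step where you assert that $g := p \circ f$ is an orbifold entire curve for $(A, \Delta(p))$ ``by the very definition of the multiplicity divisor''. This is false as stated: recall that $p^*(\Delta) = \sum_j m_j D_j + E_\Delta$ where $E_\Delta$ is the $p$-exceptional part; the multiplicity condition $\mathrm{ord}_z (p\circ f)^*\Delta \geq m_\Delta$ is only guaranteed at points $z$ where $f(z)$ lies on one of the non-exceptional components $D_j$, and can fail when $f(z) \in E_\Delta$. The paper handles this by letting $Z \subset A$ be the Zariski closure of $p(E)$ (with $E$ the exceptional locus of $p$), noting $\mathrm{codim}(Z,A) \geq 2$, and invoking a version of the orbifold hyperbolicity statement (\cite[Proposition~10.9]{CDY25}) that tolerates failure of the multiplicity condition along the codimension-$\geq 2$ set $Z \cap \Delta(p)$. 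This yields a proper closed $\Xi \subsetneq A$ with the weaker conclusion $p\circ f(\bC) \subset \Xi \cup |\Delta(p)|$; your stronger claim that the image lands in $|\Delta(p)|$ alone is not justified by the sketch you give. With the corrected target $\Xi \cup |\Delta(p)|$ your chain-and-dichotomy argument still works (the final containment becomes $(p\times p)(R)\subset \Delta_A \cup (\Xi\cup|\Delta(p)|)^2$, which suffices), though the paper streamlines it: rather than split into cases, it chooses $x,y$ with $x\sim y$, $p(x)\neq p(y)$, and $p(x)\notin \Xi\cup|\Delta(p)|$, and then shows inductively that every $Z_i$ in the chain lies in the fiber $p^{-1}(p(x))$, forcing $p(y)=p(x)$.
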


\begin{proof}
Suppose $\Delta (p)\not= \emptyset$.
To show $\dim \mathrm{St}(\Delta (p))> 0$, we assume contrary $\dim \mathrm{St}(\Delta (p))= 0$.

The case where \(X\) is special has been dealt with in \cref{cor:specialalg}, which implies readily that $\dim \mathrm{St}(\Delta (p))> 0$ in this situation. 

Next, we consider the case that $X$ is $h$-special.
Let $E\subset X$ be the exceptional divisor of $p$.
Let $Z\subset A$ be the Zariski closure of $p(E)$.
Then $\mathrm{codim}(Z,A)\geq 2$.
We apply \cite[Proposition 10.9]{CDY25} for the divisor $\Delta(p)\subset A$ and $Z\cap\Delta(p)$ to get a proper Zariski closed set $\Xi\subsetneqq A$.
Let $f:\mathbb C\to X$ be a holomorphic map such that $p\circ f$ is non-constant.
We first show that $p\circ f(\mathbb C)\subset \Xi\cup \Delta(p)$.
Indeed, we have $\mathrm{ord}_y(p\circ f)^{*}(\Delta(p))\geq 2$ for all $y\in (p\circ  f)^{-1}(\Delta(p)\backslash Z)$.
Let $g:\mathbb C\to A$ be defined by $g(z)=p\circ f(e^z)$.
Then $g$ has essential singularity over $\infty$.
Thus by \cite[Proposition 10.9]{CDY25}, we have $g(\mathbb C)\subset \Xi\cup \Delta(p)$.
Thus $p\circ f(\mathbb C)\subset \Xi\cup \Delta(p)$.

Now since $X$ is $h$-special, we may take two points $x,y\in X$ so that $x\sim y$, $p(x)\not=p(y)$ and $p(x)\not\in \Xi\cup \Delta(p)$.
Then there exists a sequence $f_1,\ldots,f_l:\mathbb C\to X$ such that 
$$x\in F_1, F_1\cap F_2\not=\emptyset,\ldots,F_{l-1}\cap F_l\not=\emptyset, y\in F_l,$$
where $F_i\subset X$ is the Zariski closure of $f_i(\mathbb C)\subset X$.
By $p(x)\not\in \Xi\cup \Delta(p)$, we have $p\circ f_1(\mathbb C)\not\subset  \Xi\cup \Delta(p)$.
Hence $p\circ f_1$ is constant and $F_1\subset  p^{-1}(p(x))$.
By $F_1\cap F_2\not=\emptyset$, we have $p(x)\in p(F_2)$.
Hence $p\circ f_2(\mathbb C)\not\subset  \Xi\cup \Delta(p)$.
Hence $p\circ f_2$ is constant and $F_2\subset  p^{-1}(p(x))$.
Similary, we get $F_i\subset  p^{-1}(p(x))$ for all $i=1,2,\ldots,l$ inductively.
In particular, we have $F_l\subset  p^{-1}(p(x))$.
Thus $p(y)=p(x)$.
This is a contradiction.
Thus we have proved $\dim \mathrm{St}(\Delta (p))> 0$.
\end{proof}

\begin{lem}\label{lem:4}
Let $p:X\to Y$ be an algebraic fiber space. Assume that $\Delta (p)=\emptyset$. Then $p$ is $\pi _1$-exact. 
\end{lem}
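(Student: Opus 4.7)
The plan is to reduce to the proper case by compactification and a diagram chase. First, I would choose smooth projective compactifications $\bar{X}\supset X$ and $\bar{Y}\supset Y$ whose boundaries $D_X:=\bar X\setminus X$ and $D_Y:=\bar Y\setminus Y$ are simple normal crossing divisors, and such that $p$ extends to a proper fibration $\bar p:\bar X\to\bar Y$ (with connected general fibers). The classical $\pi_1$-exactness for proper algebraic fiber spaces (due to Nori and Koll\'ar) gives
\[
\pi_1(\bar F)\longrightarrow \pi_1(\bar X)\longrightarrow \pi_1(\bar Y)\longrightarrow 1,
\]
where $\bar F$ is a general fiber of $\bar p$.

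Next, I would exploit the standard identifications $\pi_1(X)=\pi_1(\bar X)/N_X$ and $\pi_1(Y)=\pi_1(\bar Y)/N_Y$, where $N_X$ (resp.\ $N_Y$) is the normal closure in $\pi_1(\bar X)$ (resp.\ $\pi_1(\bar Y)$) of the meridian loops around the irreducible components of $D_X$ (resp.\ $D_Y$); similarly $\pi_1(F)\twoheadrightarrow \pi_1(\bar F)$. I would then transfer exactness from the top row to the bottom row by a diagram chase. The induced map $p_*:\pi_1(X)\to \pi_1(Y)$ is well-defined provided $\bar p_*(N_X)\subseteq N_Y$, and exactness at $\pi_1(X)$ reduces to the property that every meridian generator of $N_Y$ admits a lift inside $N_X$. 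Equivalently, for every prime component of $D_Y$, the pullback $\bar p^{\ast}$ of that component must contain at least one irreducible component of $D_X$ of multiplicity exactly one.

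The hypothesis $\Delta(p)=\emptyset$ is precisely what yields this multiplicity-one property over prime divisors of $Y$: by definition, for every prime divisor $\Delta\subset Y$ there is a component $D_j$ of $p^{\ast}\Delta$ with $m_j=1$, so that the meridian around $D_j$ in $\bar X$ projects onto the meridian around $\Delta$ in $\bar Y$. The analogous condition over components of $D_Y$ is not automatic and constitutes the main technical point: I would arrange it by a careful choice of compactification, for instance via log resolutions or a semistable reduction along $D_Y$, both available in characteristic zero. Once the compactification is organised so that the interior multiplicities (controlled by $\Delta(p)=\emptyset$) and the boundary multiplicities (arranged by hand) cooperate, the desired $\pi_1$-exactness for $p$ follows formally from the proper case by the diagram chase.
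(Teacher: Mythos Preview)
Your approach has a fundamental orientation error. You write $\pi_1(X)=\pi_1(\bar X)/N_X$, but the surjection goes the other way: since $D_X$ has real codimension~$\geq 2$, the inclusion $X\hookrightarrow\bar X$ induces a \emph{surjection} $\pi_1(X)\twoheadrightarrow\pi_1(\bar X)$ whose kernel is normally generated by the meridians of $D_X$. Thus $\pi_1(\bar X)$ is a quotient of $\pi_1(X)$, not an extension of it. With the arrows pointing this way, knowing exactness of $\pi_1(\bar F)\to\pi_1(\bar X)\to\pi_1(\bar Y)$ tells you nothing useful about exactness at $\pi_1(X)$: if $\gamma\in\pi_1(X)$ dies in $\pi_1(Y)$, its image in $\pi_1(\bar X)$ comes from $\pi_1(\bar F)$, but this does not produce a preimage of $\gamma$ in $\pi_1(F)$ inside $\pi_1(X)$. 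Passing to the compactification throws away precisely the information you need.

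The paper's argument goes in the opposite direction: it restricts to a Zariski open $Y^*\subset Y$ over which $p$ is a locally trivial $C^\infty$ fibration, so that $\pi_1(F)\to\pi_1(X^*)\to\pi_1(Y^*)\to 1$ is exact by the homotopy long exact sequence. Now the surjections $\pi_1(X^*)\twoheadrightarrow\pi_1(X)$ and $\pi_1(Y^*)\twoheadrightarrow\pi_1(Y)$ go the right way for a diagram chase. The hypothesis $\Delta(p)=\emptyset$ enters exactly here: the kernel $L=\ker(\pi_1(Y^*)\to\pi_1(Y))$ is normally generated by meridians $\delta_\Delta$ around the components $\Delta$ of $Y\setminus Y^*$, and since some $D_j$ over $\Delta$ has multiplicity~$1$, the meridian $\gamma_\Delta$ around $D_j$ lies in $K=\ker(\pi_1(X^*)\to\pi_1(X))$ and maps to $\delta_\Delta$. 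This shows $K\to L$ is surjective, which is what forces exactness to descend. Your instinct about meridians and multiplicity one is correct, but it must be applied after \emph{shrinking} the base, not after compactifying it.
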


\begin{proof}
This lemma is proved implicitly in  \cite[A.C.10]{Cam98} 
and  \cite[Lemme 1.9.9]{Cam01}. 
Let $Y^*$ be a Zariski open subset of $Y$ on which $p$ is a locally trivial $C^\infty$ fibration. 
We may assume that $Y\backslash Y^*$ is a divisor. Set $X^*=p^{-1}(Y^*)$. For each irreducible component $\Delta $ of $Y\backslash Y^*$, we write $p^*(\Delta )=\sum_{j\in J(\Delta )}m_jD_j+E_{\Delta}$ as in \eqref{eqn:div}.
Then since $m_{\Delta}=1$, there is a divisor $D_j$, $j\in J(\Delta )$, such that $m_j=1$. We denote this $D_j$ by $D_{\Delta }$.
Fix a base point $x$ in $X^*$ and let $\gamma _{\Delta}$ be a small loop going once around the divisor $D_{\Delta}$ in the counterclockwise direction. Then $\delta _{\Delta}=p_*(\gamma _{\Delta})$ is a small loop going once around the divisor $\Delta$ for the corresponding base point $y=p(x)$ in $Y^*$.
Since the restriction $p|_{X^*}:X^*\to Y^*$ is smooth and surjective, 
we have the following exact sequence:
\begin{equation}\label{eqn:1}
\pi _1(F,x)\to \pi _1(X^*,x)\to \pi _1(Y^*,y)\to 1,
\end{equation}
where $F$ is the fiber of $p$ over $y$. Let $\Phi$ be the image of $\pi _1(F,x)\to \pi _1(X^*,x)$.
\par
Now we look the following exact sequence:
\begin{equation*}
\begin{CD}
@. @. 1 @. 1\\
@. @. @VVV @VVV \\
@. @. \Phi @>>> \Psi @>>> \widetilde{C}@>>> 1\\
@. @. @VVV @VVV \\
1 @>>> K @>>> \pi _1(X^*,x) @>>> \pi _1(X,x)@>>> 1\\
@. @VVV @VVV @VVV @.\\
1 @>>> L @>>> \pi _1(Y^*,y) @>>> \pi _1(Y,y)@>>> 1\\
@. @VVV @VVV  @VVV\\
@. C @. 1 @. 1\\
@. @VVV\\
@. 1
\end{CD}
\end{equation*}
Note that $\widetilde{C}$ and $C$ are naturally isomorphic.
To prove our lemma, it is enough to show that $\widetilde{C}$, hence $C$ is trivial.
Note that by Van Kampen's theorem, $L$ is generated by $\delta _{\Delta}$. Thus the map $K\to L$ is surjective, hence $C$ is trivial.
\end{proof}

Let $p:X\to Y$ and $q:Y\to S$ be algebraic fiber spaces.
Given $s\in S$, we consider $p_s:X_s\to Y_s$, where $X_s$ is the fiber of $q\circ p:X\to S$ and $Y_s$ is the fiber of $q:Y\to S$.

\begin{lem}\label{lem:20221020}
For generic $s\in S$, we have $\Delta(p_s)\subset \Delta (p)\cap Y_s$.
\end{lem}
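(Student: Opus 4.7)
The plan is to show that for each prime component $\Delta'$ of $\Supp \Delta(p_s)$, there exists a prime component $\widetilde\Delta\subseteq \Supp\Delta(p)$ with $\Delta'\subseteq\widetilde\Delta\cap Y_s$, by lifting divisors from the generic fibre $Y_s$ to the total space $Y$ using generic flatness. First, I would shrink $S$ to a Zariski dense open subset over which: $X_s$ and $Y_s$ are smooth and connected; the restriction $p_s:X_s\to Y_s$ is an algebraic fiber space; for every horizontal prime divisor $D\subset X$ (dominating $S$ via $q\circ p$) and every $\widetilde\Delta\subset Y$ (dominating $S$ via $q$) contributing to $\Delta(p_s)$ for some $s$, the scheme-theoretic restrictions $D\cap X_s$ and $\widetilde\Delta\cap Y_s$ are reduced with components of the expected dimension; and the $p$-exceptional locus of $X$ restricts exactly onto the $p_s$-exceptional locus of $X_s$. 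This is a standard generic-smoothness/flattening argument applied simultaneously to finitely many divisors.

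Next I would fix such a prime divisor $\Delta'\subset Y_s$, so that $\Delta'$ is not $p_s$-exceptional and $m_{\Delta'}(p_s)\geq 2$. Choose any non-$p_s$-exceptional prime divisor $D_0'\subset X_s$ with $\overline{p_s(D_0')}=\Delta'$, and set $D_0:=\overline{D_0'}\subset X$ and $\widetilde\Delta:=\overline{p(D_0)}\subset Y$. By the dimension counts arranged above, $D_0$ is a non-$p$-exceptional prime divisor dominating $S$ via $q\circ p$, $\widetilde\Delta$ is a prime divisor of $Y$ dominating $S$ with $\Delta'\subseteq\widetilde\Delta\cap Y_s$, and
\[
\mathrm{ord}_{D_0'}\,p_s^*\Delta' \;=\; \mathrm{ord}_{D_0}\,p^*\widetilde\Delta,
\]
because Cartier pullback commutes with generic base change and the components of $\widetilde\Delta\cap Y_s$ are for generic $s$ distinct prime divisors, so only $\Delta'$ contributes along $D_0'$. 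Varying $D_0'$, every non-$p_s$-exceptional prime of $X_s$ above $\Delta'$ is obtained this way; conversely, by generic surjectivity of $D_0\cap X_s\to\widetilde\Delta\cap Y_s$, every non-$p$-exceptional prime $D_0\subset X$ above $\widetilde\Delta$ produces a component $D_0'$ of $D_0\cap X_s$ mapping onto $\Delta'$. Taking infima over the matched sets yields $m_{\widetilde\Delta}(p)=m_{\Delta'}(p_s)\geq 2$, hence $\widetilde\Delta$ is a component of $\Supp\Delta(p)$ and $\Delta'\subseteq\widetilde\Delta\cap Y_s\subseteq\Supp\Delta(p)\cap Y_s$.

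The main technical step is ensuring, for $s$ in a single Zariski dense open of $S$, that the exceptional/non-exceptional dichotomy and the pullback multiplicities are simultaneously preserved under restriction to the fibres. Both reduce to generic flatness and equidimensionality of the restriction morphisms $D_0\to S$ and $\widetilde\Delta\to S$ for the finitely many horizontal divisors involved; once this set-up is secured, the remaining multiplicity bookkeeping is mechanical and the asserted containment $\Delta(p_s)\subset\Delta(p)\cap Y_s$ follows.
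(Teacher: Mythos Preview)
There is a genuine gap in the lifting step. You write ``set $D_0:=\overline{D_0'}\subset X$'' and then treat $D_0$ as a non-$p$-exceptional prime divisor of $X$ dominating $S$. But $X_s=(q\circ p)^{-1}(s)$ is a closed subscheme of $X$, so the Zariski closure of $D_0'\subset X_s$ inside $X$ is $D_0'$ itself. This has dimension $\dim X-\dim S-1$, hence codimension $\dim S+1$ in $X$, not $1$; it is not a divisor of $X$ and certainly does not dominate $S$. Consequently the expression $\mathrm{ord}_{D_0}\,p^*\widetilde\Delta$ is meaningless, and the matching of infima $m_{\widetilde\Delta}(p)=m_{\Delta'}(p_s)$ has no content. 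The whole mechanism of ``lifting divisors from the generic fibre to the total space by closure'' collapses for this reason.

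The paper avoids this problem by running the argument in the opposite direction. It fixes once and for all a Zariski open $Y^o\subset Y$ over which $p$ is smooth, writes $Y\setminus Y^o=D_1\cup\cdots\cup D_n$, and proves the contrapositive statement: if $D_i\not\subset\Delta(p)$, then for generic $s$ no component of $(D_i)_s$ lies in $\Delta(p_s)$. Since any $\Delta'\subset\Delta(p_s)$ must lie in the non-smooth locus of $p_s$, which for generic $s$ is contained in $D_s$, one concludes $\Delta'\subset(D_i)_s$ for some $i$ with $D_i\subset\Delta(p)$. This only requires \emph{restricting} finitely many known horizontal divisors from $Y$ to $Y_s$, for which generic flatness and Claim~\ref{claim:202210221} suffice; it never needs to manufacture a horizontal divisor out of a single fibral one. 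Your multiplicity bookkeeping could be grafted onto this framework, but the direction of the argument must be reversed.
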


To prove this lemma, we start from the following general discussion.

\begin{claim}\label{claim:202210221}
Let $p:V\to W$ and $q:W\to S$ be dominant morphisms of irreducible quasi projective varieties.
Then for generic $s\in S$, the map $p_s:V_s\to W_s$ is dominant.
\end{claim}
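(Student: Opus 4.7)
The plan is to combine Chevalley's constructibility theorem with a generic‐fiber argument over $S$. Concretely, I would argue that the image of $p$ contains a dense open $U\subset W$, cut $U$ into fibers over $S$, and use that each $U\cap W_s$ is dense in the (generically irreducible) $W_s$.

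First, I would apply Chevalley's theorem to the dominant morphism $p:V\to W$ of irreducible quasi-projective varieties: the image $p(V)$ is constructible in $W$ and, being dense, contains a Zariski open dense subset $U\subset W$. Since $U$ is dense in $W$ and $q:W\to S$ is dominant, the restriction $q|_{U}:U\to S$ remains dominant.

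Second, I would invoke the standard fact that, in characteristic zero, a dominant morphism between irreducible varieties has generically irreducible fibers. Applied to $q:W\to S$ and (to control nonemptiness of $U_s$) to $q|_U:U\to S$, this yields a dense open $S_0\subset S$ such that for every $s\in S_0$ the fiber $W_s=q^{-1}(s)$ is irreducible of pure dimension $\dim W-\dim S$ and the intersection $U_s:=U\cap W_s$ is nonempty. Irreducibility of $W_s$ then forces $U_s$ to be dense in $W_s$. Since $U\subset p(V)$, any $w\in U_s$ has a preimage $v\in V$ under $p$, which automatically lies in $V_s=(q\circ p)^{-1}(s)$; hence $U_s\subset p_s(V_s)$, and $p_s(V_s)$ is dense in $W_s$, proving dominance of $p_s$.

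The one delicate point is the generic irreducibility of the fibers of $q$; this is classical (via generic smoothness and the theorem on geometric generic fibers in characteristic zero) and is the sole ingredient requiring more than a line. Everything else is a formal consequence of Chevalley's theorem and the openness of $U\subset W$.
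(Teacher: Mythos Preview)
Your overall strategy is the same as the paper's: take a dense open $U\subset p(V)$ via Chevalley and show $U\cap W_s$ is dense in $W_s$ for generic $s$. However, the ``standard fact'' you invoke is false: a dominant morphism of irreducible varieties need \emph{not} have generically irreducible fibers. For instance, $q:\bA^1\setminus\{0\}\to\bA^1\setminus\{0\}$, $z\mapsto z^2$, has every fiber consisting of two reduced points. In general the generic fiber of $q$ is geometrically irreducible only when $K(S)$ is algebraically closed in $K(W)$, which is not assumed here. So your step ``irreducibility of $W_s$ forces $U_s$ dense'' does not go through as written.

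This is exactly the point the paper's proof addresses by taking the quasi-Stein factorisation $W\xrightarrow{\alpha}\Sigma\xrightarrow{\beta}S$: over $\Sigma$ the general fibers $W_\sigma$ \emph{are} irreducible (connected by construction, hence irreducible once smooth), so one gets $W^o\cap W_\sigma$ dense in each $W_\sigma$, and then assembles these over the finitely many $\sigma\in\beta^{-1}(s)$ to conclude $W^o\cap W_s$ is dense in the possibly reducible $W_s$. Your argument can also be repaired without the factorisation by a dimension count: set $Z=W\setminus U$; for generic $s$ every component of $W_s$ has dimension $\dim W-\dim S$, while each component of $Z$ either misses the generic fiber or contributes something of dimension $<\dim W-\dim S$, so $Z_s$ meets no component of $W_s$ in full and $U_s$ is dense. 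Either way, the irreducibility claim must be dropped.
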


\begin{proof}
We apply \cref{lem:Stein} for $q:W\to S$ to get a factorization $W\overset{\alpha}{\to}\Sigma\overset{\beta}{\to}S$ of $q$, where $\alpha:W\to \Sigma$ has connected general fibers and $\beta:\Sigma\to S$ is finite. 
Since $p(V)$ is a dense contractible set, there exists a non-empty Zariski open set $W^o\subset W$ such that $W^o\subset p(V)$.
Similarly, we may take a non-empty Zariski open set $\Sigma^o\subset \Sigma$ such that $\Sigma^o\subset \alpha(W^o)$.
By replacing $\Sigma^o$ by a smaller non-empty Zariski open set, we may assume that the fiber $W_{\sigma}$ is irreducible for every $\sigma\in \Sigma$.
Then for every $\sigma\in \Sigma$, $W^o\cap W_{\sigma}\subset W_{\sigma}$ is a non-empty, hence dense Zariski open set.
We set $S^o=S-\beta(\Sigma-\Sigma^o)$.
Then for every $s\in S^o$, we have $\beta^{-1}(s)\subset \Sigma^o$.
Hence for every $s\in S^o$, $W^o\cap W_s\subset W_s$ is dense.
By $W^o\subset p(V)$, we have $W^o\cap W_s\subset p_s(V_s)$.
Hence $p_s:V_s\to W_s$ is dominant.
\end{proof}

\begin{proof}[Proof of \cref{lem:20221020}]
We start from the following observation.

\begin{claim}
For generic $s\in S$, $p_s:X_s\to Y_s$ is an algebraic fiber space.
\end{claim}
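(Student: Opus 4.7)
My plan is to verify the two substantive defining properties of an algebraic fiber space for $p_s: X_s \to Y_s$ at generic $s \in S$: dominance, and connectedness of general fibers. Smoothness and irreducibility of the source and target for generic $s$ are routine: generic smoothness applied to both $q$ and $q\circ p$ yields smoothness, while irreducibility of $Y_s$ follows from $q$ having connected general fibers, and then $X_s = p^{-1}(Y_s)$ is irreducible because a dominant morphism with irreducible target and connected generic fibers has irreducible source.

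For dominance, I would simply invoke Claim \ref{claim:202210221} applied to $p:X\to Y$ and $q:Y\to S$; the conclusion of that auxiliary claim asserts precisely that $p_s: X_s \to Y_s$ is dominant for $s$ in a dense open subset of $S$. For connectedness of general fibers, I would use the Stein factorisation of $p$, say $X\xrightarrow{\alpha} T \xrightarrow{\beta} Y$, where $\alpha$ has connected fibers and $\beta$ is finite; since $p$ itself has connected general fibers, $\beta$ is birational, hence an isomorphism over a dense open subset $Y^\circ\subset Y$ on which every fiber of $p$ is connected. For generic $s$, the intersection $Y^\circ \cap Y_s$ is dense and open in $Y_s$, and for any $y\in Y^\circ \cap Y_s$ one has $p_s^{-1}(y) = p^{-1}(y)$, which is connected. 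Hence $p_s$ has connected general fibers.

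Putting these pieces together, for $s$ in a suitably small dense open subset of $S$, the morphism $p_s: X_s \to Y_s$ is a dominant morphism between smooth quasi-projective varieties with connected general fibers, and is therefore an algebraic fiber space. There is no substantive obstacle; the proof is essentially a careful bookkeeping of generic-smoothness loci combined with Claim \ref{claim:202210221} and the openness of the connected-fiber locus arising from the Stein factorisation of $p$.
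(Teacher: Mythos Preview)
Your proof takes essentially the same route as the paper's: both invoke Claim~\ref{claim:202210221} for dominance, and both produce a dense open $Y^{\circ}\subset Y$ over which every fiber of $p$ is connected (you via Stein factorisation, the paper by direct assertion), then observe that $Y^{\circ}\cap Y_s$ is dense in $Y_s$ for generic $s$.

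One small correction: your stated justification for the irreducibility of $X_s$---that ``a dominant morphism with irreducible target and connected generic fibers has irreducible source''---is false as a general principle (a stray component of $X_s$ mapping into a proper closed subset of $Y_s$ would not violate the hypothesis). The correct argument is that $q\circ p$ itself has connected general fibers (for instance because $K(S)$ is algebraically closed in $K(Y)$, which in turn is algebraically closed in $K(X)$), so $X_s$ is connected, hence irreducible once smooth. The paper does not spell this out either; it simply asserts that $X_s$ is a smooth variety for generic $s$.
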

\begin{proof}
By replacing $S$ by a smaller non-empty Zariski open set, we assume that $X_s$ and $Y_s$ are smooth varieties for all $s\in S$.
We apply \cref{claim:202210221}.
Then by replacing $S$ by a smaller non-empty Zariski open set, we may assume that $p_s:X_s\to Y_s$ is dominant for every $s\in S$.
Next we take a non-empty Zariski open set $Y^o\subset Y$ such that $p^{-1}(Y^o)\to Y^o$ has connected fibers.
We take a non-empty Zariski open set $S^o\subset S$ such that $S^o\subset q(Y^o)$ and that $Y_s$ is irreducible for every $s\in S^o$.
Hence for every $s\in S^o$, $Y^o\cap Y^s\subset Y_s$ is a dense Zariski open set.
Note that $p_s:X_s\to Y_s$ has connected fibers over $Y^o\cap Y^s\subset Y_s$.
Hence for every $s\in S^o$, $p_s:X_s\to Y_s$ is an algebraic fiber space.
\end{proof}

Since the problem is local in $S$, by replacing $S$ by a smaller non-empty Zariski open set, we assume that $p_s:X_s\to Y_s$ are algebraic fiber spaces for all $s\in S$.
In particular, both $X_s$ and $Y_s$ are smooth.

\begin{claim}\label{claim:202210222}
Let $D\subset Y$ be an irreducible and reduced divisor such that $q(D)\subset S$ is Zariski dense.
Assume that $D\not\subset \Delta(p)$.
Then for generic $s\in S$,
\begin{itemize}
\item
$D_s\subset Y_s$ is a reduced divisor, and
\item
for every irreducible component $\Delta$ of $D_s$, we have $\Delta\not\subset \Delta(p_s)$.
\end{itemize} 
\end{claim}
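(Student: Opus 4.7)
The plan is to trace a multiplicity-$1$ component $D_0 \subset X$ of $p^{\ast}D$ down to the generic fibre $X_s$ of $q\circ p$, and to show that every irreducible component of $D_s$ is dominated, with multiplicity still $1$, by an irreducible component of $D_0\cap X_s$; this will prevent each such component from being in the support of $\Delta(p_s)$. The first bullet (reducedness of $D_s$) is obtained for free from generic smoothness: after shrinking $S$, both $q:Y\to S$ and $q|_D:D\to S$ may be assumed smooth, so for every $s$ the scheme $D_s=D\cap Y_s$ is a smooth, hence reduced, Cartier divisor in $Y_s$.

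By hypothesis $D\not\subset\Delta(p)$, so in the decomposition
\[
p^{\ast}D \;=\; \sum_{j\in J(D)} m_j D_j + E_D
\]
there exists $j_0$ with $m_{j_0}=1$; set $D_0 := D_{j_0}$, a prime divisor of $X$ with $\overline{p(D_0)}=D$. Both $p|_{D_0}:D_0\to D$ and $q|_D:D\to S$ are dominant, so \cref{claim:202210221} applied to this pair gives, after a further shrinking of $S$, that the restriction $(p|_{D_0})_s:D_0\cap X_s\to D_s$ is dominant for every $s\in S$. In particular, each irreducible component $\Delta$ of $D_s$ is the image of some irreducible component $D_0'$ of $D_0\cap X_s$.

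It remains to control multiplicities. Applying generic flatness to each of the finitely many primes appearing in $p^{\ast}D$ and shrinking $S$ once more, we may assume that the equality
\[
p_s^{\ast}D_s \;=\; (p^{\ast}D)|_{X_s} \;=\; (D_0\cap X_s) \;+\; \sum_{j\neq j_0} m_j\,(D_j\cap X_s) \;+\; E_D\cap X_s
\]
holds with each summand a reduced, effective divisor of the expected codimension and with all components distinct. Consequently, every component $D_0'$ of $D_0\cap X_s$ dominating a component $\Delta$ of $D_s$ appears in $p_s^{\ast}\Delta$ with multiplicity $1$, and since $\overline{p_s(D_0')}=\Delta$ has codimension one in $Y_s$, $D_0'$ is not $p_s$-exceptional. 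Therefore $m_{\Delta}\le 1$ in the multiplicity divisor of $p_s$, i.e.\ $\Delta\not\subset\Delta(p_s)$. The main obstacle to watch is exactly this last step: one must ensure that after restriction to $X_s$ the components of $p^{\ast}D$ neither collapse onto one another nor pick up extra multiplicity along a common subdivisor. This is precisely a generic-flatness assertion applied to each of the finitely many primes of $p^{\ast}D$, and once this single additional shrinking of $S$ is in place, the remaining argument is pure bookkeeping of components.
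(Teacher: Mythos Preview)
Your argument is correct; it takes a route close in spirit to the paper's but with a different organizing trick in the second bullet.

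For the first bullet you both use generic smoothness on $q$ and $q|_D$. For the second bullet you and the paper both fix a multiplicity-one prime $D_0=W_{j_0}$ in $p^*D$ and invoke \cref{claim:202210221} to make $(D_0)_s\to D_s$ dominant. Where you diverge is in how the multiplicity $1$ is preserved after restriction. You keep all of $X$ and argue, via generic flatness applied to every prime of $p^*D$ and to the pairwise intersections $D_0\cap D_j$, $D_0\cap E_D$, that on a generic fibre the summands of $p_s^*D_s=(p^*D)|_{X_s}$ stay reduced and share no components; then a component $D_0'$ of $(D_0)_s$ over $\Delta$ certifies $m_\Delta(p_s)=1$. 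The paper instead \emph{deletes} the other summands: it passes to the open set $Z=X\smallsetminus\bigl(\sum_{j\ne j_0}D_j+E_D\bigr)$, on which the restricted map $\pi$ satisfies the clean identity $\pi^*D=W:=Z\cap W_{j_0}$, applies the first bullet to $W$ to get $W_s$ reduced, and reads off $m_\Delta(\pi_s)=1$ (hence $m_\Delta(p_s)=1$) directly from $\pi_s^*D_s=W_s$. The paper's device avoids entirely the bookkeeping you flag as ``the main obstacle'' (components merging or picking up multiplicity), at the price of checking that $\Delta\not\subset\Delta(\pi_s)$ implies $\Delta\not\subset\Delta(p_s)$, which is immediate since a non-exceptional multiplicity-one component of $\pi_s^*\Delta$ closes up to one for $p_s^*\Delta$. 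Your approach is more hands-on but equally valid once the finitely many generic-flatness shrinkings are made; just be explicit that ``all components distinct'' is arranged by shrinking $S$ so that each $D_0\cap D_j$ and $D_0\cap E_D$ is flat over $S$, forcing their fibres to have codimension $\ge 2$ in $X_s$.
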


\begin{proof}
We first prove the first assertion.
Let $D^o\subset D$ be the smooth locus of $D$.
Then $D^o\subset D$ is a dense Zariski open set.
We take a non-empty Zariski open set $S^o\subset S$ such that both $Y\to S$ and $D^o\to S$ are smooth and surjective over $S^o$.
By \cref{claim:202210221} applied to the open immersion $D^o\subset D$, we may also assume that $D^o_s\subset D_s$ is dense.
We take $s\in S^o$.
Then we have $\dim Y_s=\dim Y-\dim S$ and $\dim D_s=\dim D-\dim S$.
Hence we have $\dim D_s=\dim Y_s-1$.
Hence $D_s\subset Y_s$ is a divisor.
Note that the fiber $D^o_s=D^o\times_S\mathrm{Spec}(\mathbb C(s))$ is a regular scheme.
Hence $D_s\subset Y_s$ is a reduced divisor.
This shows the first assertion.

Next we prove the second assertion.
Let $p^*D=\sum m_jW_j+E$ be the decomposition as in \eqref{eqn:div}.
By $D\not\subset \Delta(p)$, there exists $W_{j_0}$ such that $m_{j_0}=1$.
We remove $\sum_{j\not=j_0} m_jW_j+E$ from $X$ to get a Zariski open set $Z\subset X$.
Then $Z$ is quasi-projective.
Set $W=Z\cap W_{j_0}$, which is a reduced divisor on $Z$.
Let $\pi:Z\to Y$ be the restriction of $p$ onto $Z$.
Then we have $\pi^*D=W$ and $\overline{\pi(W)}=D$.
We apply the first assertion of \cref{claim:202210222} for $W\subset X$.
Then for generic $s\in S$, $W_s\subset Z_s$ is a reduced divisor.
By \cref{claim:202210221}, $\overline{\pi_s(W_s)}=D_s$ for generic $s\in S$.
We take generic $s\in S$ and an irreducible component $\Delta$ of $D_s$.
Then by $\pi_s^*D_s=W_s$, we have $\Delta\not\subset \Delta(\pi_s)$, hence $\Delta\not\subset \Delta(p_s)$.
\end{proof}

Now let $Y^o\subset Y$ be a non-empty Zariski open set such that $p|_{X^o}:X^o\to Y^o$ is smooth and surjective, where $X^o=p^{-1}(Y^o)$.
We may assume that $D=Y-Y^o$ is a divisor.
Let $D_1,\ldots,D_n$ be the irreducible components of $D$.
By replacing $S$ by a smaller non-empty Zariski open set, we may assume that $q(D_i)$ is dense in $S$ for all $i=1,\ldots,n$.
We assume that $D_i$, $1\leq i\leq l$, are all the components of $D$ such that $D_i\not\subset \Delta(p)$.
By replacing $S$ by a smaller non-empty Zariski open set, we may assume that \cref{claim:202210222} is valid for all $s\in S$ and $D_{1},\ldots,D_l$.
We take $s\in S$.
Let $\Delta\subset Y_s$ be a prime divisor such that $\Delta\subset \Delta(p_s)$.

We first show $\Delta\subset D_s$.
For this purpose, we suppose contrary $\Delta\not\subset D_s$.
Then $\Delta\cap Y^o_s\not=\emptyset$.
Set $X^o_s=p_s^{-1}(Y^o_s)$.
Note that $p_s|_{X^o_s}:X^o_s\to Y^o_s$ is smooth and surjective.
Then the divisor $p_s^*\Delta\cap X^o_s\subset X^o_s$ is reduced and $p_s(\Delta\cap X^o_s)=\Delta\cap Y^o_s$.
Thus $\Delta\not\subset \Delta(p_s)$, a contradiction.
Hence $\Delta\subset D_s$.

Now we may take $D_i$, $i=1,\ldots,n$, such that $\Delta\subset D_i$.
Then by \cref{claim:202210222}, we have $l+1\leq i\leq n$.
Hence by $D_i\subset \Delta(p)$ for $i=l+1,\ldots,n$, we have $\Delta\subset \Delta(p)$.
This shows $\Delta(p_s)\subset \Delta(p)\cap Y_s$.
\end{proof}

\begin{lem}\label{lem:5}
Let $A$ be a semi-abelian variety, and let $p:X\to A$ be an algebraic fiber space. Let $B$ be the quotient semi-abelian variety $A/\mathrm{St}^o(\Delta (p))$.
Let $q:X\to B$ be the composition of $p$ and the quotient $r:A\to B$. 
If $q$ is $\pi _1$-exact, then $p$ is $\pi _1$-exact.
\end{lem}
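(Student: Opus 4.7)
The plan is to reduce the $\pi_1$-exactness of $p$ to that of the algebraic fiber space $p_b : X_b \to A_b$ obtained by restricting $p$ to a general fiber of $q$, and to combine this with the hypothesis that $q$ is $\pi_1$-exact via a diagram chase. The crucial preliminary step is to show that $p_b$ has empty orbifold ramification divisor for generic $b \in B$, so that \cref{lem:4} applies.

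First, I would exploit the fact that $C := \mathrm{St}^o(\Delta(p))$ acts on $A$ by translation preserving $\Delta(p)$ as a divisor: since $C$ is connected, every prime component of $\Delta(p)$ is $C$-invariant, hence is the preimage under $r : A \to B$ of a prime divisor on $B$. Therefore $\Delta(p) = r^{-1}(\Delta')$ for some divisor $\Delta' \subset B$. For any $b \in B \setminus \Delta'$ one has $A_b \cap \Delta(p) = \emptyset$, and by \cref{lem:20221020} we also have $\Delta(p_b) \subset \Delta(p) \cap A_b$ for generic $b$. Combining these, $\Delta(p_b) = \emptyset$ for generic $b \in B$, so \cref{lem:4} yields that $p_b : X_b \to A_b$ is $\pi_1$-exact.

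Second, since $r : A \to B$ is a surjective homomorphism of semi-abelian varieties with connected kernel $C$, and semi-abelian varieties are $K(\pi,1)$-spaces (their universal cover being $\mathbb{C}^{\dim A}$), the long exact homotopy sequence for the locally trivial fibration $r$ collapses to
$$1 \to \pi_1(A_b) \to \pi_1(A) \to \pi_1(B) \to 1.$$
In particular, $\pi_1(A_b) \to \pi_1(A)$ is injective.

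Finally, take $\gamma \in \ker\bigl(p_* : \pi_1(X) \to \pi_1(A)\bigr)$. Then $q_*(\gamma) = r_* p_*(\gamma) = 0$ in $\pi_1(B)$, so by the assumed $\pi_1$-exactness of $q$ applied to a general fiber $X_b$ of $q$ (chosen with $b \notin \Delta'$), we can write $\gamma = i_*(\alpha)$ for some $\alpha \in \pi_1(X_b)$, where $i : X_b \hookrightarrow X$ denotes the inclusion. The composition $p \circ i$ factors as $X_b \xrightarrow{p_b} A_b \hookrightarrow A$, so $(p_b)_*(\alpha)$ maps to $0$ in $\pi_1(A)$; by the injectivity established above, $(p_b)_*(\alpha) = 0$ in $\pi_1(A_b)$. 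The $\pi_1$-exactness of $p_b$ then produces $\beta \in \pi_1(F)$ with $\alpha = j_*(\beta)$ for $j : F \hookrightarrow X_b$ the inclusion of a general fiber of $p_b$, which is also a general fiber of $p$. Thus $\gamma = (i \circ j)_*(\beta)$ lies in the image of $\pi_1(F) \to \pi_1(X)$, and together with the standard surjectivity of $p_* : \pi_1(X) \to \pi_1(A)$ for an algebraic fiber space, this establishes $\pi_1$-exactness of $p$. The main technical point to verify carefully is the $C$-invariance of $\Delta(p)$ as a divisor (not merely of its support) so that $\Delta(p)$ descends to a divisor on $B$; this should follow directly from the definition of the stabilizer once one notes that for connected $C$, translation acts trivially on the free abelian group of divisors.
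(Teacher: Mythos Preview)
Your proof is correct and follows essentially the same approach as the paper: both show $\Delta(p_b)=\emptyset$ for generic $b$ by combining \cref{lem:20221020} with the $C$-invariance of $\Delta(p)$, apply \cref{lem:4} to obtain $\pi_1$-exactness of $p_b$, and then run a diagram chase using the short exact sequence $1\to\pi_1(A_b)\to\pi_1(A)\to\pi_1(B)\to 1$ together with the assumed $\pi_1$-exactness of $q$. The only cosmetic difference is that the paper phrases the final step via cokernels rather than an explicit element chase, and simply observes $\overline{r(\Delta(p))}\subsetneqq B$ from $r(\Delta(p))=\Delta(p)/\mathrm{St}^o(\Delta(p))$ rather than descending $\Delta(p)$ to a divisor on $B$.
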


\begin{proof}
We apply \cref{lem:20221020}.
Then for generic $b\in B$, we have $\Delta(p_b)\subset \Delta(p)$, where $p_b:X_b\to A_b$ is the induced algebraic fiber space.
Note that $\overline{r(\Delta (p))}\subsetneqq B$ by $r(\Delta (p))=\Delta(p)/\mathrm{St}^o(\Delta (p))$.
Hence we may assume $b\in B\backslash r(\Delta (p))$.
Then we have $A_b\cap \Delta(p)=\emptyset$.
Hence $\Delta(p_b)=\emptyset$.
We take generic $a\in A$ such that the fiber $X_a$ of $p:X\to A$ over $a$ is smooth.
Then $X_a$ is a smooth fiber of the algebraic fiber space $p_b:X_b\to A_b$.
By \cref{lem:4}, we have the exact sequence:
\begin{equation}\label{eqn:s}
\pi _1(X_a,x)\to \pi _1(X_b,x)\to \pi _1(A_b,a)\to 1,
\end{equation}
where $x\in X_a$.
We consider the following sequence:
\begin{equation}\label{eqn:m}
\begin{CD}
\pi _1(X_a,x) @>>> \pi _1(X,x) @>>> \pi _1(A,a) \\
@V{c}VV @V{i}VV @V{e}VV \\
\pi _1(X_b,x) @>>> \pi _1(X,x) @>>> \pi _1(B,b) 
\end{CD}
\end{equation}
where the second line is exact from the assumption and $i$ is an isomorphism.
By \eqref{eqn:s}, we have $\text{coker}(c)=\pi _1(A_b,a)=\text{ker}(e)$.
Thus the kernel of the natural map $\pi _1(X_b,x)\to \text{ker}(e)$ is the image of $c:\pi _1(X_a,x)\to \pi _1(X_b,x)$.
Hence the first line of \eqref{eqn:m} is also exact.
\end{proof}

\begin{proof}[Proof of \cref{pro:202210131}]
Inductively, we define algebraic fiber spaces $p_i:X\to A_i$, where $A_i$ are semi-abelian varieties, as follows: First, set $A_1=A$ and $p_1=p$. Next if $\dim \mathrm{St}(\Delta (p_i))>0$, then set $A_{i+1}=A_i/\mathrm{St}^o(\Delta (p_i))$ and let $p_{i+1}$ be the composition of $p_i$ and the quotient map $A_i\to A_{i+1}$. Then since $\dim A_i>\dim A_{i+1}>0$, this process should stop, i.e., $\dim \mathrm{St}(\Delta (p_i))=0$ for some $i$. Then by Lemma \ref{lem:3}, we have $\Delta (p_i)=\emptyset$. Thus by Lemmas \ref{lem:4} and \ref{lem:5}, we conclude that $p$ is $\pi _1$-exact.
\end{proof}

\subsection{Two Lemmas for the proof of \cref{thm:202210123}}\label{s3}

Before going to give a proof of \cref{thm:202210123}, we prepare two  lemmas. 

\begin{lem}\label{claim:20221014}
Let $\Pi\subset \mathbb C^n$ be a finitely generated additive subgroup which is Zariski dense.
Set $\Sigma=\{ \sigma\in\mathrm{GL}(\mathbb C^n);\ \sigma\Pi= \Pi\}$.
Let $\lambda:\Sigma\to \mathrm{GL}(\Pi\otimes_{\mathbb Z}\overline{\mathbb Q})$ be the induced map.
Let $\widetilde{\Sigma}\subset \Sigma$ be a subgroup and let $E\subset \mathrm{GL}(\Pi\otimes_{\mathbb Z}\overline{\mathbb Q})$ be the Zariski closure of $\lambda(\widetilde{\Sigma})\subset \mathrm{GL}(\Pi\otimes_{\mathbb Z}\overline{\mathbb Q})$.
Let $Y$ be the Zariski closure of $\widetilde{\Sigma}\subset \mathrm{GL}(\mathbb C^n)$.
Assume that the identity component $E^o\subset E$ is unipotent.
Then the identity component $Y^o\subset Y$ is unipotent.  
\end{lem}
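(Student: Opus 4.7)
The plan is to exploit the fact that the Zariski density of $\Pi$ in $\mathbb{C}^n$ provides a $\widetilde{\Sigma}$-equivariant surjection from $\Pi\otimes_{\mathbb{Z}}\mathbb{C}$ onto $\mathbb{C}^n$, which in turn realises $Y$ as the image of the $\mathbb{C}$-form of $E$ under a morphism of algebraic groups. Since homomorphisms of algebraic groups carry unipotent subgroups to unipotent subgroups, the assumption on $E^o$ will transfer to $Y^o$.

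First, I would fix $r:=\operatorname{rank}_{\mathbb{Z}}\Pi<\infty$. Since $\Pi\subset\mathbb{C}^n$ is Zariski dense and $\mathbb{C}^n$ is an irreducible variety, the $\mathbb{C}$-linear extension $\iota:\Pi\otimes_{\mathbb{Z}}\mathbb{C}\to\mathbb{C}^n$ of the inclusion $\Pi\hookrightarrow\mathbb{C}^n$ must be surjective (its image is a linear subspace containing $\Pi$). The group $\widetilde{\Sigma}$ acts on $\Pi\otimes_{\mathbb{Z}}\mathbb{C}$ through its natural $\mathbb{Z}$-linear action on $\Pi$, and it acts on $\mathbb{C}^n$ through the tautological inclusion $\widetilde{\Sigma}\subset\mathrm{GL}(\mathbb{C}^n)$. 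By construction, $\iota$ intertwines these two actions, so $W:=\ker\iota$ is a $\widetilde{\Sigma}$-stable subspace.

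Next, let $E_{\mathbb{C}}$ denote the Zariski closure of $\widetilde{\Sigma}$ inside $\mathrm{GL}(\Pi\otimes_{\mathbb{Z}}\mathbb{C})$. Because $\widetilde{\Sigma}$ lies in the $\overline{\mathbb{Q}}$-points $\mathrm{GL}(\Pi\otimes_{\mathbb{Z}}\overline{\mathbb{Q}})$, the algebraic group $E_{\mathbb{C}}$ is defined over $\overline{\mathbb{Q}}$ and coincides with $E\times_{\overline{\mathbb{Q}}}\mathbb{C}$ via the faithful flatness of $\overline{\mathbb{Q}}\to\mathbb{C}$; in particular $E_{\mathbb{C}}^{o}=E^{o}\times_{\overline{\mathbb{Q}}}\mathbb{C}$ is unipotent, since unipotence is stable under base change. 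The stabiliser of $W$ in $\mathrm{GL}(\Pi\otimes_{\mathbb{Z}}\mathbb{C})$ is a closed algebraic subgroup containing $\widetilde{\Sigma}$, hence it contains $E_{\mathbb{C}}$. Therefore $\iota$ induces a morphism of algebraic groups
\[
\phi:E_{\mathbb{C}}\longrightarrow \mathrm{GL}\bigl((\Pi\otimes_{\mathbb{Z}}\mathbb{C})/W\bigr)=\mathrm{GL}(\mathbb{C}^{n}),
\]
whose restriction to $\widetilde{\Sigma}$ is the given inclusion $\widetilde{\Sigma}\hookrightarrow\mathrm{GL}(\mathbb{C}^n)$.

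Finally, I would invoke the standard fact that the image of a morphism of algebraic groups is a closed algebraic subgroup. Thus $\phi(E_{\mathbb{C}})$ is closed in $\mathrm{GL}(\mathbb{C}^n)$ and contains $\widetilde{\Sigma}$ as a Zariski-dense subset, so $\phi(E_{\mathbb{C}})=Y$ and consequently $\phi(E_{\mathbb{C}}^{o})=Y^{o}$. Because $\phi$ is a homomorphism of algebraic groups, it sends unipotent elements to unipotent elements, so the image of the unipotent connected group $E_{\mathbb{C}}^{o}$ is again a unipotent connected algebraic group. Hence $Y^{o}$ is unipotent, as required.

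The only delicate point in this plan is the compatibility of Zariski closures with the extension of scalars $\overline{\mathbb{Q}}\hookrightarrow\mathbb{C}$; this is the step I would check most carefully, but it follows from faithful flatness together with the fact that the identity component and unipotence are preserved by flat base change. Everything else is essentially formal manipulation with closed subgroups and equivariant quotients.
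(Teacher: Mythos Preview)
Your argument is correct and follows essentially the same route as the paper: both proofs use the Zariski density of $\Pi$ to obtain the $\widetilde{\Sigma}$-equivariant surjection $\Pi\otimes_{\mathbb{Z}}\mathbb{C}\twoheadrightarrow\mathbb{C}^n$, pass to the stabiliser of its kernel to get a homomorphism from $E_{\mathbb{C}}=E\times_{\overline{\mathbb{Q}}}\mathbb{C}$ onto $Y$, and conclude that $Y^o$, as a quotient of the unipotent group $E_{\mathbb{C}}^o$, is unipotent. Your remark that the base-change step $E_{\mathbb{C}}\cong E\times_{\overline{\mathbb{Q}}}\mathbb{C}$ is the one point deserving care is exactly right; the paper uses it in the same way without further comment.
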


\begin{proof}
Since $\Pi\subset \mathbb C^n$ is Zariski dense, $\lambda$ is injective and the natural linear map $q:\Pi\otimes_{\mathbb Z}\mathbb C\to \mathbb C^n$ is surjective.
Set $K=\mathrm{ker}(q)$.
Then we have the following exact sequence of finite dimensional $\mathbb C$-vector spaces:
$$
0\to K\to \Pi\otimes_{\mathbb Z}\mathbb C\to \mathbb C^n\to 0.
$$
Let $H\subset \mathrm{GL}(\Pi\otimes_{\mathbb Z}\mathbb C)$ be the algebraic subgroup such that $g\in H$ if and only if $gK=K$.
We claim that $\lambda(\Sigma)\subset H$ under the embedding $\lambda:\Sigma\hookrightarrow \mathrm{GL}(\Pi\otimes_{\mathbb Z}\mathbb C)$.
Indeed, we take $\sigma\in \Sigma$ and $v_1\otimes c_1+\cdots+v_r\otimes c_r\in K$.
Then we have $c_1v_1+\cdots +c_rv_r=0$ in $\mathbb C^n$.
We have
\begin{equation*}
\begin{split}
q(\lambda(\sigma)(v_1\otimes c_1+\cdots+v_r\otimes c_r))&=q(\sigma(v_1)\otimes c_1+\cdots+\sigma(v_r)\otimes c_r)\\
&=c_1q(\sigma(v_1))+\cdots+c_rq(\sigma(v_r))
\\
&=c_1\sigma(v_1)+\cdots+c_r\sigma(v_r)
\\
&=\sigma(c_1v_1+\cdots+c_rv_r)=0.
\end{split}
\end{equation*}
Hence $\sigma(v_1\otimes c_1+\cdots+v_r\otimes c_r)\in K$.
Thus $\sigma\in H$, so $\Sigma\subset H$.

Now we have a morphism $p:H\to \mathrm{GL}(\mathbb C^n)$.
Then $p\circ\lambda:\Sigma\to \mathrm{GL}(\mathbb C^n)$ is the original embedding.
This induces a surjection $E_{\mathbb C}\to Y$, where $E_{\mathbb C}=E\times_{\bar{\mathbb Q}}\mathbb C$.
Hence we get a surjection $E^o_{\mathbb C}\to Y^o$.
Since every quotient of unipotent group is again unipotent, 
 $Y^o$ is unipotent. 
\end{proof}

\begin{lem}\label{lem:20221015}
Let $Z\to E$ be a surjective morphism of algebraic groups.
Assume that the radical of $Z$ is unipotent.
Then the radical of $E$ is unipotent.
\end{lem}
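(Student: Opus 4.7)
\textbf{Proof strategy for \cref{lem:20221015}.} The plan is to show that the image $N := \phi(R(Z))$ of the radical of $Z$ under the surjective morphism $\phi : Z \to E$ coincides with the radical $R(E)$. Since homomorphic images of unipotent groups are unipotent, and $R(Z)$ is assumed unipotent by hypothesis, this identification will immediately yield that $R(E)$ is unipotent.

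First I would reduce to the case where both $Z$ and $E$ are connected. This is justified by noting that $\phi(Z^\circ)$ is a closed, connected, normal subgroup of finite index in $\phi(Z)=E$, hence equals $E^\circ$; and radicals are insensitive to replacing a group by its identity component. So assume $Z$ and $E$ are connected.

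The containment $N \subseteq R(E)$ is the easier direction: one verifies that $N$ is a closed, connected, normal, solvable subgroup of $E$ (closedness uses that images under morphisms of algebraic groups are closed; connectedness and solvability follow since $R(Z)$ is connected and solvable; normality follows from the normality of $R(Z)$ together with the surjectivity of $\phi$). Thus $N$ is contained in the maximal such subgroup, namely $R(E)$.

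For the reverse inclusion $R(E) \subseteq N$, the key observation is that $\phi$ descends to a surjective morphism $Z/R(Z) \to E/N$. By the defining property of the radical, $Z/R(Z)$ is semisimple, and since any quotient of a semisimple algebraic group is semisimple, $E/N$ is semisimple as well. In particular $R(E/N)$ is trivial. The image of $R(E)$ in $E/N$ is connected, normal and solvable, hence lies in $R(E/N)=1$, giving $R(E) \subseteq N$. Combining both inclusions yields $R(E) = N$, which is unipotent as the image of the unipotent group $R(Z)$.

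There is no serious obstacle in this proof; it is a purely formal manipulation of radicals using the functorial behavior of solvability, connectedness, and semisimplicity under surjective morphisms. The only point that requires mild care is the reduction to the connected case and the verification that $N$ is automatically closed, both of which are standard facts about morphisms of linear algebraic groups.
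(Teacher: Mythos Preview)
Your proof is correct and follows essentially the same route as the paper's. Both arguments identify $R(E)$ with the image $\phi(R(Z))$ by observing that this image is a connected normal solvable subgroup of $E$ whose quotient $E/\phi(R(Z))$ is a quotient of the semisimple group $Z/R(Z)$, hence itself semisimple; the paper phrases this via the exact sequence $1\to R(Z)/(R(Z)\cap\ker\phi)\to E\to (Z/R(Z))/N'\to 1$ and jumps directly to the conclusion $R(E)=R(Z)/(R(Z)\cap\ker\phi)$, whereas you spell out the two inclusions and add an explicit reduction to the connected case, which the paper leaves implicit.
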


\begin{proof}
We have an exact sequence
$$
1\to R(Z)\to Z\to Z/R(Z)\to 1,
$$
where $R(Z)$ is the radical of $Z$.
Then $Z/R(Z)$ is semi-simple.
Let $N\subset Z$ be the kernel of $Z\to E$.
Then we have
$$
1\to R(Z)/(R(Z)\cap N)\to E\to (Z/R(Z))/N'\to 1
$$
where $N'\subset Z/R(Z)$ is the image of $N$.
Then $(Z/R(Z))/N'$ is semi-simple and $R(Z)/(R(Z)\cap N)$ is solvable.
Hence $R(E)=R(Z)/(R(Z)\cap N)$.
Hence $R(E)$ is a quotient of $R(Z)$.
Note that $R(Z)$ is unipotent.
Hence $R(E)$ is unipotent.
\end{proof}

\subsection{Proof of \cref{thm:202210123}}\label{s4}
\begin{proof}[Proof of \cref{thm:202210123}]
We take a finite {\'e}tale cover $X'\to X$ such that $\pi_1(X')^{ab}$ is torsion free.
Note that $\varphi(\pi_1(X'))\subset G$ is Zariski dense and $X'$ is special or $h$-special.
Hence by replacing $X$ by $X'$, we may assume that $\pi_1(X)^{ab}$ is torsion free.

Let $X\to A$ be the quasi Albanese map.
We have the following sequence 
\begin{equation}\label{eqn:202210131}
\pi_1(F)\overset{\kappa}{\to} \pi_1(X)\to \pi_1(A)\to 1,
\end{equation}
where $F\subset X$ is a generic fiber.
By \cref{pro:202210131}, this sequence is exact.
Since $\pi_1(X)^{ab}$ is torsion free, we have $\pi_1(A)=\pi_1(X)^{ab}$.
Hence $\kappa$ induces $\pi_1(F)\to\pi_1(X)'$, where $\pi_1(X)'= [\pi_1(X),\pi_1(X)]$.
By $\varphi(\pi_1(X)')\subset G'$, we get 
$$\varphi\circ\kappa:\pi_1(F)\to G'.$$
Let $\Pi\subset G'/G''$ be the image of $\pi_1(F)\to G'/G''$.
Since $\pi_1(F)$ is finitely generated, $\Pi$ is a finitely generated, abelian group.

\begin{claim}\label{claim:202210153}
$\Pi\subset G'/G''$ is Zariski dense.
\end{claim}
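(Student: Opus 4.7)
My plan is to prove the claim in three steps: first identify $\mathrm{Im}(\kappa)$ with the derived subgroup $\pi_1(X)'$, then show that $\varphi(\pi_1(X)')$ is Zariski dense in $G'$, and finally project to $G'/G''$.

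For the first step, I would invoke \cref{pro:202210131}, which guarantees $\pi_1$-exactness of the quasi-Albanese map, so that the sequence \eqref{eqn:202210131} is exact. The key point is that $\pi_1(A)$ is canonically the torsion-free quotient of $\pi_1(X)^{ab}$; having reduced at the outset to the case where $\pi_1(X)^{ab}$ is torsion free, the surjection $\pi_1(X)\twoheadrightarrow\pi_1(A)$ is identified with the abelianization $\pi_1(X)\twoheadrightarrow\pi_1(X)^{ab}$, whose kernel is $\pi_1(X)'$. Exactness of \eqref{eqn:202210131} then gives $\mathrm{Im}(\kappa)=\pi_1(X)'$, hence $\mathrm{Im}(\varphi\circ\kappa)=\varphi(\pi_1(X)')$, and correspondingly $\Pi$ is precisely the image of $\varphi(\pi_1(X)')$ under $G'\twoheadrightarrow G'/G''$.

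For the second step, set $H:=\varphi(\pi_1(X))$ and consider the commutator morphism $c:G\times G\to G$, $(g,h)\mapsto [g,h]$. Since $H\times H$ is Zariski dense in $G\times G$ and $c$ is a morphism, $c(H\times H)$ is Zariski dense in $c(G\times G)$. The subgroup $\varphi(\pi_1(X)')$ contains $c(H\times H)$ and is contained in $G'=[G,G]$; since $G$ is connected, $G'$ is a closed subgroup generated as an abstract group by $c(G\times G)$. Consequently the Zariski closure $\overline{\varphi(\pi_1(X)')}$ is a closed subgroup of $G$ that is contained in $G'$ and contains the generating set $c(G\times G)$, and therefore equals $G'$. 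Projecting along the surjective morphism of algebraic groups $G'\twoheadrightarrow G'/G''$ preserves Zariski density, so $\Pi$ is Zariski dense in $G'/G''$.

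The individual steps are each routine; the only mildly nontrivial input is the commutator-density statement in the second step, which genuinely uses the connectedness of $G$ (so that $G'$ is closed and generated as a subgroup by the image of $c$). No further ingredients beyond \cref{pro:202210131} and the torsion-free reduction made at the start of the proof of \cref{thm:202210123} are needed.
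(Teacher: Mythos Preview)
Your proposal is correct and follows essentially the same structure as the paper: identify $\mathrm{Im}(\kappa)$ with $\pi_1(X)'$ via $\pi_1$-exactness and the torsion-free reduction, show that $\varphi(\pi_1(X)')$ is Zariski dense in $G'$, and project to $G'/G''$. The only difference lies in the argument for the density of $\Gamma':=\varphi(\pi_1(X))'$ in $G'$: you argue directly that the commutator map carries the dense subset $\Gamma\times\Gamma$ onto a set whose closure contains the generating set $c(G\times G)$ of $G'$, whereas the paper observes that the closure $H$ of $\Gamma'$ is normal in $G$ (since its normalizer is closed and contains the dense subgroup $\Gamma$), that $G/H$ receives a Zariski dense map from the abelian group $\Gamma/\Gamma'$ and is therefore commutative, whence $G'\subset H$. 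Both are standard one-line arguments and yield the same conclusion.
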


\begin{proof}
Set $\Gamma=\varphi(\pi_1(X))$.
Note that $\Gamma' \subset G'$, where $\Gamma'=[\Gamma,\Gamma]$ and $G'=[G,G]$.
We first show that $\Gamma'$ is Zariski dense in $G'$.
Let $H\subset G'$ be the Zariski closure of $\Gamma'$.
Then we have $\Gamma/\Gamma'\to G/H$, whose image is Zariski dense.
Since $\Gamma/\Gamma'$ is commutative, $G/H$ is commutative.
Hence $G'\subset H$.
This shows $H=G'$.
Thus $\Gamma'\subset G'$ is Zariski dense.

Now by \cref{pro:202210131}, $\kappa$ induces the surjection $\pi_1(F)\twoheadrightarrow \pi_1(X)'$.
Since $\pi_1(X)\to\Gamma$ is surjective, the induced map $\pi_1(X)'\to\Gamma'$ is surjective.
Hence $\pi_1(X)\to \Gamma$ induces a surjection $\pi_1(F)\twoheadrightarrow \Gamma'$.
Hence the image $\pi_1(F)\to G'$ is Zariski dense, for $\Gamma'\subset G'$ is Zariski dense.
Hence $\Pi\subset G'/G''$ is Zariski dense.
\end{proof}

Let $\Phi\subset \pi_1(X)$ be the image of $\pi_1(F)\to \pi_1(X)$.
By \eqref{eqn:202210131}, we have the following exact sequence:
$$
1\to \Phi\to\pi_1(X)\to \pi_1(A)\to 1.
$$
Note that $\Phi'\subset \pi_1(X)$ is a normal subgroup.
Hence we get
\begin{equation*}
\begin{CD}
1@>>> \Phi^{ab} @>>> \pi _1(X)/\Phi' @>>> \pi _1(A) @>>> 1\\
@. @VVV @VVV @VVV \\
1@>>> G'/G'' @>>> G/G'' @>>> G/G' @>>> 1
\end{CD}
\end{equation*}
By the conjugation, we get 
\begin{equation*}
\pi_1(A)\to \mathrm{Aut}(\Phi^{ab}).
\end{equation*}
This induces
\begin{equation*}
\rho:\pi_1(A)\to \mathrm{Aut}(\Pi).
\end{equation*}
Note that $G'/G''$ is a commutative unipotent group.
Hence $G'/G''\simeq (\mathbb G_a)^n$, where $\mathbb G_a$ is the additive group.
The exponential map $\mathrm{Lie}(G'/G'')\to G'/G''$ is an isomorphism. 
Note that $(\mathbb G_a)^n=\mathbb C^n$ as additive group.
We have 
$$\mathrm{Aut}((\mathbb G_a)^n)=\mathrm{GL}(\mathrm{Lie}(G'/G''))=\mathrm{GL}(\mathbb C^n).
$$
Hence by the conjugate, we have 
$$
\mu:G/G'\to \mathrm{Aut}(G'/G'')=\mathrm{GL}(\mathbb C^n).
$$

Let $1\to U\to G\to T\to 1$ be the sequence as in \eqref{eqn:20230321}.
We have $G/G'=(U/G')\times T$, from which we obtain $\mu|_T:T\to\mathrm{GL}(\mathbb C^n) $.
In the following, we are going to prove that $\mu|_T$ is trivial.
(Then \cref{claim:202210121} will yields that $G$ is nilpotent.)
For this purpose, we shall show that $\mu|_T(T)$ is contained in some unipotent subgroup $Y\subset \mathrm{GL}(\mathbb C^n)$, which we describe below, and then apply \cref{lem:trivial morphism}.

Now we define a subgroup 
$$\Sigma=\{ \sigma\in\mathrm{Aut}(G'/G'');\ \sigma\Pi= \Pi\}\subset \mathrm{GL}(\mathbb C^n).$$
Note that $\rho:\pi_1(A)\to \mathrm{Aut}(\Pi)$ factors through $\mu:G/G'\to \mathrm{Aut}(G'/G'')$.
This induces the following commutative diagram:

\begin{equation}\label{eqn:202210141}
\begin{CD}
\pi _1(A)@>\rho>> \Sigma \\
@VVV @VVV \\
G/G'@>>\mu> \mathrm{GL}(\mathbb C^n)
\end{CD}
\end{equation}

Since $\Pi\subset \bC^n$ is finitely generated, $\Pi$ is a free abelian group of finite rank. 
Since $\Pi\subset \mathbb C^n$ is Zariski dense (cf. \cref{claim:202210153}), the linear subspace spanned by $\Pi$ is $\mathbb C^n$.
Hence we may embed  $\Sigma\subset \mathrm{GL}(\Pi\otimes_{\mathbb Z}\bar{\mathbb Q})$.
Let $E\subset \mathrm{GL}(\Pi\otimes_{\mathbb Z}\bar{\mathbb Q})$ be the Zariski closure of $\rho(\pi_1(A))\subset \mathrm{GL}(\Pi\otimes_{\mathbb Z}\bar{\mathbb Q})$.
Then $E$ is commutative.
Let $E^o\subset E$ be the identity component.

\begin{claim}
$E^o$ is unipotent.
\end{claim}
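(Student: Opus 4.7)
The plan is to combine the $\pi_{1}$-exactness of the quasi-Albanese map (\cref{pro:202210131}) with Deligne's theorem on the semisimplicity of monodromy in a polarized variation of Hodge structure. Since $A$ is a commutative complex algebraic group, $\pi_{1}(A)$ is abelian, so $\rho(\pi_{1}(A))$ is commutative and hence so is $E$. Consequently $E^{o}\cong U_{E}\times T_{E}$ with $U_{E}$ connected unipotent and $T_{E}$ an algebraic torus; showing $E^{o}$ unipotent amounts to showing $T_{E}$ is trivial.

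By \cref{pro:202210131}, $\Phi$ is the image of $\pi_{1}(F)$ in $\pi_{1}(X)$ for a general fiber $F$ of the quasi-Albanese morphism $f:X\to A$, and $\rho$ arises from the conjugation action of $\pi_{1}(X)$ on $\Phi^{\mathrm{ab}}$, descending to $\Pi$ via the $\pi_{1}(A)$-equivariant surjection $\Phi^{\mathrm{ab}}\twoheadrightarrow\Pi$ (equivariance being a consequence of $\varphi$ being a group homomorphism). In particular, $\Phi^{\mathrm{ab}}\otimes\mathbb{Q}$ is a quotient of $H_{1}(F,\mathbb{Q})$, which carries Deligne's canonical mixed Hodge structure. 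Over the open subset $A^{\circ}\subset A$ where $f$ is smooth, these Hodge structures assemble into an admissible variation of MHS, and the monodromy of $\pi_{1}(A^{\circ})$---which factors through $\pi_{1}(A)$ since the action is defined by conjugation in $\pi_{1}(X)$---preserves the weight filtration.

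To show that a finite-index subgroup of $\pi_{1}(A)$ acts trivially on each weight-graded piece, I would invoke Deligne's theorem: in a polarizable VHS over a smooth quasi-projective base, the identity component of the Zariski closure of the monodromy is a semisimple algebraic group. Here the image is also commutative (since $\pi_{1}(A)$ is abelian), so its Zariski closure on each graded piece has a commutative semisimple identity component, which must therefore be trivial; the monodromy image on each graded piece is thus finite. Passing to the intersection of the corresponding finite-index subgroups, the $\pi_{1}(A)$-action is trivial on every $\mathrm{Gr}^{W}(H_{1}(F,\mathbb{Q}))$; preserving the weight filtration and acting trivially on its associated graded forces unipotency of the full action on $\Phi^{\mathrm{ab}}\otimes\mathbb{Q}$, hence on the quotient $\Pi\otimes\bar{\mathbb{Q}}$, giving $E^{o}$ unipotent.

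The main obstacle will be the careful reduction to polarizable VHS on the graded pieces of the mixed structure---one needs that each $\mathrm{Gr}^{W}_{k} H_{1}(F,\mathbb{Q})$ forms a polarizable VHS over $A^{\circ}$ of weight $k$, to which Deligne's semisimplicity theorem applies---together with the verification that the unipotency established on $\Phi^{\mathrm{ab}}\otimes\mathbb{Q}$ descends compatibly through the solvable-group quotient to $\Pi\otimes\bar{\mathbb{Q}}$.
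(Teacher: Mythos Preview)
Your argument has a genuine gap at the point where you assert that the monodromy of $\pi_1(A^{\circ})$ on $H_1(F,\mathbb{Q})$ ``factors through $\pi_1(A)$ since the action is defined by conjugation in $\pi_1(X)$''. This is not correct: the monodromy of the local system $R^1(f_{\circ})_*\mathbb{Q}$ over $A^{\circ}$ is induced by conjugation in $\pi_1(X^{\circ})$, not in $\pi_1(X)$. What does factor through $\pi_1(A)$ is the induced action on the \emph{quotient} $\Phi^{\mathrm{ab}}$ (and further on $\Pi$), because $\Phi$ is normal in $\pi_1(X)$ with abelian quotient $\pi_1(A)$; but the kernel of $\pi_1(F)^{\mathrm{ab}}\to\Phi^{\mathrm{ab}}$ is not \emph{a priori} a sub-MHS, so you cannot transport Deligne's theorem directly to $\Phi^{\mathrm{ab}}$. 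Since $\pi_1(A^{\circ})$ is in general non-abelian (remove a point from $\mathbb{C}^*$ and you get a free group on two generators), the monodromy image on $\mathrm{Gr}^W H_1(F,\mathbb{Q})$ need not be commutative, and your appeal to ``commutative $+$ semisimple $\Rightarrow$ finite'' collapses.

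The paper circumvents this by using the stronger form of Deligne's theorem (as in Andr\'e \cite[Corollary 2]{And92}): for an admissible variation of mixed Hodge structure, the \emph{radical} of the algebraic monodromy group $Z\subset\mathrm{GL}(\pi_1(F)^{\mathrm{ab}}\otimes\bar{\mathbb{Q}})$ is unipotent, with no commutativity hypothesis. One then checks that the natural factorisation $\pi_1(F)^{\mathrm{ab}}\to\Psi^{\mathrm{ab}}\to\Phi^{\mathrm{ab}}\to\Pi$ yields a surjection of algebraic groups $Z\to E$, so that $R(E)$ is unipotent by \cref{lem:20221015}. Only at this last stage is commutativity used: since $\rho$ genuinely factors through the abelian group $\pi_1(A)$, the group $E$ is commutative, whence $R(E)=E^{o}$.
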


\begin{proof}
We apply Deligne's theorem.
Let $A^o\subset A$ be a non-empty Zariski open set such that the restriction $f_o:X^o\to A^o$ of $X\to A$ over $A^o$ is a locally trivial $C^\infty$-fibration. 
Then we have the following exact sequence 
$$
\pi_1(F)\to \pi_1(X^o)\to\pi_1(A^o)\to 1.
$$
Let $\Psi\subset \pi_1(X^o)$ be the image of $\pi_1(F)\to \pi_1(X^o)$.
Note that $\Psi'\subset \pi_1(X^o)$ is a normal subgroup.
Then we get the following commutative diagram:

\begin{equation*}
\begin{CD}
1@>>> \Psi^{ab} @>>> \pi _1(X^o)/\Psi' @>>> \pi _1(A^o) @>>> 1\\
@. @VVV @VVV @VVV \\
1@>>>  \Phi^{ab} @>>> \pi _1(X)/\Phi' @>>> \pi _1(A)@>>> 1 \\
@. @VVV @VVV @VVV\\
1@>>> G'/G'' @>>> G/G'' @>>> G/G' @>>> 1
\end{CD}
\end{equation*}
By the conjugation, we get 
\begin{equation*}
\lambda:\pi_1(A^o)\to \mathrm{Aut}(\Psi^{ab}).
\end{equation*} 
This induces
\begin{equation*}
\bar{\lambda}:\pi_1(A^o)\to \mathrm{Aut}(\Pi),
\end{equation*}
which is the composite of $\pi_1(A^o)\to \pi_1(A)$ and $\rho:\pi_1(A)\to  \mathrm{Aut}(\Pi)$.
We note that $\bar{\lambda}$ induces
\begin{equation*}
\bar{\lambda}_{\bar{\mathbb Q}}:\pi_1(A^o)\to\mathrm{GL}(\Pi\otimes_{\mathbb Z}\bar{\mathbb Q}).
\end{equation*}
Then $E\subset \mathrm{GL}(\Pi\otimes_{\mathbb Z}\bar{\mathbb Q})$ is the Zariski closure of the image $\bar{\lambda}_{\bar{\mathbb Q}}(\pi_1(A^o))$.

Now we have the monodromy action
\begin{equation*}
\tau:\pi_1(A^o)\to\mathrm{GL}(\pi_1(F)^{ab}\otimes_{\mathbb Z}\bar{\mathbb Q})
\end{equation*}
induced from the family $X^o\to A^o$.
Let $Z\subset \mathrm{GL}(\pi_1(F)^{ab}\otimes_{\mathbb Z}\bar{\mathbb Q})$ be the Zariski closure of the image $\tau(\pi_1(A^o))$.
Let $R(Z)\subset Z$ be the radical of $Z$. 
Since $X^o\to A^o$ is a locally trivial $C^\infty$ fibration, $\tau$ is dual to the local system $R^1(f_o)_*(\bar{\bQ})$. Note that $R^1(f_o)_*(\bar{\bQ})$, hence $\tau$ underlies an  admissible variation of mixed Hodge structures (cf. \cite{BZ14} for the definition). 
Then by Deligne's theorem (cf. \cite[Corollary 2]{And92} or \cite[4.2.9b]{Del71}), $R(Z)$ is unipotent. 
Let $H\subset \mathrm{GL}(\pi_1(F)^{ab}\otimes_{\mathbb Z}\bar{\mathbb Q})$ be an algebraic subgroup defined by
$$
H=\{\sigma \in \mathrm{GL}(\pi_1(F)^{ab}\otimes_{\mathbb Z}\bar{\mathbb Q}); \sigma K= K\},
$$
where $K=\mathrm{ker}(\pi_1(F)^{ab}\otimes_{\mathbb Z}\bar{\mathbb Q}\to \Pi\otimes_{\mathbb Z}\bar{\mathbb Q})$.
Then $\tau(\pi_1(A^o))\subset H$, hence $Z\subset H$.
Note that $\bar{\lambda}_{\bar{\mathbb Q}}$ factors through $\tau:\pi_1(A^o)\to H$. 
This induces a morphism $Z\to E$, which is
surjective, for $\bar{\lambda}_{\bar{\mathbb Q}}(\pi_1(A^o))$ is Zariski dense.
Hence we get a surjection $Z\to E$.
Hence by \cref{lem:20221015}, $R(E)$ is unipotent.
Since $E$ is commutative, we have $R(E)= E^o$.
Hence $E^o$ is unipotent.
\end{proof}

Let $Y\subset \mathrm{GL}(\mathbb C^n)$ be the Zariski closure of $
\rho(\pi_1(A))\subset \mathrm{GL}(\mathbb C^n)$.
Let $Y^o\subset Y$ be the identity component.
By \cref{claim:202210153}, we may apply \cref{claim:20221014} for 
$\widetilde{\Sigma}=\rho(\pi_1(A))\subset\Sigma$ to conclude that $Y^o$ is unipotent.
Since the image of $\pi_1(A)\to G/G'$ is Zariski dense, the commutativity of \eqref{eqn:202210141} implies $\mu(G/G')\subset Y$.
This is Zariski dense, in particular $Y^o=Y$.
By $G/G'= (U/G')\times T$, $\mu$ induces $\mu|_T:T\to Y$.
Since $Y$ is unipotent, this is trivial by \cref{lem:trivial morphism}. 
Hence the action of $T$ onto $G'/G''$ is trivial.
By Lemma \ref{claim:202210121}, $G$ decomposes as a direct product $G \cong U \times T$, and $G$ is nilpotent.
\end{proof}

\subsection{Proof of \cref{thm:VN}}  \label{s5}
\cref{thm:VN} is a consequence of \cref{cor:202304071,thm:202210123}. 
\begin{proof}[Proof of \cref{thm:VN}] 
Let $G$ be the Zariski closure of $\varrho$.
Let $G_0$ be the identity component of $G$, i.e., the connected component which contains the identity element.
Let $X'\to X$ be a finite \'etale cover   corresponding to the finite index subgroup $\varrho^{-1}(\varrho(\pi_1(X))\cap G_0(\bC))$ of $\pi_1(X)$.
The property of being special or $h$-special is preserved from $X$ to $X'$ (cf. \cref{special,lem:202304061}).
The restriction $\varrho|_{\pi_1(X')}:\pi_1(X')\to G_0(\bC)$ has Zariski dense image.
 Denote by $R(G_0)$   the radical of $G_0$, which is the unique maximal connected solvable normal algebraic subgroup.
 If $R(G_0)\neq G_0$, the quotient $G_0/R(G_0)$ is semisimple.
 Note that the  induced representation $\varrho':\pi_1(X')\to G_0/R(G_0)(\bC)$ is still Zariski dense.   By \cref{cor:202304071}, $X'$ is neither $h$-special nor  special which contradicts our assumption. This implies that $R(G_0)=G_0$.     
Therefore by \cref{thm:202210123}, $G_0$  decomposes as a desired direct product $G_0 \cong U \times T$, where $U$ is unipotent, and $T$ is an algebraic torus.
 In particular, $G_0$ is nilpotent.
 Hence $\varrho(\pi_1(X))$ is virtually nilpotent.

	If  $\varrho$ is assumed to be reductive, then $G$ is reductive. Hence
	$G_0$ is an algebraic torus.  It follows that $\varrho(\pi_1(X))$ is a virtually abelian group. 
\end{proof}
\subsection{Illustrative examples}\label{sec:examples} 
As mentioned in the introduction, the following example disproves \cref{conj:Campana}. 
We construct a quasi-projective surface that is both $h$-special and special, whose fundamental group is linear and nilpotent but not virtually abelian. 
 \begin{example}\label{example} 
	Fix $\tau\in\mathbb H$ from the upper half plane.
	Then $\mathbb C/<\mathbb Z+\mathbb Z\tau>$ is an elliptic curve.
	We define a nilpotent group $G$ as follows.
	$$
	G=\left\{g(l,m,n)=\begin{pmatrix}
		1 & 0 & m & n \\
		-m & 1 & -\frac{m^2}{2} & l \\
		0 & 0 & 1 & 0 \\
		0 & 0 & 0 & 1
	\end{pmatrix}
	\in\mathrm{GL}_4(\mathbb Z)
	\
	\vert 
	\
	l,m,n\in \mathbb Z
	\right\}
	$$
	Thus as sets $G\simeq \mathbb Z^3$.
	However, $G$ is non-commutative as direct computation shows:
	\begin{equation}\label{eqn:202210061}
		g(l,m,n)\cdot g(l',m',n')=g(-mn'+l+l',m+m',n+n').
	\end{equation}
	We define $C\subset G$ by letting $m=0$ and $n=0$.
	$$
	C=\left\{
	g(l,0,0)=
	\begin{pmatrix}
		1 & 0 & 0 & 0 \\
		0 & 1 & 0 & l \\
		0 & 0 & 1 & 0\\
		0 & 0 & 0 & 1
	\end{pmatrix}
	\in\mathrm{GL}_4(\mathbb Z)
	\
	\vert 
	\
	l\in \mathbb Z
	\right\}
	$$
	Then $C$ is a free abelian group of rank one, thus $C\simeq \mathbb Z$ as groups.
	By \eqref{eqn:202210061}, $C$ is a center of $G$.  
	We have an exact sequence
	$$
	1\to C\to G\to L\to 1,
	$$
	where $L\simeq \mathbb Z^2$ is a free abelian group of rank two.
	This is a central extension.
	The quotient map $G\to L$ is defined by $g(l,m,n)\mapsto (m,n)$.
	
	\begin{claim}
		$G$ is not almost abelian.
	\end{claim}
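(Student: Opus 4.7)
The plan is to establish a concrete non-triviality statement about commutators in $G$ and then to show that any finite-index subgroup must contain a pair of non-commuting elements obtained by taking sufficiently high powers of two explicit generators.

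First, I would compute the commutator bracket in $G$ from the multiplication rule \eqref{eqn:202210061}. Using that formula one checks $g(l,m,n)^{-1} = g(-l-mn,-m,-n)$, and a direct calculation gives
\begin{equation*}
[g(l,m,n),\,g(l',m',n')] \;=\; g(m'n - mn',\,0,\,0) \in C.
\end{equation*}
In particular $[G,G] = C$ and $G$ is $2$-step nilpotent with center $C$ and abelianization $G/C \cong L \cong \bZ^2$. Next I would record that powers of the two natural lifts of the generators of $L$ are trivial to compute: by induction from \eqref{eqn:202210061} with $m = 0$ or $n = 0$, one gets $g(0,0,1)^a = g(0,0,a)$ and $g(0,1,0)^b = g(0,b,0)$ for every $a,b \in \bZ$.

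Now assume, aiming at a contradiction, that $G$ is virtually abelian, i.e. there exists a finite-index subgroup $H \leq G$ that is abelian. Since $[G:H]$ is finite, there exist positive integers $a$ and $b$ with $g(0,0,1)^a \in H$ and $g(0,1,0)^b \in H$. By the power computation above, these elements equal $g(0,0,a)$ and $g(0,b,0)$ respectively. Applying the commutator formula yields
\begin{equation*}
\bigl[g(0,0,a),\,g(0,b,0)\bigr] \;=\; g(ab,\,0,\,0),
\end{equation*}
which is non-trivial in $C \cong \bZ$ because $ab \neq 0$. This contradicts the commutativity of $H$, so no such $H$ exists and $G$ is not virtually abelian.

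I do not anticipate any real obstacle here: the only computation that needs care is the commutator identity, which follows from two applications of \eqref{eqn:202210061} together with the inverse formula. The rest is a standard argument exploiting that a finite-index subgroup of a finitely generated group must absorb a fixed positive power of every element, combined with the fact that the commutator pairing $(m,n),(m',n') \mapsto m'n - mn'$ on $\bZ^2$ with values in $C$ remains non-degenerate after scaling the inputs by arbitrary non-zero integers.
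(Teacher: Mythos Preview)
Your proof is correct. The commutator formula $[g(l,m,n),g(l',m',n')]=g(m'n-mn',0,0)$ and the power formulas are accurate, and the pigeonhole argument that a finite-index subgroup contains a positive power of each element is standard and valid.

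The paper takes a slightly different route. Rather than singling out two explicit generators, it introduces the subgroups $G_{\mu,\nu}=\{g(l,m,n):\mu n=\nu m\}$, observes (from the same multiplication law) that the centralizer of $g(l,m,n)$ in $G$ is exactly $G_{m,n}$, and checks that $G_{m,n}$ has infinite index whenever $(m,n)\neq(0,0)$. A putative finite-index abelian subgroup $H$ must contain some $g(l,m,n)$ with $(m,n)\neq(0,0)$ (since $C$ has infinite index), and abelianity forces $H\subset G_{m,n}$, contradicting finite index. Your argument is more hands-on and arguably shorter: you avoid describing all centralizers and instead produce an explicit non-commuting pair inside any finite-index subgroup. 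The paper's approach, on the other hand, yields the extra structural information that every maximal abelian subgroup of $G$ is one of the $G_{\mu,\nu}$, though that is not needed for the Claim itself.
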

	
	\begin{proof}
		For $(\mu,\nu)\in\mathbb Z^2$, we set $G_{\mu,\nu}=\{ g(l,m,n);\ \mu n=\nu m\}$.
		Then by \eqref{eqn:202210061}, $G_{\mu,\nu}$ is a subgroup of $G$.
		By \eqref{eqn:202210061}, $g(l,m,n)$ commutes with $g(l',m',n')$ if and only if $g(l',m',n')\in G_{m,n}$.
		We have $C\subset G_{\mu,\nu}$ and $G_{\mu,\nu}/C=\{ (m,n)\in L; \mu n=\nu m\}$.
		Hence the index of $G_{\mu,\nu}\subset G$ is infinite for $(\mu,\nu)\not=(0,0)$.

		Now assume contrary that $G$ is almost abelian.
		The we may take a finite index subgroup $H\subset G$ which is abelian.
		We may take $g(l,m,n)\in H$ such that $(m,n)\not=(0,0)$.
		Then we have $H\subset G_{m,n}$.
		This is a contradiction, since the index of $G_{m,n}\subset G$ is infinite.
	\end{proof}

	Now we define an action $G\curvearrowright \mathbb C^2$ as follows:
	For $(z,w)\in\mathbb C^2$, we set
	$$
	\begin{pmatrix}
		z  \\
		w\\
		\tau \\
		1 
	\end{pmatrix}
	\mapsto
	\begin{pmatrix}
		1 & 0 & m & n \\
		-m & 1 & -\frac{m^2}{2} & l \\
		0 & 0 & 1 & 0 \\
		0 & 0 & 0 & 1\end{pmatrix}
	\begin{pmatrix}
		z  \\
		w\\
		\tau  \\
		1 
	\end{pmatrix}
	$$
	Hence
	$$
	g(l,m,n)\cdot (z,w)=\left(z+m\tau+n,-mz+w-\frac{m^2}{2}\tau+l\right).
	$$
	This action is properly discontinuous.
	We set $X=\mathbb C^2/G$.
	Hence 
	$\pi_1(X)=G.$ 
	Then $X$ is a smooth complex manifold.
	We have $\mathbb C^2/C\simeq \mathbb C\times \mathbb C^*$.
	The action $L\curvearrowright \mathbb C\times \mathbb C^*$ is written as 
	\begin{equation}\label{eqn:20221005}
		(z,\xi)\mapsto (z+m\tau+n,e^{-2\pi i mz-\pi im^2\tau}\xi),
	\end{equation}
	where $\xi=e^{2\pi iw}$.
	The first projection $\mathbb C\times \mathbb C^*\to \mathbb C$ is equivariant $L\curvearrowright \mathbb C\times \mathbb C^*\to \mathbb C\curvearrowleft L$.
	By this, we have $X\to E$, where $E=\mathbb C/<\mathbb Z+\mathbb Z\tau>$ is an elliptic curve.
	The action \eqref{eqn:20221005} gives the action on $\mathbb C\times \mathbb C$ by the natural inclusion $\mathbb C\times \mathbb C^*\subset \mathbb C\times \mathbb C$.
	We consider this as a trivial line bundle by the first projection $\mathbb C\times \mathbb C\to \mathbb C$. 
	We set $Y=(\mathbb C\times \mathbb C)/L$.
	This gives a holomorphic line bundle $Y\to E$. 
	By Serre's GAGA, $Y$ is algebraic.
	Hence $X=Y-Z$ is quasi-projective, where $Z$ is the zero section of $Y$. 
	\begin{claim}
		The quasi-projective surface $X$ is special and contains a Zariski dense entire curve. In particular, it is $h$-special.
	\end{claim}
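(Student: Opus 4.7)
The plan is to establish two things: first, to construct an explicit Zariski dense entire curve in $X$, and second, to verify Campana's specialness by analyzing all fibrations of birational models of $X$.

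\textbf{Zariski dense entire curve.} I would consider the entire curve $\phi:\mathbb{C}\to X$ induced by $z\mapsto (z,0)\in\mathbb{C}^2$. A direct computation using the given $G$-action gives
\[
g(l,m,n)\cdot (z,0)=\left(z+m\tau+n,\ -mz-\tfrac{m^2\tau}{2}+l\right),
\]
so after reparametrization, the $G$-orbit of the horizontal line $L_0=\mathbb{C}\times\{0\}\subset\mathbb{C}^2$ consists of the affine lines
\[
\ell_{l,m,n}=\left\{\left(z',\,-mz'+\tfrac{m^2\tau}{2}+mn+l\right):z'\in\mathbb{C}\right\},\qquad (l,m,n)\in\mathbb{Z}^3,
\]
whose slopes $-m$ take every integer value. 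Any Zariski closed analytic subset of $\mathbb{C}^2$ containing infinitely many affine lines with pairwise distinct slopes must equal $\mathbb{C}^2$; hence the $G$-orbit of $L_0$ is Zariski dense in $\mathbb{C}^2$, and $\phi(\mathbb{C})$ is Zariski dense in $X$. In particular, $X$ is $h$-special by \cref{lem:20230406}.

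\textbf{Specialness.} Since $\dim X=2$, I must check that for every proper birational modification $\widehat X\to X$ and every algebraic fiber space $f:\widehat X\to V$ with $\dim V\geq 1$ we have $\kappa(V,f)<\dim V$. For $\dim V=2$, I would use the projective compactification $\bar Y=\mathbb{P}(L\oplus\mathcal{O}_E)$ with boundary $D=Z_0+Z_\infty$ (two disjoint sections); a standard projective bundle computation gives $K_{\bar Y}+Z_0+Z_\infty\equiv 0$ numerically, so $\bar\kappa(X)\leq 0<2$, settling this case. For $\dim V=1$, after Stein factorization we obtain a fibration onto a smooth curve $C$, and I claim it factors through the quasi-Albanese $X\to E$: a fibration onto a curve of genus $\geq 1$ factors through $\mathrm{Alb}(X)=E$ by the universal property, and $\bar Y$ admits no fibration onto $\mathbb{P}^1$, since $\bar Y\to E$ is the unique $\mathbb{P}^1$-fibration of a $\mathbb{P}^1$-bundle of Picard rank $2$. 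As blowing up does not create new fibrations, every such $f$ factors through $X\to E$. Since $X\to E$ has smooth $\mathbb{C}^*$-fibers, the multiplicity divisor $\Delta(f)$ is empty and the orbifold base is $(E\mid 0)$ with $\bar\kappa=0<1$. Thus $X$ is special.

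The main obstacle I anticipate is the second step, specifically ruling out exotic fibrations $\widehat X\to\mathbb{P}^1$ that might arise after birational modification. The argument above reduces this to showing that the compact model $\bar Y$ has no $\mathbb{P}^1$-fibration other than the bundle projection, which follows from Mori theory on a $\mathbb{P}^1$-bundle of Picard rank $2$; alternatively one can invoke an orbifold $C_{n,m}$-type statement applied to the smooth fibration $X\to E$ with special fibers $\mathbb{C}^*$ and special base $E$.
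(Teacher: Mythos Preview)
Your entire-curve construction is correct in spirit, but the justification has a genuine gap. You argue that $G\cdot L_0$ is Zariski dense in $\bC^2$ (true), and then conclude that $\phi(\bC)=\pi(L_0)$ is Zariski dense in $X$. This step fails: the covering $\pi:\bC^2\to X$ is transcendental, so the preimage of an algebraic subvariety of $X$ is only a closed \emph{analytic} subset of $\bC^2$, and a proper closed analytic subset of $\bC^2$ can perfectly well contain infinitely many affine lines with pairwise distinct slopes. In fact $G\cdot L_0$ itself is such a set: it is a locally finite union of lines, since on any bounded region the constraint $\mathrm{Im}(w)=-m\,\mathrm{Im}(z)+\tfrac{m^2}{2}\mathrm{Im}(\tau)$ forces $|m|$ to be bounded. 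Your curve \emph{is} Zariski dense in $X$, but a correct argument must use more: over a generic point $[z_0]\in E$ the image $\phi(\bC)$ meets the fiber $\bC^*$ in the infinite set $\{e^{2\pi imz_0+\pi im^2\tau}:m\in\bZ\}$, whereas any proper algebraic curve in $X$ meets a generic fiber in only finitely many points. The paper sidesteps this entirely by building a \emph{metrically} dense entire curve in $\bC^2$ via Mittag--Leffler interpolation and pushing it down; classical density survives the quotient trivially, so no comparison of algebraic versus analytic closures is needed.

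For specialness, your classification approach can be made to work, but the sketch has soft spots: the assertion that ``blowing up does not create new fibrations'' is false as stated, and the reduction of the genus-$\geq 1$ case via the Albanese needs the computation $\mathrm{qAlb}(X)=E$ plus an argument that the fibration itself (not just a composite) factors. The paper's route is a one-line application of the first part: if $X$ were not special, some birational model would fiber over a curve with log-general-type, hence hyperbolic, orbifold base $(C,\Delta)$; composing with the Zariski-dense entire curve yields an orbifold entire curve into $(C,\Delta)$, a contradiction. This uses the entire curve directly and avoids any enumeration of fibrations or Picard-rank computations on $\overline{Y}$.
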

	\begin{proof}
We take a dense set $\{(x_n,y_n)\}_{n\in \bZ}\subset  \bC^2$. By  Mittag-Leffler interpolation we can find entire functions $f_1(z)$ and $f_2(z)$   such that $f_1(n)=x_n$ and   $f_2(n)=y_n$. Then $f:\bC\to \bC^2$ defined by $f=(f_1,f_2)$ is a metrically dense entire curve. 
	Note that $\pi:\bC^2\to X$ is the universal covering map. It follows that $\pi\circ f$  is  a metrically dense entire curve in  $X$. Hence $X$ is also $h$-special. 
	
	If $X$ is not special, then  by definition after replacing $X$ by a proper birational modification  there is an algebraic fiber space $g:X\to C$ from $X$ to a  quasi-projective curve such that the orbifold base $(C,\Delta)$ of $g$ defined in \cite{Cam11} is of log general type, hence hyperbolic.  However, the composition $g\circ \pi\circ f$ is a orbifold entire curve of  $(C,\Delta)$. This contradicts with the hyperbolicity of $(C,\Delta)$.  Therefore, $X$ is special. 
	\end{proof}
	
\end{example}

\begin{rem}\label{re:20230428}
Let $Y\to E$ be the line bundle described in \Cref{example}.
Here $Y=(\mathbb C\times \mathbb C)/L$ under the action given by \eqref{eqn:20221005}.
By the $\tau$-quasiperiodic relation, the Jacobi theta function $\vartheta(z,\tau)$ gives an equivariant section of the first projection $\mathbb C\times \mathbb C\to \mathbb C$.
		Hence the degree of the line bundle $Y\to E$ is equal to one.
Thus $Y$ is ample.
\end{rem}

\begin{rem}
The above $X$ in \Cref{example} is homotopy equivalent to a Heisenberg manifold which is a circle bundle over the 2-torus.
It is well known that Heisenberg manifolds have nilpotent fundamental groups.
\end{rem}

%\begin{example}
%	This example is $h$-special complex manifold with non virtually abelian fundamental group.

%	Let $E=\bC/\Lambda$ be an elliptic curve, where $\Lambda$ is a lattice generated by $1$ and $\tau$. Define a equivalent relation $\bC^*\times\bC^*\times \bC$ defined by  
%	$$
%	(\xi_1,\xi_2,z)\sim (\xi_1\xi^m_2,\xi_2,z+m+n\tau)
%	$$
%	The quotient is a holomorphic fiber bundle $X$ over $E$  with fibers $\bC^*\times\bC^*$. For the monodromy representation of this fiber bundle  $\varrho:\bZ^2\to Aut(\bZ^2)$,  $\varrho((1,0))$ is the identity map and  
%	\begin{equation*}
%	\varrho((0,1))=	\begin{bmatrix}
%		1 & 1\\
%		0 & 1
%	\end{bmatrix}
%	\end{equation*} 
%	It seems that $\pi_1(X)$ cannot be virtually abelian since the monodromy action distorts the fibers each time. 

%\end{example}

We will construct an example of $h$-special complex manifold with linear solvable, but non virtually nilpotent fundamental group. By \cref{thm:202210123}, it is thus non quasi-projective.
\begin{example} 
	We start from algebraic argument.
	Set
	\begin{equation*}
		M=	\begin{bmatrix}
			1 & 2\\
			1 & 3
		\end{bmatrix}
		\in\mathrm{SL}_2(\mathbb Z)
	\end{equation*} 	
	All the needed property for $M$ is contained in the following \cref{claim:202210091}.
	
	\begin{claim}\label{claim:202210091}
		For every non-zero $n\in \mathbb Z-\{0\}$, $M^n$ has no non-zero eigenvector in $\mathbb Z^2$.
	\end{claim}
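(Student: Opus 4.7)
The plan is to use elementary linear algebra over $\mathbb{Q}$, reducing the claim to the irrationality of the eigenvalues of $M^{n}$. The characteristic polynomial of $M$ is $\lambda^{2} - 4\lambda + 1$, so the eigenvalues of $M$ are $\lambda_{\pm} = 2 \pm \sqrt{3}$, which are distinct, irrational, and Galois conjugates in $\mathbb{Q}(\sqrt{3})$ satisfying $\lambda_{+}\lambda_{-} = 1$. Since the eigenvalues of $M$ are distinct, the eigenvalues of $M^{n}$ are precisely $\lambda_{+}^{n}$ and $\lambda_{-}^{n}$.

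The first step is to verify that $\lambda_{\pm}^{n}$ is irrational for every $n \in \mathbb{Z} \setminus \{0\}$. Writing $\lambda_{+}^{n} = a_{n} + b_{n}\sqrt{3}$ with $a_{n}, b_{n} \in \mathbb{Z}$ (which is legitimate because $\lambda_{+}$ and $\lambda_{+}^{-1} = \lambda_{-}$ both lie in $\mathbb{Z}[\sqrt{3}]$), Galois conjugation gives $\lambda_{-}^{n} = a_{n} - b_{n}\sqrt{3}$, and the norm relation yields the Pell-type identity $a_{n}^{2} - 3 b_{n}^{2} = (\lambda_{+}\lambda_{-})^{n} = 1$. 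If $b_{n} = 0$, then $a_{n}^{2} = 1$, so $\lambda_{+}^{n} = \pm 1$; but $\lambda_{+} > 1$ forces $n = 0$, a contradiction. Hence $b_{n} \neq 0$, so $\lambda_{\pm}^{n}$ is irrational, and in particular $M^{n}$ itself has two distinct eigenvalues.

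The second step is to conclude from the existence of a putative integer eigenvector. Assume, for contradiction, that $v = (a, b) \in \mathbb{Z}^{2} \setminus \{0\}$ satisfies $M^{n} v = \mu v$ for some scalar $\mu$, with $n \neq 0$. Since $M^{n}$ has integer entries, the vector $M^{n} v$ lies in $\mathbb{Z}^{2}$, so $\mu a$ and $\mu b$ are both integers. As $v \neq 0$, at least one of $a, b$ is non-zero, whence $\mu \in \mathbb{Q}$. But $\mu$ is an eigenvalue of $M^{n}$, so $\mu \in \{\lambda_{+}^{n}, \lambda_{-}^{n}\}$, contradicting the irrationality established in the previous step.

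I do not foresee a serious obstacle here: the argument is entirely elementary, and the only point requiring minor care is the uniform irrationality of $\lambda_{\pm}^{n}$, which is dispatched by the Pell identity $a_{n}^{2} - 3 b_{n}^{2} = 1$.
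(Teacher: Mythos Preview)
Your proof is correct and follows essentially the same approach as the paper: both argue that an integer eigenvector would force the eigenvalue $\alpha^n$ of $M^n$ to be rational, contradicting its irrationality. The paper simply asserts ``$\alpha^n\not\in\mathbb{Q}$ as direct computation shows,'' whereas you supply that computation explicitly via the Pell identity $a_n^2-3b_n^2=1$.
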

	
	\begin{proof}
		Fix $n\in \mathbb Z-\{0\}$.
		Assume contrary that there exists $v\in \mathbb Z^2\backslash \{ 0\}$, such that $M^nv=\alpha^nv$, where $\alpha$ is an eigenvalue of $M$.
		Then $\alpha^n\not\in\mathbb Q$ as direct computation shows. 
		Hence $\alpha^nv\not\in \mathbb Z^2$, while $M^nv\in\mathbb Z^2$.
		This is a contradiction.
	\end{proof}
	
	\begin{claim}\label{claim:202210092}
		Let $N\subset \mathbb Z^2$ be a submodule.
		Let $n\in \mathbb Z-\{0\}$.
		If $N$ is invariant under the action of $M^n$, then either $N=\{0\}$ or $N$ is finite index in $\mathbb Z^2$.
	\end{claim}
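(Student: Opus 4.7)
The plan is to proceed by the standard rank trichotomy for submodules of $\mathbb{Z}^2$ and to use Claim~\ref{claim:202210091} to rule out the intermediate case. Since $\mathbb{Z}^2$ is a free $\mathbb{Z}$-module of rank $2$, any submodule $N\subset\mathbb{Z}^2$ has rank $0$, $1$, or $2$. Rank $0$ gives $N=\{0\}$, and rank $2$ means $N\otimes\mathbb{Q}=\mathbb{Q}^2$, so $\mathbb{Z}^2/N$ is a finitely generated torsion abelian group, hence finite; that is, $N$ has finite index. So the only thing to eliminate is the rank-$1$ case.

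Suppose then that $N$ has rank $1$ and is $M^n$-invariant. Pick any nonzero $v\in N$. Because $N\otimes\mathbb{Q}$ is a $1$-dimensional $\mathbb{Q}$-vector space spanned by $v$ and $M^n v\in N\subset N\otimes\mathbb{Q}$, there is a rational number $c\in\mathbb{Q}$ with $M^n v = c\,v$. Thus $v\in\mathbb{Z}^2\setminus\{0\}$ is a (non-zero) eigenvector of $M^n$ over $\mathbb{Q}$, hence also over $\mathbb{Z}^2$ in the sense of Claim~\ref{claim:202210091}. This directly contradicts that claim, so the rank-$1$ case cannot occur.

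The whole argument is very short and the only non-formal input is Claim~\ref{claim:202210091}; there is essentially no obstacle to overcome, since the passage from $M^n$-invariance on a rank-$1$ sublattice to an eigenvector equation is automatic in dimension one. The only minor point to state carefully is that $c$ being a priori rational is already enough: Claim~\ref{claim:202210091} forbids any non-zero eigenvector in $\mathbb{Z}^2$ regardless of the nature of the eigenvalue.
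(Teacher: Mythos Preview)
Your proof is correct and follows essentially the same approach as the paper: both rule out the rank-$1$ (equivalently, $\dim N\otimes\mathbb{R}=1$) case by observing that a nonzero vector in such an $N$ would be an eigenvector of $M^n$ in $\mathbb{Z}^2$, contradicting Claim~\ref{claim:202210091}. The paper's version is simply terser, passing directly to $N\otimes_{\mathbb{Z}}\mathbb{R}$ without spelling out the rank trichotomy.
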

	
	\begin{proof}
		Suppose $N\not=\{0\}$.
		By \cref{claim:202210091}, we have $\dim_{\mathbb R}N\otimes_{\mathbb Z}\mathbb R=2$.
		Then $N$ is finite index in $\mathbb Z^2$.
	\end{proof}
	
	\begin{claim}\label{claim:202210093}
		Let $G$ be a group which is an extension 
		$$
		1\to \mathbb Z^2\to G\overset{p}{\to}\mathbb Z\to 1.
		$$
		Assume that the corresponding action $\mathbb Z\to {\rm Aut}(\bZ^2)$ is given by $n\mapsto M^n$.
		Then $G$ is not virtually nilpotent.
	\end{claim}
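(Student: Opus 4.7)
The plan is to argue by contradiction: suppose some finite-index subgroup $H \subseteq G$ is nilpotent, and derive a contradiction from the hyperbolicity of $M$. First I would set $A := H \cap \bZ^2$ and observe that $p(H) \subseteq \bZ$ is of finite index, so $p(H) = n\bZ$ for some $n \geq 1$; pick $t \in H$ with $p(t) = n$. Since $\bZ^2$ is abelian and normal in $G$, conjugation by $t$ on $\bZ^2$ depends only on $p(t)$, hence preserves $A$ and acts as $M^n$. The index $[G:H]$ being finite forces $A \neq 0$, and as $A$ is $M^n$-invariant, \cref{claim:202210092} ensures that $A$ has finite index in $\bZ^2$; in particular $A \otimes_{\bZ} \bQ = \bQ^2$.

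The heart of the argument is the commutator identity
\[
[t, e^a] = t\, e^a\, t^{-1} e^{-a} = e^{(M^n - I)a}, \qquad a \in A,
\]
where I write $\bZ^2$ additively as $\{e^a\}_{a \in \bZ^2}$. By induction on $k \geq 1$ this yields the containment $(M^n - I)^k A \subseteq \gamma_{k+1}(H)$ in the lower central series of $H$: having established $(M^n-I)^{k-1}A \subseteq \gamma_k(H)$, for any $a \in A$ one has $e^{(M^n-I)^{k-1}a} \in \gamma_k(H)$, and commutating with $t \in H$ lands in $\gamma_{k+1}(H)$ via the same identity. By \cref{claim:202210091}, $M^n$ has no nonzero eigenvector in $\bZ^2$; in particular $1$ is not an eigenvalue of $M^n$, so $M^n - I$ is invertible on $A \otimes_\bZ \bQ = \bQ^2$. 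Consequently $(M^n - I)^k A$ is of finite index, hence nonzero, in $A$ for every $k \geq 1$, which contradicts nilpotency of $H$.

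The only nontrivial conceptual point is extracting the full-rank property of $A$, which is precisely what \cref{claim:202210092} provides once one verifies the $M^n$-invariance; the remainder is a routine induction. I do not anticipate a serious obstacle beyond correctly formulating these containments, since the hyperbolic spectrum of $M$ makes the invertibility of $M^n - I$ immediate.
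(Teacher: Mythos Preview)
Your proof is correct. Both your argument and the paper's proceed by contradiction, assuming a finite-index nilpotent subgroup and invoking \cref{claim:202210091} and \cref{claim:202210092}, but the organization differs: the paper works with a \emph{central series} $N_0\supset N_1\supset\cdots$ inside $G'\cap\bZ^2$ (essentially the upper central series restricted to the abelian kernel) and shows inductively that no term can vanish, whereas you work with the \emph{lower central series} $\gamma_k(H)$ and explicitly exhibit the nonzero subgroup $(M^n-I)^kA$ inside each $\gamma_{k+1}(H)$ via the commutator identity $[t,e^a]=e^{(M^n-I)a}$. Your route is a bit more concrete---it names the obstructing elements directly rather than arguing by case analysis on whether $N_{i+1}=0$---while the paper's version stays closer to the abstract definition of nilpotency; both ultimately reduce to the invertibility of $M^n-I$ over $\bQ$.
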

	
	\begin{proof}
		Assume contrary that there exists a finite index subgroup $G'\subset G$ which is nilpotent.
		Then there exists a central sequence 
		$$G'\cap \mathbb Z^2=N_0\supset N_1\supset N_2\supset \cdots\supset N_{k+1}=\{0\},$$ 
		i.e., for each $i=0,\ldots, k$, $N_i\subset G'$ is a normal subgroup and $N_i/N_{i+1}\subset G'/N_{i+1}$ is contained in the center of $G'/N_{i+1}$.
		By induction on $i$, we shall prove that $N_i\subset \mathbb Z^2$ is a finite index subgroup.
		For $i=0$, this is true by $N_0=G'\cap \mathbb Z^2$.
		We assume that $N_i\subset \mathbb Z^2$ is finite index.
		There exists $l\in\mathbb Z_{>0}$ such that $p(G')=l\mathbb Z$, where $p:G\to\mathbb Z$.
		Then we have an exact sequence
		$$
		1\to G'\cap \mathbb Z^2\to G'\to l\mathbb Z\to 1.
		$$
		Since $N_{i+1}\subset G'$ is normal, $N_{i+1}\subset \mathbb Z^2$ is invariant under the action $M^l$.
		By \cref{claim:202210092}, either $N_{i+1}=\{0\}$ or $N_{i+1}\subset \mathbb Z^2$ is finite index.
		In the second case, we complete the induction step.
		Hence it is enough to show $N_{i+1}\not=\{0\}$.
		So suppose $N_{i+1}=\{0\}$.
		Note that $N_i/N_{i+1}$ is contained in the center of $G'/N_{i+1}$.
		Hence $N_i$ is contained in the center of $G'$.
		So $M^l$ acts trivially on $N_i$.
		This is a contradiction (cf. \cref{claim:202210091}).
		Hence $N_{i+1}\not=\{0\}$.
		Thus we have proved that $N_i\subset \mathbb Z^2$ is a finite index subgroup.
		This contradicts to $N_{k+1}=\{0\}$.
		Thus $G$ is not virtually nilpotent.
	\end{proof}
	
	Now for $n\in\mathbb Z$, we define integers $a_n$, $b_n$, $c_n$ ,$d_n$ by
	\begin{equation*}
		\begin{bmatrix}
			a_n & b_n\\
			c_n & d_n
		\end{bmatrix}
		=M^n
	\end{equation*} 	
	Define a $\bZ$-action on $\bC^*\times\bC^*\times \bC$ by  
	$$
	(\xi_1,\xi_2,z)\mapsto (\xi_1^{a_n}\xi^{b_n}_2,\xi_1^{c_n}\xi^{d_n}_2,z+n).
	$$
	The quotient by this action is a holomorphic fiber bundle $X$ over $\mathbb C^*$  with fibers $\bC^*\times\bC^*$. For the monodromy representation of this fiber bundle  $\varrho:\bZ\to \mathrm{Aut}(\bZ^2)$,  
	\begin{equation*}
		\varrho(n)=M^n
	\end{equation*} 
	We have
	$$
	1\to \pi_1(\mathbb C^*\times \mathbb C^*)\to \pi_1(X)\to \pi_1(\mathbb C^*)\to 1
	$$
	Hence $\pi_1(X)$ is solvable.
	By \cref{claim:202210093}, $\pi_1(X)$ is not virtually nilpotent.
	
\end{example}

\begin{example}\label{example:20221105}
	This example is a quasi-projective surface $X$ with maximal quasi-Albanese dimension such that $X$ is $h$-special, special, and $\bar{\kappa}(X)=1$.
	
	Let $C_1$ and $C_2$ be elliptic curves which are not isogenus.
	Set $A=C_1\times C_2$.
	Let $p_1:A\to C_1$ and $p_2:A\to C_2$ be the first and second projections, respectively.
	Let $\widetilde{A}=\mathrm{Bl}_{(0,0)}A$ be the blow-up with respect to the point $(0,0)\in A$.
	Let $E\subset \widetilde{A}$ be the exceptional divisor and let $D\subset \widetilde{A}$ be the proper transform of the divisor $p_1^{-1}(0)\subset A$.
	Set $X=\widetilde{A}-D$.
	Then by \cref{lem:same Kd}, we have $\bar{\kappa}(X)=\bar{\kappa}(X-E)$.
	On the other hand, we have $X-E=(C_1-\{0\})\times C_2$.
	Hence $\bar{\kappa}(X)=1$.
	
	Next we construct a Zariski dense entire curve $f:\mathbb C\to X$.
	Let $\pi_1:\mathbb C\to C_1$ and $\pi_2:\mathbb C\to C_2$ be the universal covering maps.
	We assume that $\pi_1(0)=0$ and $\pi_2(0)=0$.
	Set $\Gamma=\pi_1^{-1}(0)$, which is a lattice in $\mathbb C$.
	We define an entire function $h(z)$ by the Weierstrass canonical product as follows
	$$
	h(z)=\prod_{\omega\in \Gamma}\left( 1-\frac{z}{\omega}\right)e^{P_2(z/\omega)}.
	$$
	We consider $\pi_2\circ h:\mathbb C\to C_2$.
	Then for $\omega\in \Gamma$, we have $\pi_2\circ h(\omega)=0$.
	Hence $\Gamma\subset (\pi_2\circ h)^{-1}(0)$.
	Thus $\pi_1^{-1}(0)\subset (\pi_2\circ h)^{-1}(0)$.
	We define $g:\mathbb C\to A$ by $g(z)=(\pi_1(z),\pi_2\circ h(z))$.
	Then $g^{-1}(p_1^{-1}(0))\subset g^{-1}((0,0))$.
	Let $f:\mathbb C\to \widetilde{A}$ be the map induced from $g$.
	Then $f^{-1}(D+E)\subset f^{-1}(E)$.
	By $\pi_1'(z)\not=0$ for all $z\in \mathbb C$, we have $f^{-1}(D\cap E)=\emptyset$.
	Hence $f^{-1}(D)=\emptyset$.
	This yields $f:\mathbb C\to X$.
	By the Bloch-Ochiai theorem, the Zariski closure of $g:\mathbb C\to A$ is a translate of an abelian subvariety of $A$.
	Since $C_1$ and $C_2$ are not isogenus, non-trivial abelian subvarieties are $A$, $C_1\times\{0\}$ and $\{0\}\times C_2$.
	Hence $f$ is Zariski dense. 
	Hence $X$ is $h$-special (cf. \cref{lem:20230406}).
	
	Next we show that $X$ is special.
	If $X$ is not special, then  by definition after replacing $X$ by a proper birational modification, there is an algebraic fiber space $g:X\to C$ from $X$ to a  quasi-projective curve such that the orbifold base $(C,\Delta)$ of $g$ defined in \cref{sec:spechspecial} is of log general type, hence hyperbolic.  However, the composition $g\circ f$ is a orbifold entire curve of  $(C,\Delta)$. This contradicts with the hyperbolicity of $(C,\Delta)$.  Therefore, $X$ is special. \end{example}
\begin{rem}
Since the submission of the initial version of this paper to the arXiv, Aguilar and Campana have  obtained another construction of special quasi-projective surfaces whose  fundamental groups are nilpotent but not virtually abelian.  Specifically, they consider the complement of the zero section of a holomorphic line bundle $L$ over an elliptic curve $B$, where the first Chern class $c_1(L) \neq 0$. We refer the interested reader   to \cite[Example 24]{AC25} for more details. 
\end{rem}
\section[A structure theorem]{A structure theorem}      
In this section, we prove \cref{thm:20230510}.  
\subsection{A structure theorem}
Before going to prove this, we prepare the following generalization of a structure theorem for quasi-projective varieties of maximal quasi-albanese dimension in \cref{lem:abelian pi0}. 
For a quasi-projective variety $Y$, we set
$$\Spalg(Y) := \overline{\bigcup_{V} V}^{\mathrm{Zar}},$$
where $V$ ranges over all positive-dimensional closed subvarieties of $Y$ which are not of log general type.

\begin{thm}\label{thm:202305101} 
Let $X$ be a smooth quasi-projective variety and let $g:X\to \cA\times Y$ be a morphism, where $\cA$ is a semi-abelian variety and $Y$ is a smooth quasi-projective variety such that $\Spalg(Y)\subsetneqq Y$.
Let $p:X\to Y$ be the composition of $g:X\to \cA\times Y$ and the second projection $\cA\times Y\to Y$.
Assume that $p:X\to Y$ is dominant and $\dim g(X)=\dim X$.
Then after replacing $X$ by a finite \'etale cover and a birational modification, there are a semiabelian variety $A$, a smooth quasi-projective variety $V$  and a birational morphism $a:X\to V$  such that the following commutative diagram holds:
		\begin{equation*} 
			\begin{tikzcd}
				X \arrow[rr,    "a"] \arrow[dr, "j"] & & V \arrow[ld, "h"]\\
				& J(X)&
			\end{tikzcd}
		\end{equation*}
		where $j$ is the logarithmic Iitaka fibration of $X$ and $h:V\to J(X)$ is a  locally trivial fibration with fibers isomorphic to $A$.  
Moreover, for a   general fiber $F$ of $j$, $a|_{F}:F\to A$ is proper in codimension one.  
\end{thm}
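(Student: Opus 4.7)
Let $j : X \to J(X)$ denote the logarithmic Iitaka fibration of $X$; after a birational modification of $X$ we may assume $j$ is a morphism between smooth quasi-projective varieties, whose general fiber $F$ is smooth with $\bar{\kappa}(F) = 0$ and $\dim J(X) = \bar{\kappa}(X)$. Write $q := \pi_{1} \circ g : X \to \cA$, where $\pi_{1}$ is the first projection of $\cA \times Y$. The plan is, in order: (i) show that $p$ factors rationally through $j$; (ii) identify $q(F)$ with a translate of a fixed semi-abelian subvariety $A \subset \cA$ via \cref{prop:Koddimabb} and a rigidity argument; (iii) build $V$ as a locally trivial $A$-bundle over $J(X)$ pulled back from the quotient $\cA \to \cA/A$; (iv) arrange that $a|_{F} : F \to A$ is birational after finite étale cover, at which point \cref{lem:abelian pi0} supplies properness in codimension one.

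For (i), let $F$ be a general fiber of $j$ and set $W := \overline{p(F)} \subset Y$. If $W$ were positive-dimensional and of log general type, then Viehweg's additivity $C^{+}_{n,m}$ applied to the Stein factorization of $p|_{F} : F \to W$ would give $\bar{\kappa}(F) \geq \bar{\kappa}(W) = \dim W > 0$, contradicting $\bar{\kappa}(F) = 0$. Thus either $W$ is a point, or $W$ is positive-dimensional and not of log general type, in which case $W \subset \Spalg(Y)$ by definition. If the second alternative held for every general $F$, the dominance of $p$ would force $p(X) \subset \Spalg(Y)$, contradicting $\Spalg(Y) \subsetneqq Y$. Hence for general $F$ one has $\dim p(F) = 0$, and $p$ factors rationally as $\psi \circ j$ for some $\psi : J(X) \dashrightarrow Y$.

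For (ii)--(iv): since $g$ is generically finite (from $\dim g(X) = \dim X$) and $g(F) \subset \cA \times \{p(F)\}$, the restriction $q|_{F} : F \to q(F)$ is generically finite with $\dim q(F) = \dim F$. By \cref{prop:Koddimabb} we have $\bar{\kappa}(q(F)) \geq 0$, while the generically finite pullback of log pluricanonical forms gives $\bar{\kappa}(q(F)) \leq \bar{\kappa}(F) = 0$. Hence $\bar{\kappa}(q(F)) = 0$, and the equality clause of \cref{prop:Koddimabb} forces $q(F)$ to be a translate of a semi-abelian subvariety $A_{F} \subset \cA$. Since the semi-abelian subvarieties of $\cA$ form a countable collection whereas $\{A_{F}\}$ varies algebraically in $F$, a standard rigidity argument forces $A_{F}$ to be a constant $A \subset \cA$ on a dense open of $J(X)$. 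The composition $X \xrightarrow{q} \cA \to \cA/A$ then contracts every general fiber of $j$ to a point, so it descends rationally to $\bar{\phi} : J(X) \dashrightarrow \cA/A$; after a further birational modification of $X$ and $J(X)$ making $\bar{\phi}$ a morphism, set
\[
V := J(X) \times_{\cA/A} \cA,
\]
which is a locally trivial $A$-bundle $h : V \to J(X)$, and let $a : X \to V$ be the canonical factorization of $(j, q)$. To upgrade $a|_{F}$ to birational, pass to a finite étale cover of $X$ trivializing the generic degree of $a|_{F}$, constructed from the Stein factorization of $a$ relative to $h$ (possibly after replacing $A$ by an isogenous semi-abelian variety and $V$ by the corresponding $\widetilde{A}$-bundle). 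Once $a|_{F}$ is birational with $\bar{\kappa}(F) = 0$ and target $A$ semi-abelian, \cref{lem:abelian pi0} produces, for each general $F$, a closed subset of codimension at least two in $A$ over which $a|_{F}$ is an isomorphism, which is precisely properness in codimension one.

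The most delicate step is the finite étale cover in (iv): the covering trivializing $a|_{F}$ must be coherent with the $A$-bundle structure of $V$ across all fibers, typically necessitating a replacement of $A$ by an isogenous $\widetilde{A}$ and pulling back $V$ accordingly; one must simultaneously check that the modified $X$ still satisfies the hypotheses so that the construction of $V$ remains well defined, and that the rigidity argument in (ii) genuinely produces a single $A$ rather than merely controlling $A_{F}$ up to countably many choices.
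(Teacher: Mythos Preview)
Your outline matches the paper's strategy closely---steps (i)--(iii) correspond to the paper's argument that $p$ contracts Iitaka fibers and to its construction of $V$ as a fiber product over a quotient of $\cA$---but step (iv) is underspecified in a way that matters. A relative Stein factorization $X \to S \to V$ gives $S \to V$ finite, but this does not by itself furnish a finite \'etale cover of $X$, and there is no reason $S \to V$ should be \'etale. The paper's mechanism is different and is carried out \emph{before} constructing $V$: for very general $t$ the map $\alpha|_{X_t}$ factors as a birational morphism $X_t \to C$ (already proper in codimension one by \cref{lem:abelian pi0}) followed by an isogeny $C \to B \subset \cA$ of semi-abelian varieties, and a separate lemma (\cref{lem:finite cover}) then produces an isogeny $\cA' \to \cA$ such that $C \times_\cA \cA'$ is a disjoint union of copies of $C$ with $C \hookrightarrow \cA'$. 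Pulling $X$ back along $\cA' \to \cA$ is the required finite \'etale cover, after which the Iitaka fibers map birationally onto translates of $C \subset \cA'$; only then does one set $V = J(X) \times_T W$ with $W = \overline{\alpha(X)}$ and $T = W/C$. Your acknowledged worry about fiberwise coherence is precisely what this isogeny lemma resolves, and without it the step is incomplete.

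Second, you conclude properness in codimension one ``for each general $F$'' directly from \cref{lem:abelian pi0}, but that lemma requires $\bar{\kappa}(F) = 0$, which is known only for \emph{very general} $F$. The paper upgrades to general $F$ by a further argument: choose a partial compactification $\overline{X}$ with $\bar{a}: \overline{X} \to V$ proper and set $\Xi := \bar{a}(\overline{X} \setminus X)$; then properness in codimension one of $a|_{X_t}$ amounts to $\codim(\Xi \cap V_t, V_t) \geq 2$, which holds for very general $t$ by the above and then for general $t$ by semi-continuity of fiber dimension. Without this step your conclusion holds only on a countable intersection of dense opens, not on a Zariski open set.
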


\begin{proof}
Consider the logarithmic Iitaka fibration $j: X\dashrightarrow J(X)$. We may replace $X$ by a birational modification such that $j$ is regular.    
Write $X_t:=j^{-1}(t)$.

\begin{claim}\label{claim:20230518}
 $p(X_t)$ is a point for very generic $t\in J(X)$.
 \end{claim}
 
 \begin{proof}
Since $p:X\to Y$ is dominant and $\Spalg(Y)\subsetneqq Y$, we have $\overline{p(X_t)}\not\subset \Spalg(Y)$ for generic $t\in J(X)$.
Hence, $\overline{p(X_t)}$ is of log general type for generic $t\in J(X)$.
Now we take very generic $t\in J(X)$.
To show that $p(X_t)$ is a point, we assume contrary that $\dim p(X_t)>0$.
Then $\bar{\kappa}(\overline{p(X_t)})>0$.
Since $\bar{\kappa}(X_t)=0$, general fibers of $p|_{X_t}:X_t\to \overline{p(X_t)}$ has  non-negative logarithmic Kodaira dimension.  By \cite[Theorem 1.9]{Fuj17} it follows that $\bar{\kappa}(X_t)\geq \bar{\kappa}(\overline{p(X_t)})>0$. We obtain the  contradiction. 
Thus $p(X_t)$ is a point. 
\end{proof}

Let $\alpha:X\to \cA$ be the composition of $g:X\to \cA\times Y$ and the first projection $\cA\times Y\to \cA$.
	Since $\dim X=\dim g(X)$,  one has $\dim X_t=\dim \alpha(X_t)$ for general $t\in J(X)$ by \cref{claim:20230518}.		
	For very generic $t\in J(X)$, note that $\bar{\kappa}(X_t)=0$, hence by \cref{prop:Koddimabb}, the closure of $\alpha(X_t)$ is a translate of a  semi-abelian variety $A_t$  of $\cA$.  
Let $\Lambda \subset J(X)$ denote the set of all such points $t$ that additionally satisfy \cref{claim:20230518}.
	Note that $\cA$ has only at most countably many  semi-abelian subvarieties, it follows that $A_t$ does not depend on very general $t$ which we denote by $B$.

Indeed, we may take $B$ so that the set of $t \in J(X)$ satisfying $A_t = B$ has positive measure; we denote this set by $\Lambda' \subset \Lambda$.
	Let $q:X\to (\cA/B)\times J(X)$ be the natural map.
	Since $q(X)$ is a constructible subset of $(\mathcal{A}/B) \times J(X)$, we may take a locally closed subset $Z \subset (\mathcal{A}/B) \times J(X)$ that is contained in $q(X)$ as a dense subset.
	If $t \in \Lambda'$, then the fiber $\xi^{-1}(t)$ of the natural map $\xi: Z \to J(X)$ consists of a single point.
	Hence the fact that $\Lambda'$ has positive measure implies the existence of a dense Zariski open set $U \subset J(X)$ over which $\xi: Z \to J(X)$ is bijective.
	This implies that $A_t\subset B$ for every $t\in U\cap\Lambda$.
Next, consider the map $q: X \to (\mathcal{A}/B) \times J(X)$. 
Over the subset $\xi^{-1}(U \cap \Lambda')$, the fibers of $q$ have dimension $\dim B$.
Since $\xi^{-1}(U \cap \Lambda')$ has positive measure, it follows that the general fibers of $q: X \to (\mathcal{A}/B) \times J(X)$ over $Z$ have dimension $\dim B$.
Hence, for a general $t \in J(X)$ that belongs to $\Lambda$, we have $B \subset A_t$.
Therefore we have $B=A_t$ for very general $t\in J(X)$ as desired.

	\begin{claim}\label{claim:20221105}
		There are 
		\begin{equation*}
			\begin{tikzcd}
				X' \arrow[r] \arrow[d, "\gamma"] & X\arrow[d,"\alpha"]\\
				\cA' \arrow[r] & \cA
			\end{tikzcd}
		\end{equation*}
		where the two rows are a finite     \'etale cover $X'\to X$ and an isogeny $\cA'\to \cA$  such that for a very general fiber $F$ of the logarithmic Iitaka fibration $j':X'\to J(X')$ of $X'$,  $\gamma|_{F}:F\to \cA'$ is mapped birationally to a translate of a semiabelian subvariety $C$ of $\cA'$, and the induced map $F\to C$ is proper in codimension one.
	\end{claim}
	\begin{proof}
		For a very general fiber $X_t$ of $j$,  we know that $\alpha|_{X_t}:X_t\to \cA$ factors through a birational morphism $X_t\to C$ which is proper in codimension one and an isogeny   $C\to B$.  
 Let $\cA'\to \cA$ be the isogeny in \cref{lem:finite cover}  below such that $C\times_{\cA}\cA'$ is a disjoint union of $C$ and the natural morphism of $C\to \cA'$ is injective.  
 It follows that  for a connected component $X'$ of $X\times_A\cA'$, the fiber of $X'\to J(X)$ at $t$ is a disjoint union of $X_t$.  
 Consider  the quasi-Stein factorisation of $X'\to J(X)$  and we obtain an algebraic fiber space $X'\to I$ and a finite morphism $\beta:I\to J(X)$.   
 Then for each $t'\in I$ with $t=\beta(t')$, the fiber $X'_{t'}$ of $X'\to I$ is isomorphic to $X_t$ hence it has zero logarithmic Kodaira dimension.   
 By the universal property of logarithmic Iitaka fibration,   $X'_{t'}$  is contracted by the logarithmic Iitaka fibration of $X'\to J(X')$. 
 Since $\bar{\kappa}(X')=\bar{\kappa}(X)$ by \cref{lem:same Kd}, it follows that $X'\to I$ is the logarithmic Iitaka fibration of $X'$.   Moreover, by our construction, for the natural morphism $\gamma:X'\to \cA'$,  the induced map $\gamma|_{X'_{t'}}:X'_{t'}\to \cA'$ is mapped birationally to a translate of $C\subset \cA'$, and proper in codimension one.
 \end{proof}

We replace $X$ and $\cA$ by the above \'etale covers.   
Write $W$ to be the closure of $\alpha(X)$. Then $W$ is invariant under the action of $C$ by the translate.  Denote by $T:=W/C$. The natural morphism $X\to T$ contracts general fibers of $j$. Therefore after replacing by a birational model of $X\to J(X)$, one has
	\begin{equation*}
		\begin{tikzcd}
			X \arrow[r] \arrow[d] &  W\arrow[d] \\
			J(X)  \arrow[r] & T
		\end{tikzcd}
	\end{equation*} 
	Consider the natural morphism $a: X\to J(X)\times_T W$. Then it is birational by the above claim.   Moreover,  for a very general fiber $X_t$, the morphism $a|_{X_t}:X_t\to (J(X)\times_T W)_t$ 
	 is proper in codimension one by \cref{claim:20221105}.  
	 	 
Set $V:=J(X)\times_T W$. 	 Let $\overline{X}$ be a partial compactification of $X$ such that $a:X\to V$ extends to  a proper morphism $\bar{a}:\overline{X}\to V$.   Then $\Xi:=\bar{a}(\overline{X}\backslash X)$ is a Zariski closed subset of $V$.   By the above result  for a very general fiber $V_t$ of the fibration $V\to J(X)$  we know that $V_t\cap \Xi$ is of codimension at least two in $V_t$. By the semi-continuity it holds for a \emph{general} fiber $V_t$.   Since $ \overline{X}\backslash \bar{a}^{-1}(\Xi)\subset X$,  we conclude that for a  general fiber $X_t$ of $X\to J(X)$, the morphism $a|_{X_t}:X_t\to V_t$ 
	 is proper in codimension.  The theorem is proved. 
\end{proof}

\begin{lem}\label{lem:finite cover}
	Let $A$ be a semiabelian variety and let $B$ be a semiabelian subvariety of $A$. Let  $C\to B$ be a finite \'etale cover.  Then there is an isogeny $A'\to A$ such that   $C\times_AA'$ is a disjoint union of $C$ and the natural morphism $C\to A'$ is injective. 
\end{lem}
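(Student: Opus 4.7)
The strategy is to choose the isogeny $A'\to A$ to correspond to a suitable finite-index subgroup of $\pi_1(A)$. Every semi-abelian variety is a $K(\pi,1)$ whose fundamental group is a finitely generated free abelian group, and the inclusion $\pi_1(B)\hookrightarrow\pi_1(A)$ is saturated: its cokernel $\pi_1(A/B)$ is torsion-free. Writing $\Lambda:=\pi_1(A)$, $\Lambda_B:=\pi_1(B)$, $\Lambda_C:=\pi_1(C)$, $K:=\ker(C\to B)$, and $n:=|K|=[\Lambda_B:\Lambda_C]$, I set $\Lambda':=n\Lambda+\Lambda_C$. The key lattice computation is that $\Lambda'\cap\Lambda_B=\Lambda_C$: saturation gives $n\Lambda\cap\Lambda_B=n\Lambda_B$, and $n\Lambda_B\subset\Lambda_C$ because $\Lambda_B/\Lambda_C$ has exponent dividing its order $n$.

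Let $A'\to A$ be the connected \'etale cover corresponding to $\Lambda'\subset\Lambda$; it is an isogeny of semi-abelian varieties, and since $\pi_1(A)$ is abelian it is Galois with group $\Lambda/\Lambda'$. The pullback $B\times_A A'$ is then the \'etale cover of $B$ associated to $\Lambda'\cap\Lambda_B=\Lambda_C$, so its identity component is isomorphic to $C$, embedded in $A'$ via the map $C\hookrightarrow A'$ induced by $\Lambda_C\subset\Lambda'$. The remaining components are translates of this $C$ inside $A'$ by elements of the Galois group, each isomorphic to $C$ as a cover of $B$ (through a translate of the map $\pi:C\to B$).

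Writing $B\times_A A'=\bigsqcup_i C_i$ with each $C_i$ a translate of $C$ in $A'$ mapping to $B$ with translation constant $b_i\in B$, we have
\[
C\times_A A'=C\times_B(B\times_A A')=\bigsqcup_i(C\times_B C_i),
\]
and for each $i$, choosing $c_i\in \pi^{-1}(b_i)$ yields
\[
C\times_B C_i=\{(c,c')\in C\times C_i:\pi(c)=\pi(c')+b_i\}=\bigsqcup_{k\in K}\{(c,c-c_i-k):c\in C\},
\]
a disjoint union of $|K|$ copies of $C$, each embedded injectively into $C_i\subset A'$ via the second projection. The total decomposition of $C\times_A A'$ is therefore a disjoint union of copies of $C$, all embedded injectively into $A'$. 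The main technical point is the inclusion $n\Lambda_B\subset\Lambda_C$, which encodes the finiteness of the order of $K$; the remaining steps are linear algebra of lattices and the standard dictionary between finite \'etale covers of semi-abelian varieties and finite-index subgroups of their fundamental groups.
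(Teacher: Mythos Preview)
Your proof is correct and follows essentially the same route as the paper: both arguments pass to the fundamental-group lattices $\Lambda_C\subset\Lambda_B\subset\Lambda$ and choose a finite-index sublattice $\Lambda'\subset\Lambda$ with $\Lambda'\cap\Lambda_B=\Lambda_C$, then read off the conclusion from the covering-space dictionary. The only difference is cosmetic: the paper simply asserts the existence of such a $\Gamma'$ (implicitly using a splitting $\Lambda=\Lambda_B\oplus\Lambda_0$ and taking $\Gamma'=\Lambda_C\oplus\Lambda_0$, which makes $B\times_A A'$ connected and equal to $C$, so that $C\times_A A'\cong C\times_B C$ directly), whereas you give the explicit choice $\Lambda'=n\Lambda+\Lambda_C$ and carefully verify the intersection condition via saturation; your $B\times_A A'$ may then have several components, which you correctly account for.
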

\begin{proof}
	We may write $A=\bC^n/\Gamma$ where $\Gamma$ is a lattice in $\bC^n$ such that there is a natural isomorphic $i: \pi_1(A)\to \Gamma$. Then there is a $\bC$-vector space $V\subset \bC^n$ such that $B=V/V\cap \Gamma$. It follows that $i({\rm Im}[\pi_1(B)\to \pi_1(A)])=\Gamma\cap V$.   Since $C\to B$ is an isogeny, then $i({\rm Im}[\pi_1(C)\to \pi_1(A)])$ is a finite index subgroup of $\Gamma\cap V$.  Let $\Gamma'\subset \Gamma$ be the finite index subgroup such that $\Gamma'\cap V=i({\rm Im}[\pi_1(C)\to \pi_1(A)])$. Therefore, for the semiabelian variety $A':=\bC^n/\Gamma'$,   the morphism $C\to A$  lifts to $C\to A'$, and it is moreover injective. Then the base change $C\times_A A'$ is identified with $C\times_BC$, which is a disjoint union of $C$.  The lemma is proved. 
\end{proof}
We recall a result proven in \cite{CDY22}, that is based on \cref{main2}. 
 \begin{lem}[\protecting{\cite[Lemma 7.1]{CDY22}}]\label{lem:202305101} 
	Let $X$ be a quasi-projective normal variety and let $\varrho:\pi_1(X)\to {\rm GL}_N(\bC)$ be a reductive and big representation.  
	Then there exist 
	\begin{itemize}
		\item a semi-abelian variety $A$,
		\item a smooth quasi projective variety $Y$ satisfying $\Spalg(Y)\subsetneqq Y$, 
		\item a birational modification $\widehat{X}'\to \widehat{X}$ of a finite \'etale cover $\widehat{X}\to X$,
		\item a dominant morphism $g:\widehat{X}'\to A\times Y$
	\end{itemize}
	such that $\dim g(\widehat{X}')=\dim \widehat{X}'$.  Moreover $p:\widehat{X}'\to Y$ is dominant, where $p$ is the composite of $g$ and the second projection $A\times Y\to Y$.
	\qed
\end{lem}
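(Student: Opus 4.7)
My plan is to split $\varrho$ into a semisimple and a torus piece via Jordan–Chevalley, and then to handle each piece with a different tool. Let $G$ denote the Zariski closure of $\varrho(\pi_1(X))$; by hypothesis $G$ is reductive. Passing to the finite étale cover of $X$ corresponding to $\varrho^{-1}(G^\circ)$, I may assume $G = G^\circ$ is connected. Write $G = R \cdot G_{\mathrm{ss}}$ with $R = Z(G)^\circ$ a central torus, $G_{\mathrm{ss}} = [G, G]$ semisimple, and $R \cap G_{\mathrm{ss}}$ finite. The two projections give a torus-valued representation $\varrho_{\mathrm{ab}} : \pi_1(X) \to (G/G_{\mathrm{ss}})(\mathbb{C}) \cong (\mathbb{C}^\ast)^k$ and a semisimple representation $\varrho_{\mathrm{ss}} : \pi_1(X) \to (G/R)(\mathbb{C})$, each of which I treat separately.

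For the torus piece, $\varrho_{\mathrm{ab}}$ factors through $H_1(X, \mathbb{Z})/\mathrm{torsion}$, so by the universal property of the quasi-Albanese morphism it comes from a morphism $\alpha : X \to A$, where $A$ is a quotient of the quasi-Albanese variety and hence a semi-abelian variety. For the semisimple piece, I apply \cref{lem:kollar} to $\varrho_{\mathrm{ss}}$, obtaining a finite étale cover $\widehat{X} \to X$, a birational proper morphism $\widehat{X}' \to \widehat{X}$, a dominant morphism $f : \widehat{X}' \to Y$ with connected general fibers onto a smooth quasi-projective $Y$, and a big representation $\tau : \pi_1(Y) \to (G/R)(\mathbb{C})$ with $f^\ast \tau$ equal to the pullback of $\varrho_{\mathrm{ss}}$. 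Because $f$ has connected general fibers and $\widehat{X}\to X$ is finite étale, the image of $\tau$ has finite index in $\varrho_{\mathrm{ss}}(\pi_1(X))$, whose Zariski closure is the semisimple group $G/R$. After one further finite étale cover of $Y$ (absorbed into $\widehat{X}$) I may therefore assume $\tau$ is Zariski dense in a connected semisimple group. Now \cref{main2} applies to $\tau$ and produces a proper Zariski closed $Z \subsetneqq Y$ such that every subvariety of $Y$ not contained in $Z$ is of log general type, giving $\Spalg(Y) \subseteq Z \subsetneqq Y$.

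I now combine the two pieces by setting $g := (\alpha \circ \pi, f) : \widehat{X}' \to A \times Y$, where $\pi : \widehat{X}' \to X$ is the structure map. The composite $p$ with the projection to $Y$ is $f$ itself, which is dominant by construction. It remains to show that $\dim g(\widehat{X}') = \dim \widehat{X}'$. Let $V \subset \widehat{X}'$ be a positive-dimensional irreducible subvariety through a very general point, contained in a fiber of $g$. Since $\alpha$ is constant along $\pi(V)$, one has $\varrho_{\mathrm{ab}}(\pi_1(V^{\mathrm{norm}})) = 1$; since $f|_V$ is constant, the identity $f^\ast \tau = \pi^\ast \varrho_{\mathrm{ss}}$ forces $\varrho_{\mathrm{ss}}(\pi_1(V^{\mathrm{norm}})) = 1$. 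Together these place $\varrho(\pi_1(V^{\mathrm{norm}}))$ inside the finite group $R \cap G_{\mathrm{ss}}$, contradicting the bigness of $\varrho$. The main obstacle I anticipate is the bookkeeping of successive étale covers and birational modifications required to realize the quasi-Albanese factorization, the Kollár-type factorization of \cref{lem:kollar}, and the Zariski-density normalization of $\tau$ simultaneously on a single pair $(\widehat{X}',\widehat{X})$ without disturbing the morphism $\alpha$ inherited from $X$.
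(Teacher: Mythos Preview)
The paper does not prove this lemma; it is stated with a \qed and cited from the companion paper \cite[Lemma~7.1]{CDY22}. Your sketch is the natural strategy and is essentially correct: split the reductive Zariski closure into its central torus and its semisimple quotient, handle the torus part via the quasi-Albanese map, handle the semisimple part via \cref{lem:kollar} and \cref{main2}, and then use bigness of $\varrho$ to force $g$ to be generically finite onto its image.

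Two small points deserve attention. First, your claim that $\varrho_{\mathrm{ab}}$ factors through $H_1(X,\mathbb{Z})/\mathrm{torsion}$ is not literally true, since $(\mathbb{C}^\ast)^k$ contains torsion; but this is harmless. Either pass to one more \'etale cover to kill the torsion of $H_1$, or simply take $\alpha$ to be the full quasi-Albanese map and observe that for $V$ in a fiber of $\alpha\circ\pi$ the image of $\pi_1(V^{\mathrm{norm}})$ in $H_1(X,\mathbb{Z})$ lands in the finite torsion subgroup, so $\varrho_{\mathrm{ab}}$ is \emph{finite} (not necessarily trivial) on it---still enough for the contradiction. Second, in the bigness step you are using that for $V\subset\widehat{X}'$ through a very general point, the image $W:=\pi(V)\subset X$ is a closed positive-dimensional subvariety through a very general point and $\mathrm{Im}[\pi_1(V^{\mathrm{norm}})\to\pi_1(X)]$ has finite index in $\mathrm{Im}[\pi_1(W^{\mathrm{norm}})\to\pi_1(X)]$; this follows from the properness of $\pi$ and Stein factorization of $V^{\mathrm{norm}}\to W^{\mathrm{norm}}$, and is worth making explicit. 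The bookkeeping of successive covers that you flag is genuinely only bookkeeping: each \'etale cover and birational modification can be absorbed into the final pair $(\widehat{X}',\widehat{X})$, and the map $\alpha$ is simply pulled back along $\pi$ throughout.
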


Now we prove \cref{thm:20230510} (i), (ii), (iii), which we restate as follows.

\begin{thm}[= \cref{thm:20230510} (i)-(iii)]\label{thm:structure}
	Let $X$ be a quasi-projective normal variety and let $\varrho:\pi_1(X)\to {\rm GL}_N(\bC)$ be a reductive and big representation.   Then 
	\begin{thmlist}
		\item \label{item:LKD} the logarithmic Kodaira dimension satsifies $\bar{\kappa}(X)\geq 0$.
Moreover, if $ \bar{\kappa}(X)= 0$, then $\pi_1(X)$ is virtually abelian.
		\item \label{item:Mori}There is a proper Zariski closed subset $\Xi$ of $X$ such that each non-constant morphism $\bA^1\to X$ has image in $\Xi$.
		\item \label{item:local trivial}After replacing $X$ by a finite \'etale cover and a birational modification, there are a    semiabelian variety $A$, a   smooth quasi-projective variety $V$  and a birational morphism $a:X\to V$  such that the following commutative diagram holds:
		\begin{equation*} 
			\begin{tikzcd}
				X \arrow[rr,    "a"] \arrow[dr, "j"] & & V \arrow[ld, "h"]\\
				& J(X)&
			\end{tikzcd}
		\end{equation*}
		where $j$ is the logarithmic Iitaka fibration of $X$ and $h:V\to J(X)$ is a  locally trivial fibration with fibers isomorphic to $A$.  Moreover, for a   general fiber $F$ of $j$, $a|_{F}:F\to A$ is proper in codimension one.  
	\end{thmlist}   
\end{thm}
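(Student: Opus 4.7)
The strategy is to reduce all three conclusions to a single application of \cref{thm:202305101}, using \cref{lem:202305101} to supply its input data. First I would apply \cref{lem:202305101} to the reductive big representation $\varrho$: after a finite étale cover $\widehat{X}\to X$ and a birational modification $\widehat{X}'\to \widehat{X}$, this produces a dominant morphism $g:\widehat{X}'\to A\times Y$ with $A$ a semi-abelian variety, $Y$ smooth quasi-projective satisfying $\Spalg(Y)\subsetneq Y$, and with $\dim g(\widehat{X}')=\dim \widehat{X}'$ and $p:=\pi_Y\circ g$ dominant. Taking $V=Y$ itself in the definition of $\Spalg$, the hypothesis $\Spalg(Y)\subsetneq Y$ forces $Y$ to be of log general type, so $\bar{\kappa}(Y)=\dim Y$.

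\textbf{Establishing $\bar{\kappa}(X)\geq 0$.} This is the linchpin, because the remaining parts all depend on the log Iitaka fibration of $X$ being well-defined. Since $\bar{\kappa}$ is invariant under finite étale covers and birational modifications, it suffices to show $\bar{\kappa}(\widehat{X}')\geq 0$. For a very general $y\in Y$, the fiber $F_y:=p^{-1}(y)$ maps generically finitely via $g|_{F_y}$ to a closed subvariety of $A\times\{y\}\cong A$; by \cref{prop:Koddimabb} the image has non-negative log Kodaira dimension, which pulls back under the generically finite map to give $\bar{\kappa}(F_y)\geq 0$. Combining this with $\bar{\kappa}(Y)=\dim Y$ through the logarithmic subadditivity theorem for fibrations over a base of log general type (a log version of Kawamata's $C_{n,m}^+$ theorem, due to Fujino/Maehara) yields $\bar{\kappa}(\widehat{X}')\geq \bar{\kappa}(F_y)+\bar{\kappa}(Y)\geq 0$, and hence $\bar{\kappa}(X)\geq 0$.

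\textbf{Deducing \cref{item:local trivial}, \cref{item:Mori}, and the remainder of \cref{item:LKD}.} With $\bar{\kappa}(X)\geq 0$ established, the log Iitaka fibration $j:X\dashrightarrow J(X)$ is well-defined, and the hypotheses of \cref{thm:202305101} are precisely those produced in the first paragraph. Applying it yields the commutative diagram of \cref{item:local trivial}, with $h:V\to J(X)$ a locally trivial $A$-fibration, $a:X\to V$ birational, and $a|_F:F\to A$ proper in codimension one on general fibers of $j$. For \cref{item:Mori}, any non-constant $\varphi:\mathbb{A}^1\to \widehat{X}'$ has $\pi_A\circ g\circ\varphi$ constant (since $A$ contains no rational curves), so either $p\circ\varphi$ is non-constant, in which case its rational image lies in $\Spalg(Y)$ and therefore $\varphi(\mathbb{A}^1)\subset p^{-1}(\Spalg(Y))$, or $g\circ\varphi$ is constant and $\varphi(\mathbb{A}^1)$ lies in the positive-dimensional locus of $g$; both are proper Zariski closed subsets of $\widehat{X}'$, and their pushforward through the finite étale and birational morphisms to $X$ provides the desired $\Xi$. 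Finally, if $\bar{\kappa}(X)=0$, then $J(X)$ is a point in the diagram of \cref{item:local trivial}, so after a finite étale cover $X$ is birational to the semi-abelian variety $A$; the induced morphism $X\to A$ has maximal image dimension and $\bar{\kappa}(X)=0$, so \cref{lem:abelian pi} yields that the fundamental group of the cover is abelian, making $\pi_1(X)$ virtually abelian.

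\textbf{Main obstacle.} The principal technical hurdle lies in the second paragraph, namely invoking the appropriate log subadditivity of Kodaira dimension over a log general type base; the remaining steps are essentially formal consequences of \cref{lem:202305101}, \cref{thm:202305101}, \cref{prop:Koddimabb}, and \cref{lem:abelian pi}, once the log Iitaka fibration is known to exist.
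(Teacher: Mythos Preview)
Your proposal is correct and follows essentially the same approach as the paper: apply \cref{lem:202305101} to obtain the map $g:\widehat{X}'\to\cA\times Y$, use $\bar{\kappa}(F_y)\geq 0$ together with Fujino's log subadditivity \cite[Theorem~1.9]{Fuj17} to get $\bar{\kappa}(X)\geq 0$, invoke \cref{thm:202305101} for \cref{item:local trivial}, and argue as you do for \cref{item:Mori}. The only noteworthy difference is in the second half of \cref{item:LKD}: the paper observes directly that $\bar{\kappa}(X)=0$ forces $\bar{\kappa}(Y)\leq 0$, hence $\dim Y=0$ (since $\Spalg(Y)\subsetneq Y$), so $g$ itself is a generically finite map to $\cA$ and \cref{lem:abelian pi} applies immediately---whereas you route this through \cref{item:local trivial} by noting $J(X)$ is a point; both arguments land on \cref{lem:abelian pi}, but the paper's is shorter since it avoids invoking the full structure theorem \cref{thm:202305101}.
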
    
\begin{proof}
	To prove the theorem we are free to replace $X$ by a birational modification  and by a finite \'etale cover as the logarithmic Kodaira dimension is invariant under finite \'etale covers. 
	We apply \cref{lem:202305101}.
Then by replacing $X$ with a finite \'etale cover and a birational modification, we obtain a smooth quasi-projective variety $Y$ (might be zero-dimensional), a semiabelian variety $\cA$, and a morphism $g:X\to \cA \times Y$ that satisfy the following properties:
	\begin{itemize}
		\item  $\dim X=\dim g(X)$.
		\item Let $p:X\to Y$ be  the composition of $g$ with the projective map $\cA\times Y\to Y$. 
		Then  $p$   is dominant.
		\item $\Spalg(Y)\subsetneqq Y$. In particular, $Y$ is of log general type if $\dim Y>0$.
	\end{itemize}

	Let us  prove \cref{item:LKD}.
	Let $\alpha:X\to\cA$ be the composite of $g:X\to \cA\times Y$ and the first projection $\cA\times Y\to \cA$.
	Let  $Z$ be a general fiber of $p$. 
	Then $\alpha|_Z:Z\to \cA$ satisfies $\dim Z=\dim \alpha(Z)$. 
	It follows that $\bar{\kappa}(Z)\geq 0$. By\cite[Theorem 1.9]{Fuj17} we obtain $\bar{\kappa}(X)\geq \bar{\kappa}(Y)+\bar{\kappa}(Z)$.  
Hence $\bar{\kappa}(X)\geq 0$.

Suppose $\bar{\kappa}(X)= 0$.
Then $\bar{\kappa}(Y)\leq0$. 
By $\Spalg(Y)\subsetneqq Y$, we conclude that $\dim Y=0$.
Hence $\dim\alpha(X)=\dim X$.
By \cref{lem:abelian pi}, $\pi_1(X)$ is abelian.
The first claim is proved.

	\medspace
	
Let us  prove \cref{item:Mori}.  
Let $E\subsetneqq X$ be a proper Zariski closed set such that $g|_{X\backslash E}:X\backslash E\to \cA\times Y$ is quasi-finite.
Set $\Xi=E\cup p^{-1}(\Spalg(Y))$.
Then $\Xi\subsetneqq X$.

We shall show that every non-constant algebraic morphism $\bA^1\to X$ has its image in $\Xi$.
Indeed, suppose $f:\bA^1\to X$ satisfies $f(\bA^1)\not\subset \Xi$.
Since $\cA$ does not contain $\bA^1$-curve, the composite $\alpha\circ f:\bA^1\to \cA$ is constant.
By $p\circ f(\bA^1)\not\subset \Spalg(Y)$, $p\circ f:\bA^1\to Y$ is constant.
Hence $g\circ f:\bA^1\to \cA\times Y$ is constant.
By $f(\bA^1)\not\subset E$, $f$ is constant.
Thus we have proved that every non-constant algebraic morphism $\bA^1\to X$ has its image in $\Xi$. 

Finally \cref{item:local trivial} follows from \cref{thm:202305101}.
\end{proof}

\begin{rem}
If we assume the logarithmic abundance conjecture: a smooth quasi-projective variety is $\bA^1$-uniruled if and only if $\bar{\kappa}(X)=-\infty$, then it predicts that   $\bar{\kappa}(X)\geq 0$ if there is a big representation $\varrho:\pi_1(X)\to {\rm GL}_N(\bC)$, which is slightly stronger than the first claim in \cref{thm:structure}. Indeed, since $\varrho$ is big, $X$ is not $\bA^1$-uniruled and thus $\bar{\kappa}(X)\geq 0$ by this conjecture.  
\end{rem}

\subsection[characterization of  semi-abelian variety]{A characterization of varieties birational to semi-abelian variety} 
In \cite{Yam10}, the third author established the following theorem: Let $X$ be a smooth projective variety equipped with a big representation $\varrho:\pi_1(X)\to {\rm GL}_N(\bC)$. If $X$ admits a Zariski dense entire curve, then after replacing $X$ by a  finite \'etale cover, its Albanese morphism $\alpha_X:X\to \cA_X$ is birational.

In \cref{thm:20230510}.(iv) we state a similar result  for smooth quasi-projective varieties $X$, provided that $\varrho$ is a reductive representation.   
\begin{proposition}[=\cref{thm:20230510}.(iv)]\label{thm:char}
	Let $Y$ be an $h$-special or   special smooth quasi-projective variety.    If $\varrho:\pi_1(Y)\to {\rm GL}_N(\bC)$ is  a  reductive and  big representation, then there is a finite \'etale cover $X$ of $Y$ such that   the quasi-Albanese morphism $\alpha:X\to \cA$ is  birational  and the induced morphism $\alpha_*:\pi_1(X)\to\pi_1(\cA)$ is an isomorphism.
In particular, $\pi_1(Y)$ is virtually abelian.
\end{proposition}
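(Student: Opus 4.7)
My plan combines \cref{thm:VN} with the $\pi_{1}$-exactness result \cref{pro:202210131}, the dominance-with-connected-fibers of the quasi-Albanese (\cref{prop:factor}), and the bigness of $\varrho$.

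First, I apply \cref{thm:VN}: since $\varrho$ is reductive and $Y$ is $h$-special or special, $\varrho(\pi_{1}(Y))$ is virtually abelian. Replacing $Y$ by a suitable finite \'etale cover $X$---which preserves the $h$-special (resp.\ special) property by \cref{lem:202304061} (resp.\ \cref{special}), and preserves both bigness and reductivity of $\varrho$---I may assume that $\varrho(\pi_{1}(X))$ is abelian and, after killing torsion via one more finite \'etale cover, torsion-free. Then $\varrho$ factors through the torsion-free abelianization $\pi_{1}(X)^{\mathrm{ab}}_{\mathrm{tf}}$; using the Deligne-type identification $\pi_{1}(X)^{\mathrm{ab}}_{\mathrm{tf}} \cong \pi_{1}(\cA)$, coming from the Hodge-theoretic construction of the quasi-Albanese of a smooth quasi-projective variety, this upgrades to a factorisation $\varrho = \bar{\varrho} \circ \alpha_{\ast}$ for some $\bar{\varrho} : \pi_{1}(\cA) \to \mathrm{GL}_{N}(\bC)$, where $\alpha : X \to \cA$ is the quasi-Albanese morphism.

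Next, by \cref{prop:factor}, the morphism $\alpha$ is dominant with connected general fibers, and by \cref{pro:202210131} it is $\pi_{1}$-exact, so that
\[
\pi_{1}(F) \to \pi_{1}(X) \to \pi_{1}(\cA) \to 1
\]
is exact, where $F$ is a generic fiber of $\alpha$. Since $\alpha_{\ast}(\pi_{1}(F)) = \{1\}$ and $\varrho = \bar{\varrho} \circ \alpha_{\ast}$, we have $\varrho(\pi_{1}(F)) = \{1\}$. Now suppose $\dim F > 0$: then $F$ is a positive-dimensional closed subvariety of $X$ passing through a very general point of $X$, so by the bigness of $\varrho$, the image $\varrho(\mathrm{Im}[\pi_{1}(F^{\mathrm{norm}}) \to \pi_{1}(X)])$ must be infinite, a contradiction. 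Hence $\dim F = 0$, and by connectedness $F$ is a single reduced point, so $\alpha$ is birational.

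With $F$ a point, $\pi_{1}(F) = 1$, and the $\pi_{1}$-exact sequence degenerates to $1 \to \pi_{1}(X) \to \pi_{1}(\cA) \to 1$, so $\alpha_{\ast}$ is an isomorphism. Since $\pi_{1}(X)$ has finite index in $\pi_{1}(Y)$ and $\pi_{1}(\cA)$ is abelian, $\pi_{1}(Y)$ is virtually abelian. The main obstacle I foresee is the factorization of $\varrho$ through $\pi_{1}(\cA)$ carried out in the first step: it hinges on the identification $\pi_{1}(X)^{\mathrm{ab}}_{\mathrm{tf}} \cong \pi_{1}(\cA)$ for smooth quasi-projective $X$, which, although standard, requires some care with the mixed Hodge structure on $H_{1}(X)$ and with compatibility across the chain of finite \'etale covers employed. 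Everything else amounts to packaging the $\pi_{1}$-exactness \cref{pro:202210131} with the bigness hypothesis, and is essentially formal once this factorisation is in hand.
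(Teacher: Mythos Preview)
Your proof is correct and follows essentially the same route as the paper's: apply \cref{thm:VN} to make $\varrho(\pi_1(X))$ abelian and torsion-free after a finite \'etale cover, factor $\varrho$ through $\alpha_*$ via the identification $H_1(X,\bZ)/\mathrm{torsion}\simeq \pi_1(\cA)$, use bigness to force the generic fiber of $\alpha$ to be a point, and then invoke $\pi_1$-exactness (\cref{pro:202210131}) to conclude that $\alpha_*$ is an isomorphism. The only cosmetic difference is that the paper observes $\varrho(\mathrm{Im}[\pi_1(F)\to\pi_1(X)])=\{1\}$ directly from the factorisation (without invoking $\pi_1$-exactness at that step), reserving \cref{pro:202210131} solely for the final isomorphism claim.
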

\begin{proof}
 By \cref{prop:factor} $\alpha$ is dominant with connected general fibers.	Since $\varrho$ is reductive, by \cref{main5},  there is a finite \'etale cover $X$ of $Y$   such that  $G:=\varrho(\pi_1(X))$ is  abelian and torsion free. It follows that $\varrho$ factors through  $H_1(X,\bZ)/{\rm torsion}$. 
	Since $\alpha_*:H_1(X,\bZ)/{\rm torsion}\to H_1(\cA,\bZ)$ is isomorphic,  $\varrho$ further factors through 	$H_1(\cA,\bZ)$.
	\begin{equation*}
		\begin{tikzcd}
			&	\pi_1(X) \arrow[d]\arrow[r, "\rho"] \arrow[ldd, bend right=30]& G\\
			&	H_1(X,\bZ)/{\rm torsion} \arrow[ru]\arrow[d, "\alpha_*"']&\\
			\pi_1(\cA)\arrow[r, "="]	&	H_1(\cA,\bZ)\arrow[ruu, bend right=30,, "\beta"']&
		\end{tikzcd}
	\end{equation*}
	From the above diagram for every fiber $F$ of $\alpha$,   $\varrho({\rm Im}[\pi_1(F)\to \pi_1(X)])$ is trivial.   Since $\varrho$ is big,  the general fiber of $\alpha$ is thus a point. Hence $\alpha$ is birational.  Since $\alpha:X\to \cA$ is $\pi_1$-exact by \cref{pro:202210131}, it follows that  $\alpha_*:\pi_1(X)\to \pi_1(\cA)$ is an isomorphism. 
\end{proof}
\begin{rem}\label{rem:sharp abelian} 
We would like to point out that \cref{thm:char} is a sharp result:
	  \begin{itemize}
	 	\item In contrast to the projective case which was proven in \cite{Yam10}, we require additionally $\varrho$ to be reductive for the result to hold.
	 	\item Unlike the situation described in \cref{lem:abelian pi0},  we cannot expect that the quasi-Albanese morphism $\alpha:X\to \cA$ is proper in codimension one.
	 \end{itemize}
 In the next subsection, we will provide examples to illustrate the above points.
\end{rem}

 \subsection{Remarks on Proposition~\ref{thm:char}} 
In this subsection, we will provide examples to demonstrate the facts in \cref{rem:sharp abelian}.
\begin{lem}
	Let $X$ be the 	smooth quasi-projective surface constructed defined in \Cref{example}. Then 
	\begin{itemize}
		\item the variety $X$ is a log Calabi-Yau variety, i.e. there is a smooth projective compactification $\overline{X}$ of  $X$ with $D:=\overline{X}\backslash X$ a simple normal crossing divisor such that $K_{\overline{X}}+D$ is trivial. 
		In particular, $\bar{\kappa}(X)=0$.
		\item The  fundamental group  $\pi_1(X)$ is linear and large, i.e. for any closed  subvariety $Y\subset X$, the image ${\rm Im}[\pi_1(Y^{\rm norm})\to \pi_1(X)]$ is infinite.
		\item The quasi-Albanese morphism of $X$ is a fibration over an elliptic curve $B$ with fibers $\bC^*$.
		\item For any finite \'etale cover $\nu:\widehat{X}\to X$, its quasi-Albanese morphism $\alpha_{\widehat{X}}:\widehat{X}\to \cA_{\widehat{X}}$ is a surjective morphism to an elliptic curve $\cA_{\widehat{X}}$.
\end{itemize}
\end{lem}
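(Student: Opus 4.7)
The plan is to handle the four claims together, leveraging the explicit uniformization $\pi : X = (\bC \times \bC^*)/L \to E$ from \Cref{example} and the compactification suggested by \Cref{re:20230428}. For the first bullet, I will take $\overline{X} := \bP(L \oplus \cO_E)$, a $\bP^1$-bundle over $E$, with boundary $D = Z_0 + Z_\infty$ (zero and infinity sections), and exhibit a nowhere-vanishing section of $\Omega^2_{\overline{X}}(\log D)$, namely $\omega = dz \wedge d\xi/\xi$ on $\bC \times \bC^*$. The $L$-invariance of $\omega$ follows from the identity $d\xi'/\xi' = d\xi/\xi - 2\pi i\, m\, dz$ read off \eqref{eqn:20221005}; its simple log poles along both sections of $D$ make it extend to a section of $\Omega^2_{\overline{X}}(\log D)$. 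This trivializes $K_{\overline{X}} + D$, so $X$ is log Calabi--Yau and $\bar{\kappa}(X) = 0$.

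For the second bullet, linearity is immediate from $G \subset \mathrm{GL}_4(\bZ)$. For largeness, given a positive-dimensional closed subvariety $Y \subset X$, I would split on whether $\pi(Y)$ is a point. If $\pi(Y) = \{\mathrm{pt}\}$, then $Y$ is a fiber $\bC^*$, and the $\pi_1$-exactness of $\pi$ (which applies by \cref{pro:202210131}, $X$ being $h$-special) identifies the image of $\pi_1(\bC^*)$ with $\ker(G \to L) = C \simeq \bZ$, infinite. If $\pi(Y)$ has positive dimension, then $\pi|_Y : Y \to E$ is surjective, so via the Albanese construction the image of $\pi_1(Y^{\mathrm{norm}})$ in $\pi_1(E)$ has finite index, hence is infinite.

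For the third bullet, the commutator identity $[g(l_1, m_1, n_1), g(l_2, m_2, n_2)] = g(m_2 n_1 - m_1 n_2, 0, 0)$ gives $[G, G] = C$ and $G^{\mathrm{ab}} \simeq L \simeq \bZ^2$, torsion-free of rank $2$. By the universal property of the quasi-Albanese, $\pi$ factors as $X \to \cA_X \to E$. With $b_1(X) = 2$, the semi-abelian variety $\cA_X$ has dimension $1$ or $2$; in the latter case it would be $(\bC^*)^2$, but this is excluded since there are no non-constant morphisms from an algebraic torus to an elliptic curve while $\cA_X \to E$ must be dominant. Therefore $\cA_X$ is an elliptic curve, the morphism $\cA_X \to E$ is an isomorphism on $\pi_1$, and so is an isomorphism of elliptic curves up to translation; the quasi-Albanese is identified with $\pi$.

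For the fourth bullet, given $\nu : \widehat{X} \to X$ corresponding to $G' \subset G$ of finite index, I would compute $G'^{\mathrm{ab}}$ the same way. The image $\bar{G'} := G'/(G' \cap C)$ has finite index in $L$, hence $\bar{G'} \simeq \bZ^2$; and applying the commutator formula to lifts of a basis of $\bar{G'}$ shows $[G', G']$ has finite index in the cyclic group $G' \cap C$. Thus $G'^{\mathrm{ab}}$ is a split extension $\bZ^2 \oplus \mathrm{finite}$, so $H_1(\widehat{X}, \bZ)/\mathrm{tors} \simeq \bZ^2$. The composition $\widehat{X} \to E$ factors through an isogeny $\widehat{E} \to E$ corresponding to $\bar{G'} \subset L$, and rerunning the third-bullet argument (identical $\pi_1$-rank analysis, identical exclusion of the toric alternative) yields $\cA_{\widehat{X}} \simeq \widehat{E}$. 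The main obstacle is the careful tracking of abelianizations for $G$ and its finite-index subgroups; the key technical inputs that streamline everything are \cref{pro:202210131} for $\pi_1$-exactness and the absence of morphisms from algebraic tori to elliptic curves.
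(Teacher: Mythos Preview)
Your proof is correct and takes a more hands-on, group-theoretic route than the paper's cohomological one. For the first bullet you produce an explicit nowhere-vanishing logarithmic $2$-form, whereas the paper computes the divisor classes of $K_{\overline{X}}$, $D_1$, $D_2$ on $\bP(L^*\oplus\cO_B)$ via the canonical-bundle formula for projective bundles; both reach $K_{\overline{X}}+D=\cO_{\overline{X}}$. For the third bullet the paper uses the Gysin sequence of the $\bC^*$-bundle $\pi:X\to B$ together with $c_1(L)\neq 0$ (\cref{re:20230428}) to see directly that $\pi^*:H^1(B,\bZ)\to H^1(X,\bZ)$ is an isomorphism, which immediately gives $\cA_X\simeq\cA_B\simeq B$; your computation of $G^{\rm ab}$ via the commutator formula and subsequent case analysis on one- and two-dimensional semi-abelian varieties with $b_1=2$ reaches the same conclusion more explicitly. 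For the fourth bullet the paper simply observes $\dim_\bC H^1(\widehat{X},\bC)=\dim_\bC H^1(X,\bC)=2$ and reruns the exclusion of $(\bC^*)^2$, while you compute $(G')^{\rm ab}$ by hand; your argument has the merit of pinning down $\cA_{\widehat{X}}$ concretely as the isogeny cover $\widehat{E}\to E$ corresponding to $\bar{G'}\subset L$. The one place your argument is heavier than needed is the second bullet: since $\pi:X\to E$ is a genuine locally trivial $\bC^*$-bundle, the homotopy long exact sequence $0=\pi_2(E)\to\pi_1(\bC^*)\to\pi_1(X)\to\pi_1(E)$ already gives the injectivity of $\pi_1(\bC^*)\to\pi_1(X)$, so appealing to \cref{pro:202210131} is unnecessary (this is what the paper does).
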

\begin{proof}
	By the construction of $X$ in \Cref{example}, there exists a holomorphic line bundle $L$ over an elliptic curve $B$  such that $X=L\backslash D_1$ where $D_1$ is the zero section of $L$.  Denote by $\overline{X}:=\bP(L^*\oplus \cO_B)$ which is a smooth projective surface and write $\xi:=\cO_{\overline{X}}(1)$ for the tautological line bundle. Denote by  $\overline{\pi}:\overline{X}\to B$   the projection map.  Then   $\cO_{\overline{X}}(D_1)=\pi^*L+ \xi$.   Denote by $D_2:=\overline{X}\backslash L$. Then   $\cO_{\overline{X}}(D_2)=\xi$.   Note that $K_{\overline{X}}=-2\xi+\overline{\pi}^*(K_B+\det (L^*\oplus \cO_B))$. 	It follows that $K_{\overline{X}}+D_1+D_2=\cO_{\overline{X}}$. The first claim follows.

    By the Gysin sequence, we have 
    \begin{align}\label{eq:Gysin}
    	0\to H^1(B,\bZ)\stackrel{\pi^*}{\to} H^1(X,\bZ)\to H^0(B,\bZ)\stackrel{\cdot c_1(L)}{\to} H^2(B,\bZ)\to H^2(X,\bZ)\to H^1(B,\bZ)\to\cdots
    \end{align}   
    where $\pi:X\to B$ is the projection map.    By the functoriality of the quasi-Albanese morphism, we have the following diagram
\begin{equation*}
	\begin{tikzcd}
		X \arrow[r,"\pi"]\arrow[d,"\alpha_X"] & B\arrow[d,"\alpha_B","\simeq"']\\
		\cA_X\arrow[r,"\simeq"] & \cA_{B}
	\end{tikzcd}
\end{equation*}
where $\alpha_X$ and $\alpha_B$ are (quasi-)Albanese morphisms of $X$ and $B$ respectively.     By \Cref{re:20230428}, we know that $c_1(L)\neq 0$.
It follows from \eqref{eq:Gysin} that $\pi^*:H^1(B,\bZ)\to H^1(X,\bZ)$ is an isomorphism, and thus $\cA_X\to \cA_B$ is an isomorphism.   Since $B$ is an elliptic curve, $\alpha_B:B\to \cA_B$ is also an isomorphism. It proves that $\pi$ coincides with the quasi-Albanese morphism   $\alpha_X$. 

We will prove that $\pi_1(X)$ is large. Since $\pi_1(X)$ is infinite, it suffices to check all irreducible curves $Y$ of $X$.    If $Y$ is a fiber of $\pi$, then $Y\simeq \bC^*$ and the claim follows from the exact sequence
$$
0=\pi_2(B)\to \pi_1(\bC^*)\to \pi_1(X)\to \pi_1(B).
$$   
If $Y$ is not a fiber of $\pi$, then $\pi|_{Y}:Y\to B$ is a  finite morphism, and thus   ${\rm Im}[\pi_1(Y^{\rm norm})\to \pi_1(B)]$ is a finite index subgroup of $\pi_1(B)$ which is thus infinite. It follows that   ${\rm Im}[\pi_1(Y^{\rm norm})\to \pi_1(X)]$ is infinite. 

Let us prove the last assertion. It is important to note that $\nu^*: H^1(X, \mathbb{C}) \to H^1(\widehat{X}, \mathbb{C})$ is   injective with a finite cokernel. From this, we deduce that $\dim_\bC H^1(\cA_{\widehat{X}},\bC)=\dim_\bC H^1(\widehat{X},\bC)=\dim_\bC H^1({X},\bC)=2$.  Therefore, $ \cA_{\widehat{X}}$ is   either an elliptic curve or $(\bC^*)^2$.  However, it is worth noting that $\nu$ induces a non-constant morphism $\cA_{\widehat{X}} \to \cA_X=B$. This implies that $\cA_{\widehat{X}}$ cannot be $(\mathbb{C}^*)^2$. To see this, consider the algebraic morphism from $\mathbb{C}^*$ to an elliptic curve $B$, which can be extended to $\mathbb{P}^1 = \mathbb{C}^* \cup \{0,\infty\}$. This extension must be constant, ruling out the possibility of $\cA_{\widehat{X}}$ being $(\mathbb{C}^*)^2$.
\end{proof}
The above lemma shows that \cref{thm:char} does not hold if $\varrho$ is not reductive.

\begin{lem} \label{lem:20230507}
	Let $X$ be the quasi-projective surface constructed in \Cref{example:20221105}, which is special and $h$-special.   Then
	\begin{itemize}
	 \item for the quasi-Albanese morphism $\alpha:X\to \cA_X$,   $\alpha_*:\pi_1(X)\to \pi_1(\cA_X)$ is an isomorphism.  
\item $\pi_1(X)$ is linear reductive and  large.
	\item For any finite \'etale cover $\nu:\widehat{X}\to X$, its quasi-Albanese morphism $\alpha_{\widehat{X}}:\widehat{X}\to \cA_{\widehat{X}}$ is birational but not proper in codimension one. 
	\end{itemize} 
\end{lem}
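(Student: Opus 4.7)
The strategy is to leverage the birational identification of $X$ with $A := C_1 \times C_2$ furnished by the blow-down $\pi : \widetilde{A} \to A$, and to transfer $\pi_1$-information from $A$ to $X$ using the specialness of $X$ together with the $\pi_1$-exactness of the quasi-Albanese morphism.

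First, I would pin down $\cA_X$. The log cotangent exact sequence
\begin{equation*}
0 \to \Omega^1_{\widetilde{A}} \to \Omega^1_{\widetilde{A}}(\log D) \to \mathcal{O}_D \to 0
\end{equation*}
yields, after taking $H^0$, a connecting morphism $H^0(\mathcal{O}_D) \simeq \bC \to H^1(\widetilde{A}, \Omega^1_{\widetilde{A}})$ sending $1$ to $c_1(\mathcal{O}(D))$. Since $D$ is the proper transform of a smooth divisor passing transversally through the blown-up point, one computes $D^2 = -1$, so $c_1(\mathcal{O}(D)) \neq 0$ in $H^{1,1}(\widetilde{A})$ and the connecting morphism is injective. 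Hence $h^0(\widetilde{A}, \Omega^1(\log D)) = h^0(\widetilde{A}, \Omega^1) = 2$, forcing $\cA_X = \mathrm{Alb}(\widetilde{A}) = A$, with $\alpha$ identified as the restriction of the blow-down $\pi|_X$. In particular $\alpha$ is birational. Next, since $X$ is special, \cref{pro:202210131} yields the exact sequence $\pi_1(F) \to \pi_1(X) \to \pi_1(\cA_X) \to 1$ for $F$ a general fiber of $\alpha$; birationality collapses $F$ to a point, so $\alpha_\ast$ is an isomorphism and $\pi_1(X) \simeq \bZ^4$. This group embeds Zariski-densely into the torus $(\bC^\ast)^4$, yielding the linear reductivity. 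For largeness, any positive-dimensional subvariety $Y \subset X$ not contained in the exceptional locus of the birational map $\alpha$ has positive-dimensional image $\alpha(Y) \subset A$, whose $\pi_1$-image in $\pi_1(A) = \pi_1(X)$ is infinite by the classical fact for subvarieties of abelian varieties.

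For the third bullet, a finite étale cover $\nu : \widehat{X} \to X$ corresponds via $\alpha_\ast$ to an isogeny $f : A' \to A$, and functoriality of the quasi-Albanese gives $\cA_{\widehat{X}} = A'$. Base-changing the blow-up along the étale morphism $A' \to A$ identifies $\widetilde{A'} := \widetilde{A} \times_A A'$ with the blow-up of $A'$ at the finite set $f^{-1}((0,0))$, and $\widehat{X}$ with $\widetilde{A'} \setminus D'$, where $D'$ is the proper transform of $f^{-1}(\{0\} \times C_2)$ in $\widetilde{A'}$. Hence $\alpha_{\widehat{X}}$ factors as the inclusion $\widehat{X} \hookrightarrow \widetilde{A'}$ followed by the blow-down $\widetilde{A'} \to A'$ and is birational. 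Its image, however, misses the positive-dimensional (hence codimension one) subset $f^{-1}(\{0\} \times C_2) \setminus f^{-1}((0,0)) \subset A'$: any open subset $V \subset A'$ containing a point of this subset fails the properness of $\alpha_{\widehat{X}}$, since a sequence in $\alpha_{\widehat{X}}^{-1}(V)$ whose images converge to such a point must escape towards $D'$ in $\widehat{X}$. Therefore $\alpha_{\widehat{X}}$ cannot be proper over any open subset of $A'$ whose complement has codimension at least two.

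The hardest step is the Chern class computation pinning down $\cA_X = A$, together with the careful tracking of proper transforms under étale base change to describe $\widehat{X}$ explicitly; once these are in place, the result follows from \cref{pro:202210131} combined with standard facts on abelian varieties.
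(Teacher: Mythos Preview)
Your argument is essentially correct and runs parallel to the paper's, but the route to the first bullet is genuinely different and worth noting. The paper deduces that $\alpha_*$ is an isomorphism by invoking \cref{thm:char}, which in turn requires a big reductive representation as input (so the second bullet logically precedes the first). You instead compute $\cA_X$ directly: the residue sequence together with $D^2=-1$ shows $h^0(\widetilde{A},\Omega^1(\log D))=h^0(\widetilde{A},\Omega^1)=2$, whence $\cA_X=\mathrm{Alb}(\widetilde A)=A$ and $\alpha$ is the blow-down restricted to $X$; then $\pi_1$-exactness from \cref{pro:202210131} finishes. This is more self-contained and avoids appealing to the structural theorem. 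For the third bullet both proofs base-change along the isogeny $A'\to A$; the paper observes abstractly that $\widehat X = X\times_A A'$, while you unpack this as the blow-up of $A'$ at $f^{-1}((0,0))$ minus the proper transform of $f^{-1}(\{0\}\times C_2)$, which makes the failure of properness in codimension one completely explicit.

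One gap deserves mention: in your largeness argument you restrict to subvarieties $Y$ \emph{not contained in the exceptional locus} of $\alpha$ and do not treat the remaining case. The exceptional curve $E\cap X$ is isomorphic to $\bA^1$ (since $D$ meets $E$ transversally in a single point), hence has trivial fundamental group, so the image $\mathrm{Im}[\pi_1((E\cap X)^{\mathrm{norm}})\to\pi_1(X)]$ is trivial. Thus ``large'' in the strict sense of the preceding lemma actually fails here; what holds, and what is needed for the application in \cref{rem:sharp abelian}, is that the abelian representation $\pi_1(X)\simeq\bZ^4\hookrightarrow(\bC^*)^4$ is \emph{big}, i.e.\ has infinite image on subvarieties through a very general point. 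Your argument establishes exactly this. (The paper's own proof has the same issue: the claim that some $q_i|_Y$ is dominant fails for $Y=E\cap X$.)
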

\begin{proof}
	We will use the notations in \Cref{example:20221105}.  The first statement follows from \Cref{thm:char}. 
 
 	We aim to show that $\pi_1(X)$ is large. Since $\cA_X$ is positive-dimensional by the construction of $X$, the first statement implies that $\pi_1(X)$ is infinite. Thus, it suffices to check all irreducible curves $Y$ of $X$. Consider the projection maps $q_1:X\to C_1$ and $q_2:X\to C_2$ where $C_1$ and $C_2$ are two elliptic curves constructed in \Cref{example:20221105}. Since $X\to C_1\times C_2$ is birational,  $q_i|_{Y}:Y\to C_i$ is dominant for some $i=1,2$. Therefore, for some $i=1,2$, ${\rm Im}[\pi_1(Y^{\rm norm})\to \pi_1(C_i)]$ is a finite index subgroup of $\pi_1(C_i)$ which is thus infinite. It follows that ${\rm Im}[\pi_1(Y^{\rm norm})\to \pi_1(X)]$ is infinite. Since $\pi_1(\cA_X)$ is an abelian group, it follows that $\pi_1(X)$ is linear reductive and large.

 Since $X$ is special and $h$-special, we know from \cref{thm:char} that its quasi-Albanese morphism $\alpha:X\to \cA_X$ is birational. Moreover, by the construction of $X$ in \Cref{example:20221105}, we note that there is a birational morphism $g:X\to A$ where $A=C_1\times C_2$ is an abelian variety. Therefore, $g$ coincides with $\alpha$.      We further observe that $g(X)\subset (C_1-{0})\times C_2\cup {(0,0)}$. As a result, it is not proper in codimension one.
 
 Let us prove the last assertion. Since $\pi_1(\widehat{X})$ is a finite index subgroup of $\pi_1(\cA_X)$ and $\alpha_*:\pi_1(X)\to \pi_1(\cA_X)$ is an isomorphism, we can consider a finite étale cover $ \widehat{\cA} \to \cA_X$ associated with the finite index subgroup  $\alpha_*(\pi_1(\widehat{X}))$   of $\pi_1(\cA_X)$. Then  $\widehat{\cA}$ is also an abelian variety and there exists  a morphism $f:\widehat{X}\to \widehat{\cA}$ satisfying the following commutative diagram 
 \begin{equation*}
 	\begin{tikzcd}
 		\widehat{X} \arrow[r,"\pi"]\arrow[d,"f"] & X\arrow[d,"\alpha"]\\
 		\widehat{\cA}\arrow[r] & \cA_{X}
 	\end{tikzcd}
 \end{equation*}
 As $\alpha$ is birational, we have $\widehat{X}=X\times_{\cA_X}\widehat{\cA}$ which implies that $f$ is birational but not proper is codimension one as $\alpha$ is not proper is codimension one.   Furthermore, since $f_*: \pi_1(\widehat{X}) \to \pi_1(\widehat{\cA})$ is an isomorphism, we conclude that  $f$ coincides with the quasi-Albanese morphism $\alpha_{\widehat{X}}:\widetilde{X}\to \cA_{\widehat{X}}$.   
This completes the proof of the last claim.
\end{proof}
The above lemma shows that in \cref{thm:char} we cannot expect that the quasi-Albanese morphism is   proper in codimension one.

% \bibliography{biblio}

\begin{thebibliography}{BCJW24}
 	
 	\bibitem[AAC25]{AC25}
 	{\scshape R.~Aguilar~Aguilar {\normalfont \smfandname} F.~Campana} -- {\og The
 		nilpotent quotients of normal quasi-projective varieties with proper
 		quasi-{A}lbanese map\fg}, \emph{Pure Appl. Math. Q.} \textbf{21} (2025),
 	no.~3, p.~911--929, \url{https://doi.org/10.4310/pamq.250115040135}.
 	
 	\bibitem[AN99]{AN99}
 	{\scshape D.~Arapura {\normalfont \smfandname} M.~Nori} -- {\og Solvable
 		fundamental groups of algebraic varieties and {K{\"a}hler} manifolds\fg},
 	\emph{Compos. Math.} \textbf{116} (1999), no.~2, p.~173--188 (English),
 	\url{https://doi.org/10.1023/A:1000879906578}.
 	
 	\bibitem[And92]{And92}
 	{\scshape Y.~Andr{\'e}} -- {\og Mumford-{Tate} groups of mixed {Hodge}
 		structures and the theorem of the fixed part\fg}, \emph{Compos. Math.}
 	\textbf{82} (1992), no.~1, p.~1--24 (English).
 	
 	\bibitem[BCJW24]{BCJW}
 	{\scshape F.~{Bartsch}, F.~{Campana}, A.~{Javanpeykar} {\normalfont
 			\smfandname} O.~{Wittenberg}} -- {\og {The Weakly Special Conjecture
 			contradicts orbifold Mordell, and thus abc}\fg}, \emph{arXiv e-prints}
 	(2024), p.~arXiv:2410.06643, \url{https://doi.org/10.48550/arXiv.2410.06643}.
 	
 	\bibitem[BEZ14]{BZ14}
 	{\scshape P.~Brosnan {\normalfont \smfandname} F.~El~Zein} -- {\og Variations
 		of mixed {Hodge} structure\fg}, in \emph{Hodge theory. Based on lectures
 		delivered at the summer school on Hodge theory and related topics, Trieste,
 		Italy, June 14 -- July 2, 2010}, Princeton, NJ: Princeton University Press,
 	2014, p.~333--409 (English).
 	
 	\bibitem[BJL24]{BJL24}
 	{\scshape F.~{Bartsch}, A.~{Javanpeykar} {\normalfont \smfandname} A.~{Levin}}
 	-- {\og {Symmetric products and puncturing Campana-special varieties}\fg},
 	\emph{arXiv e-prints} (2024), p.~arXiv:2412.14931,
 	\url{https://doi.org/10.48550/arXiv.2412.14931}.
 	
 	\bibitem[BJR23]{BJR}
 	{\scshape F.~{Bartsch}, A.~{Javanpeykar} {\normalfont \smfandname}
 		E.~{Rousseau}} -- {\og {Weakly-special threefolds and non-density of rational
 			points}\fg}, \emph{arXiv e-prints} (2023), p.~arXiv:2310.09065,
 	\url{https://doi.org/10.48550/arXiv.2310.09065}.
 	
 	\bibitem[Cam98]{Cam98}
 	{\scshape F.~Campana} -- {\og Negativity of compact curves in infinite covers
 		of projective surfaces\fg}, \emph{J. Algebr. Geom.} \textbf{7} (1998), no.~4,
 	p.~673--693 (English).
 	
 	\bibitem[Cam01]{Cam01}
 	\bysame , {\og Green-{Lazarsfeld} sets and resolvable quotients of {K{\"a}hler}
 		groups\fg}, \emph{J. Algebr. Geom.} \textbf{10} (2001), no.~4, p.~599--622
 	(French).
 	
 	\bibitem[Cam04]{Cam04}
 	\bysame , {\og Orbifolds, special varieties and classification theory.\fg},
 	\emph{Ann. Inst. Fourier} \textbf{54} (2004), no.~3, p.~499--630 (English),
 	\url{https://doi.org/10.5802/aif.2027}.
 	
 	\bibitem[Cam11a]{Cam11b}
 	\bysame , {\og Special orbifolds and birational classification: a survey\fg},
 	in \emph{Classification of algebraic varieties. Based on the conference on
 		classification of varieties, Schiermonnikoog, Netherlands, May 2009.},
 	Z{\"u}rich: European Mathematical Society (EMS), 2011, p.~123--170 (English).
 	
 	\bibitem[Cam11b]{Cam11}
 	{\scshape F.~Campana} -- {\og Orbifoldes g\'{e}om\'{e}triques sp\'{e}ciales et
 		classification bim\'{e}romorphe des vari\'{e}t\'{e}s k\"{a}hl\'{e}riennes
 		compactes\fg}, \emph{J. Inst. Math. Jussieu} \textbf{10} (2011), no.~4,
 	p.~809--934, \url{https://doi.org/10.1017/S1474748010000101}.
 	
 	\bibitem[CDY22]{CDY22original}
 	{\scshape B.~{Cadorel}, Y.~{Deng} {\normalfont \smfandname} K.~{Yamanoi}} --
 	{\og {Hyperbolicity and fundamental groups of complex quasi-projective
 			varieties}\fg}, \emph{arXiv e-prints: arXiv:2212.12225} (2022),
 	\url{https://doi.org/10.48550/arXiv.2212.12225}.
 	
 	\bibitem[CDY25a]{CDY25}
 	\bysame , {\og {Hyperbolicity and fundamental groups of complex
 			quasi-projective varieties (I): Maximal quasi-Albanese dimension by
 			Nevanlinna theory}\fg}, \emph{arXiv e-prints} (2025), p.~arXiv:2511.04405,
 	\url{https://doi.org/10.48550/arXiv.2511.04405}.
 	
 	\bibitem[CDY25b]{CDY22}
 	\bysame , {\og {Hyperbolicity and fundamental groups of complex
 			quasi-projective varieties (II): via non-abelian Hodge theories}\fg},
 	\emph{arXiv e-prints} (2025), p.~arXiv:2512.15367,
 	\url{https://arxiv.org/abs/2512.15367}.
 	
 	\bibitem[CW23]{CW16}
 	{\scshape F.~Campana {\normalfont \smfandname} J.~Winkelmann} -- {\og Dense
 		entire curves in rationally connected manifolds\fg}, \emph{Algebr. Geom.}
 	\textbf{10} (2023), no.~5, p.~521--553, With an appendix by J\'anos Koll\'ar,
 	\url{https://doi.org/10.14231/ag-2023-018}.
 	
 	\bibitem[Del71]{Del71}
 	{\scshape P.~Deligne} -- {\og Th{\'e}orie de {Hodge}. {II}. ({Hodge} theory.
 		{II})\fg}, \emph{Publ. Math., Inst. Hautes {\'E}tud. Sci.} \textbf{40}
 	(1971), p.~5--57 (French), \url{https://doi.org/10.1007/BF02684692}.
 	
 	\bibitem[Del10]{Del10}
 	{\scshape T.~Delzant} -- {\og The {Bieri}-{Neumann}-{Strebel} invariant of
 		fundamental groups of {K{\"a}hler} manifolds\fg}, \emph{Math. Ann.}
 	\textbf{348} (2010), no.~1, p.~119--125 (French),
 	\url{https://doi.org/10.1007/s00208-009-0468-8}.
 	
 	\bibitem[Fuj17]{Fuj17}
 	{\scshape O.~Fujino} -- {\og Notes on the weak positivity theorems\fg}, in
 	\emph{Algebraic varieties and automorphism groups. Proceedings of the
 		workshop held at RIMS, Kyoto University, Kyoto, Japan, July 7--11, 2014},
 	Tokyo: Mathematical Society of Japan (MSJ), 2017, p.~73--118 (English).
 	
 	\bibitem[JR22]{JR22}
 	{\scshape A.~Javanpeykar {\normalfont \smfandname} E.~Rousseau} -- {\og
 		Albanese maps and fundamental groups of varieties with many rational points
 		over function fields\fg}, \emph{Int. Math. Res. Not.} \textbf{2022} (2022),
 	no.~24, p.~19354--19398 (English),
 	\url{https://doi.org/10.1093/imrn/rnab255}.
 	
 	\bibitem[Kaw81]{Kaw81}
 	{\scshape Y.~Kawamata} -- {\og Characterization of abelian varieties\fg},
 	\emph{Compos. Math.} \textbf{43} (1981), p.~253--276 (English).
 	
 	\bibitem[Kol95]{Kol95}
 	{\scshape J.~Koll{\'a}r} -- \emph{Shafarevich maps and automorphic forms},
 	Princeton University Press, Princeton (N.J.), 1995 (eng).
 	
 	\bibitem[Kol07]{Kol07}
 	{\scshape J.~Koll\'{a}r} -- \emph{Lectures on resolution of singularities},
 	Annals of Mathematics Studies, vol. 166, Princeton University Press,
 	Princeton, NJ, 2007.
 	
 	\bibitem[NW14]{NW13}
 	{\scshape J.~Noguchi {\normalfont \smfandname} J.~Winkelmann} --
 	\emph{Nevanlinna theory in several complex variables and diophantine
 		approximation}, Grundlehren Math. Wiss., vol. 350, Tokyo: Springer, 2014
 	(English), \url{https://doi.org/10.1007/978-4-431-54571-2}.
 	
 	\bibitem[NWY13]{NWY13}
 	{\scshape J.~Noguchi, J.~Winkelmann {\normalfont \smfandname} K.~Yamanoi} --
 	{\og Degeneracy of holomorphic curves into algebraic varieties. {II}\fg},
 	\emph{Vietnam J. Math.} \textbf{41} (2013), no.~4, p.~519--525 (English),
 	\url{https://doi.org/10.1007/s10013-013-0051-1}.
 	
 	\bibitem[{Sta}22]{stacks-project}
 	{\scshape T.~{Stacks project authors}} -- {\og The stacks project\fg},
 	\url{https://stacks.math.columbia.edu}, 2022.
 	
 	\bibitem[Yam10]{Yam10}
 	{\scshape K.~Yamanoi} -- {\og On fundamental groups of algebraic varieties and
 		value distribution theory\fg}, \emph{Ann. Inst. Fourier} \textbf{60} (2010),
 	no.~2, p.~551--563 (English), \url{https://doi.org/10.5802/aif.2532}.
 	
 \end{thebibliography}
% \bibliographystyle{smfalpha-url}
   
 \providecommand{\bysame}{\leavevmode ---\ }
 \providecommand{\og}{``}
 \providecommand{\fg}{''}
 \providecommand{\smfandname}{\&}
 \providecommand{\smfedsname}{\'eds.}
 \providecommand{\smfedname}{\'ed.}
 \providecommand{\smfmastersthesisname}{M\'emoire}
 \providecommand{\smfphdthesisname}{Th\`ese}

\end{document}